\documentclass[12pt]{article}

\usepackage{amsmath}

\usepackage{geometry}
\numberwithin{equation}{section}
\usepackage{amsfonts}
\usepackage{amsthm}
\usepackage{amssymb}
\usepackage{bbm}
\usepackage{centernot}
\usepackage[x11names]{xcolor}
\usepackage{chemfig}
\usepackage{cite}
\usepackage[nodayofweek]{datetime} 
\usepackage{dsfont}
\usepackage{enumitem}
\usepackage{euscript}
\usepackage{faktor}
\usepackage{float}

\usepackage{graphicx}
\usepackage{mathrsfs}
\usepackage{mathtools}
\usepackage{polynom}
\usepackage{stmaryrd}
\usepackage{tikz}
\usepackage{tikz-cd}
\usepackage{wasysym}
\usepackage{xfrac}
\usepackage[x11names]{xcolor}
\usepackage{mathtools}
\usepackage{setspace}
\usepackage[all, cmtip]{xy}
\usepackage{comment}
\usepackage{pgfplots}
\pgfplotsset{compat=1.15}
\usepackage{mathrsfs}
\usetikzlibrary{arrows}
\usepackage{thmtools}
\usepackage{xcolor}
\usepackage{hyperref}

\usepackage[T1]{fontenc}
\usepackage{esint}

\newlist{legal}{enumerate}{10}
\setlist[legal]{label*=\arabic*.}

\newif\ifproofread

\DeclarePairedDelimiter\abs{\lvert}{\rvert}

\newcommand\norm[1]{\left\lVert#1\right\rVert}

\makeatletter
\newcommand*\bigcdot{\mathpalette\bigcdot@{.5}}
\newcommand*\bigcdot@[2]{\mathbin{\vcenter{\hbox{\scalebox{#2}{$\m@th#1\bullet$}}}}}
\makeatother

\usepackage{mathtools}

\DeclarePairedDelimiter\floor{\lfloor}{\rfloor}

\newtheorem*{Lemma*}{Lemma}
\newtheorem*{Corollary*}{corollary}
\newtheorem*{theorem*}{Theorem}
\newtheorem*{definition*}{Definition}

\newtheorem{theorem}{Theorem}[section]
\newtheorem{nota}[theorem]{Notation}
\newtheorem{definition}[theorem]{Definition}

\newtheorem{remark}[theorem]{Remark}
\newtheorem{lemma}[theorem]{Lemma}

\newtheorem{proposition}[theorem]{Proposition}
\newtheorem{corollary}[theorem]{Corollary}
\newtheorem{claim}[theorem]{Claim}

  \newtheorem{theoremalpha}{Theorem}
    
\declaretheoremstyle[
  headfont=\normalfont\bfseries,
  numbered=unless unique,
  bodyfont=\normalfont,
  spaceabove=1em plus 0.75em minus 0.25em,
  spacebelow=1em plus 0.75em minus 0.25em,
]{hartending}

\newcommand{\thistheoremname}{}
\newtheorem*{genericthm}{\thistheoremname}

\newcommand{\NN}{\mathbb{N}}

\newcommand{\RR}{\mathbb{R}}

\newcommand{\eps}{\epsilon}

\newcommand{\subd}{\on{Sub}_d(G)}

\newcommand{\Ii}{\mathcal{I}}

\newcommand{\Mm}{\mathcal{M}}

\newcommand{\Qq}{\mathcal{Q}}

\newcommand{\stab}{\operatorname{stab}}

\newcommand{\Ad}{\operatorname{Ad}}
\newcommand{\bd}{{\rm d}}

\newcommand{\on}[1]{\operatorname{#1}}

\newcommand{\p}{\mathfrak{p}}
\newcommand{\uphor}{\mathfrak{u}}
\newcommand{\pminus}{\mathfrak{p}^{-}}
\newcommand{\Lie}{\operatorname{Lie}}
\newcommand{\supp}{\operatorname{supp}}
\newcommand{\E}{\mathbb{E}}

\newcommand{\dX}{d_X}
\newcommand{\Grass}{\operatorname{Gr}}

\newcommand{\R}{\mathbb{R}}

\newcommand{\g}{\mathfrak{g}}
\newcommand{\aCartan}{\mathfrak{a}}

\newcommand{\pid}{\mathrel{\ooalign{$\lneq$\cr\raise.22ex\hbox{$\lhd$}\cr}}}

\AtBeginDocument{}

\DeclareMathOperator{\Stab}{Stab}

\newcommand{\yynote}[1]{\marginpar{\color{cyan}\tiny [YY] #1}}

\title{Applications of Almost Stationarity I: Quantitative Growth of Injectivity Radius and St\"{u}ck-Zimmer Theorem}
\date{}
\author{
  Ilya Gekhtman, Simon Machado, Omri Solan and Yuval Yifrach
}
\begin{document}
\maketitle

\begin{abstract}
    Fraczyk and Gelander proved in \cite{FG} that for any simple Lie group $G$ of high rank and for every non-lattice discrete subgroup $\Gamma\leq G$, the injectivity radius of points in $G/\Gamma$ is unbounded, resolving a conjecture of Margulis.
    In this work we obtain an explicit lower bound on the growth rate of the maximal injectivity radius of points taken from growing balls in $G/\Gamma$.
    More explicitly, we prove that for any $R>0$, one can embed a ball of radius $c\log^{(4)}R$ in $G/\Gamma$ centered at some point $[g]\in G/\Gamma$ where $g$ is taken from $G_R$ and for some constant $c=c(G,\Gamma)$.
    
    In particular, we show that for a general discrete subgroup $\Gamma$, if the injectivity radius growth in $G/\Gamma$ is slower than $\log^{(4)}$, $\Gamma$ must be a lattice.

    Additionally, we give a new, shorter and simpler proof of the Nevo-St\"{u}ck-Zimmer Theorem, saying that every action of a high rank simple group with property $(T)$ is either essentially free or essentially transitive. 

    The results in this paper are obtained using the almost structure of measures from the accompanying paper, together with additional geometric considerations.
    
    As a step in the proof, we develop the following characterization for lattices.
    A discrete subgroup $\Gamma\leq G$ is a lattice if and only if there is a probability measure on $G/\Gamma$ which is sufficiently almost invariant under $G$.
    More precisely, suppose $\Gamma\leq G$ is a discrete subgroup for which there exists a probability measure $\nu$ on $G/\Gamma$ for which $W_1^{b}(g\nu,\nu)\leq \eps_0$ for some $\eps_0(\Gamma)>0$, then $\Gamma$ is a lattice.
\end{abstract}
\section{Introduction}

\subsection{Rigidity without limits}

A large part of the rigidity theory of higher-rank Lie groups is proved by
one and the same scheme. From a geometric or algebraic situation one
manufactures a family of probability measures on an auxiliary space (such as the Chabauty space $\subd$ of discrete subgroups of $G$) one passes
to a weak limit to obtain a measure which is invariant, or at least
stationary, and one then appeals to a structure theorem to identify that
limit. The structure theorems of Nevo--Zimmer \cite{NZ} and of
St\"uck--Zimmer \cite{Stuck-Zimmer} are the two workhorses, and the scheme is
responsible for some of the most striking results in the field: the
convergence of normalised Betti numbers along sequences of lattices
\cite{7s17,ABBG}, the growth of torsion homology \cite{ABFG,Fra22}, and the
theorem of Fraczyk and Gelander \cite{FG} that a locally symmetric space of
higher rank either has finite volume or has points of arbitrarily large
injectivity radius.

The scheme is also irreversibly lossy. The limit measure records only what
survives at infinity; everything about \emph{how fast} it was approached is
destroyed in the passage to the limit. Consequently these theorems are, and
by this method can only be, purely qualitative.

Making rigidity effective has been a central theme in homogeneous dynamics
over the last decade: effective density and equidistribution for unipotent
flows \cite{LM14,LMW,Yang,LMWY,LMWY-Oppenheim}, an effective closing lemma
\cite{LMMSW}, effective equidistribution of closed semisimple orbits
\cite{EMV}, and, for random walks, the effective results of
Bourgain--Furman--Lindenstrauss--Mozes \cite{BFLM} on the torus and of
B\'enard--He \cite{BH24,BH25} on homogeneous spaces. It is worth recording
that in \cite{BH25} the effective statement is not merely a strengthening:
the argument is independent of Benoist--Quint and yields, as a by-product, a
new proof of their stationary measure rigidity theorem in the simple case.
The structure theory of stationary measures for actions of higher-rank
groups --- the Nevo--Zimmer and St\"uck--Zimmer side of the subject --- has
so far stayed outside this development.

This paper is the second in a series whose purpose is to bring it inside, by
removing the limit from the scheme. The companion paper \cite{QNZ} develops
a structure theory for measures which are only \emph{almost} stationary ---
measures $\nu$ with $\norm{\mu*\nu-\nu}_{TV}\leq\eps$ --- and proves that
the Nevo--Zimmer dichotomy persists for them, with explicit rates in $\eps$.
Such measures are abundant where genuinely stationary ones are not: the
Ces\`aro averages
\begin{equation}\label{eq: cesaro intro}
    \nu_n=\frac{1}{n}\sum_{i=0}^{n-1}\mu^{*i}*\delta_x
\end{equation}
satisfy $\norm{\mu*\nu_n-\nu_n}_{TV}\leq 2/n$ for \emph{every} $n$ and every
$x$, with no limit taken and no compactness invoked.

The present paper is the geometric half of the programme. Our aim is to show
that the effective structure theory of \cite{QNZ} is strong enough to carry
the geometric arguments that previously required the qualitative theorems
--- and that it does so with three kinds of gain. It gives quantitative
strengthenings of theorems previously accessible only in qualitative form
(Theorem~\ref{thm: qfg}); it produces statements which have no qualitative
counterpart, because they are statements about approximate objects
(Theorem~\ref{thm: B}); and, as in \cite{BH25}, it gives shorter proofs of
the qualitative theorems themselves (Theorem~\ref{thm: C}).

\subsection{Effective growth of the injectivity radius}

Let $G$ be a connected simple Lie group with finite centre and real rank at
least $2$, let $K<G$ be maximal compact and $X=K\backslash G$ the associated
symmetric space, and let $\Gamma\leq G$ be discrete. Recall that a closed
subgroup $\Lambda\leq G$ is \emph{confined} if the trivial subgroup is not a
Chabauty limit of conjugates of $\Lambda$; for a discrete subgroup of a
centre-free semisimple Lie group this happens precisely when the injectivity
radius of $\Lambda\backslash X$ is bounded above at every point
\cite[\S7]{BGL}. In this language, the theorem of Fraczyk and Gelander
\cite{FG} reads: \emph{a confined discrete subgroup of a higher-rank simple
Lie group is a lattice}. It resolves a conjecture of Margulis and implies his normal subgroup theorem --- the reason the
statement fails in rank one is that an infinite-index infinite normal
subgroup $N$ of a cocompact lattice yields a cover of a compact space, hence
a confined non-lattice, and in higher rank the normal subgroup theorem
forbids such an $N$. The confined-subgroup formulation has since been
pushed further by Bader--Gelander--Levit \cite{BGL}, who remove the property
(T) assumption and show that a confined subgroup of a higher-rank
irreducible lattice has finite index.

All of this is qualitative: confinement is an existence statement about
\emph{some} bound, with no control on the bound. In
\cite[Remark 1.4(ii)]{FG} the authors ask for a quantitative version. Our
first result answers this.

\begin{theoremalpha}\label{thm A}
	Let $G$ be a connected simple Lie group with finite centre and
	$\mathrm{rank}_\RR G\geq 2$, let $X=K\backslash G$, and let $\Gamma\leq G$ be a
	discrete subgroup which is not a lattice. Fix $o\in X/\Gamma$. Then
	there are constants $c>0$ and $r_0>0$, depending only on $G$, $\Gamma$
	and $o$, such that for every $r\geq r_0$ there is a point
	$p\in B_r^{X/\Gamma}(o)$ with
	\begin{equation}
		\on{inj}(p)\;\geq\; c\log^{(4)}r,
		\qquad \log^{(4)}=\log\log\log\log .
	\end{equation}
	Moreover such points are not exceptional: a definite proportion of the
	points visited by the random walk of length $r$ have injectivity radius
	at least $c\log^{(4)}r$. See Theorem~\ref{thm: qfg} for the precise
	statement.
\end{theoremalpha}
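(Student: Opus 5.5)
We argue by contradiction: assume $\on{inj}(p)\le L$ for all $p\in B_r^{X/\Gamma}(o)$, and derive a measure on $G/\Gamma$ that is so nearly $G$-invariant that the lattice criterion from the abstract forces $\Gamma$ to be a lattice. Fix a symmetric, compactly supported, absolutely continuous probability measure $\mu$ on $G$ whose support generates $G$; it has a spectral gap in every unitary representation without invariant vectors, by property (T). Lift $o$ to $x_0=[g_0]\in G/\Gamma$ and form the Ces\`aro average $\nu_n$ of \eqref{eq: cesaro intro} with $n\asymp r$. Then $\nu_n$ is supported in $B_{Cn}(x_0)$ and satisfies $\norm{\mu*\nu_n-\nu_n}_{TV}\le 2/n$. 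The relation between $n$ and $r$ is where the four logarithms will come from.

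Next we push $\nu_n$ to the Chabauty space via $x=g\Gamma\mapsto \stab(x)=g\Gamma g^{-1}$. This gives a $(2/n)$-almost stationary measure $\lambda_n$ on $\subd$. Under the standing assumption, $\lambda_n$ is supported on subgroups that contain a nontrivial element of displacement at most $L$ at the base point, so it is uniformly bounded away from the trivial subgroup. This is an \emph{effective confinement}.

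We then feed $\lambda_n$ into the almost Nevo--Zimmer theorem of \cite{QNZ}. Up to an error $\delta(\eps)$ with explicit (iterated-log type) dependence, an $\eps$-stationary measure is close to a measure induced from a parabolic $P$, meaning its $P$-component is almost $G$-invariant. Following the Fraczyk--Gelander strategy, we use the confinement together with higher rank to rule out the proper parabolic pieces. Rank $\ge2$ supplies a non-compact centraliser, along which the displacement-$L$ elements would have to persist; this contradicts discreteness in the quantitative form where the relevant Weyl chamber flow contracts them to size below the Zassenhaus constant within time $\asymp \log L$. The conclusion is that $\nu_n$ itself is $\eps_1$-close in $W_1^b$ to a $G$-almost-invariant measure, with $\eps_1$ an explicit function of $n$ and $L$. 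Once $\eps_1<\eps_0(\Gamma)$, the lattice criterion says $\Gamma$ is a lattice, which is a contradiction. Tracking the constants, this happens as soon as $L\le c\log^{(4)} r$: one logarithm comes from the rate in \cite{QNZ}, one from passing from TV to $W_1^b$ on a ball of radius $\asymp r$, and two from the exponential contraction/Zassenhaus step and the $\log L$ flow time. The ``definite proportion'' statement follows by the same argument applied to the $\nu_n$-measure of the set $\{\on{inj}\le L\}$ rather than its full support.

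The main obstacle is the middle step: showing, with explicit rates, that an almost stationary measure of bounded-injectivity stabilisers cannot be almost induced from a proper parabolic. This is exactly where Fraczyk--Gelander use a qualitative limit. I would first try a pigeonhole argument on the $P$-orbit of short elements, using the explicit error from \cite{QNZ}, and accept whatever iterated logarithm results from that.
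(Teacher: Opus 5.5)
Your architecture (Ces\`aro averages, the almost Nevo--Zimmer dichotomy, property (T) in the almost-invariant alternative, a Fraczyk--Gelander contraction argument in the parabolic one) matches the paper's. But the step you defer is the heart of the proof (Lemma~\ref{lem: almost factor}), and the mechanism you sketch for it does not work.

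\textbf{The parabolic alternative.} Conjugation by an element $a$ of the centre $A_L$ of a Levi factor $L$ of $Q$ contracts only the unipotent-radical component. Since $\on{Ad}_a$ fixes $L$ pointwise, short elements are not pushed below the Zassenhaus radius. Moreover, a single short element contradicts nothing: discrete subgroups can have short elements, and $a\Lambda a^{-1}$ is a different subgroup from $\Lambda$. The paper's argument has four ingredients.
\begin{enumerate}
\item It runs a second walk of length $m\approx\log R$ from each point of the factor set. Its almost Furstenberg decomposition gives $m^{-1/2}$-almost $P$-invariant pieces $\lambda_\theta$, on which proximity to $Q$ becomes $\delta e^m$ on the ball of radius $R/e^m$.
\item It uses the Margulis-function inequality of the appendix to make $\lambda_\theta$-most subgroups $r$-discrete. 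This quantitative discreteness input is entirely absent from your proposal, and it is indispensable.
\item It uses almost $A_L$-invariance to show that each of $N\gg (R/r)^{\dim G}$ pairwise disjoint contraction shells is met with probability $\geq c/4$. A probabilistic pigeonhole then yields one $r$-discrete subgroup with $k$ distinct elements in a bounded set. This is the contradiction that upgrades ``near $Q$'' to ``near $L$''.
\item It intersects finitely many Levi factors $q_iLq_i^{-1}$, $q_i\in U\subset P$ (Lemma~\ref{lem: trivial intersection}), to reach $\{e\}$.
\end{enumerate}
Confinement plays no role here. The factor case directly produces a $\nu_n$-proportion $\gtrsim\eta$ of subgroups meeting $G^{||}_{(\log R)^\beta}$ trivially. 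This is transferred back to $\nu_n$ via $d_{\mathrm{TV}}\big((\tfrac1m\sum_{i\le m}\mu^{*i})*\nu_n,\nu_n\big)\le 2m/n$, which is also where the definite proportion comes from. Rank $\geq 2$ enters only through the dichotomy and property (T), not via a non-compact centraliser.

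\textbf{The almost-invariant alternative.} Your use of it relies on stronger forms of both tools than are available.
\begin{itemize}
\item The dichotomy (Theorem~\ref{thm: qnz intro}) is stated for the heat kernel, not a compactly supported $\mu$.
\item Its alternative (a) gives almost invariance only along one fixed Lipschitz observable $f$. It does not give $W_1^b$-closeness of $\nu_n$ to a $G$-almost-invariant measure.
\item The criterion actually proved (Lemma~\ref{lem: almost invariance}) needs $W_1^{f_{\nu,g}}$-almost invariance for the density ratios $f_{\nu,g}=d\nu/d(g\nu)$, and these ratios must be bounded and Lipschitz. The test function $1-f_{\nu,g}^{-1}$ is what converts weak almost invariance into $L^1(m_{G/\Gamma})$-almost invariance of the density, to which the $L^1$ form of property (T) applies. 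A spectral gap for $\mu$ in unitary representations is not the relevant input.
\end{itemize}
This is why the paper first replaces $\nu_n$ by a nearby measure with log-Lipschitz density, using sharp heat-kernel estimates and McShane extension. It then applies the dichotomy to each $f_{\nu,g}$, $g\in G_1$, separately, and lands in the factor case as soon as one of them fails (a).

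\textbf{The logarithm count.} Your count is off. Passing from total variation to $W_1^b$ costs nothing, since $W_1^b\le 2d_{\mathrm{TV}}$. The four logarithms come from alternative (b) of the dichotomy (two), from the choice $m\approx\log R$ in the factor lemma (one), and from passing from norm balls to Riemannian balls (one).
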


A locally symmetric space of finite volume has bounded injectivity radius,
since an embedded ball of radius $R$ has volume bounded below by a function
of $R$ alone. Theorem~\ref{thm: qfg} therefore says that no intermediate
behaviour is possible:

\begin{corollary}\label{cor: gap}
	With $G$, $X$, $\Gamma$, $o$ as above, exactly one of the following
	holds:
	\begin{enumerate}
		\item $\Gamma$ is a lattice, and $\sup_{X/\Gamma}\on{inj}<\infty$;
		\item $\displaystyle\liminf_{r\to\infty}\;
		      \frac{\max_{p\in B^{X/\Gamma}_r(o)}\on{inj}(p)}{\log^{(4)}r}
		      \;>\;0 .$
	\end{enumerate}
	Equivalently: a discrete subgroup $\Gamma\leq G$ whose maximal
	injectivity radius on $B_r(o)$ is $o(\log^{(4)}r)$ along some sequence
	$r\to\infty$ is a lattice.
\end{corollary}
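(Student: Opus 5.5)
The corollary is a formal consequence of Theorem~\ref{thm A} together with the elementary fact that finite-volume quotients have bounded injectivity radius. We split according to whether $\Gamma$ is a lattice, and the "exactly one" claim reduces to showing the two alternatives are exclusive and exhaustive.

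\emph{Case 1: $\Gamma$ is a lattice.} Then $\Gamma\backslash X$ (equivalently $X/\Gamma$ in the paper's notation) has finite volume $V$. Suppose $\on{inj}(p)\geq R$. Then the ball of radius $R$ about $p$ embeds isometrically, so
\[
\on{vol}\bigl(B^X_R\bigr)\leq V .
\]
Since $X$ is a symmetric space of noncompact type, $\on{vol}(B^X_R)\to\infty$; indeed it grows exponentially in $R$. Hence $\sup_{X/\Gamma}\on{inj}<\infty$. In particular $\max_{p\in B_r(o)}\on{inj}(p)$ is bounded, while $\log^{(4)}r\to\infty$, so the $\liminf$ in (2) equals $0$ and alternative (2) fails.

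\emph{Case 2: $\Gamma$ is not a lattice.} Theorem~\ref{thm A} gives $c,r_0>0$ such that for every $r\geq r_0$,
\[
\max_{p\in B_r(o)}\on{inj}(p)\geq c\log^{(4)}r .
\]
Therefore the $\liminf$ in (2) is at least $c>0$, so (2) holds. Alternative (1) fails because $\Gamma$ is not a lattice. One small point: the max over the closed ball may need to be a sup, but since Theorem~\ref{thm A} provides an explicit point in the ball, the lower bound holds either way.

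\emph{Equivalent formulation.} Assume the maximal injectivity radius on $B_r(o)$ is $o(\log^{(4)}r)$ along some sequence $r_n\to\infty$. Then the $\liminf$ in (2) is $0$, so (2) fails, and by the dichotomy $\Gamma$ is a lattice.

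There is no real obstacle here. All the substance lies in Theorem~\ref{thm A}, and the only geometric input we need is the unbounded volume growth of balls in $X$.
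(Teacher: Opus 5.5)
Your proposal is correct and follows the same route as the paper. The paper also gets the dichotomy from Theorem~\ref{thm A} for non-lattices, and from the observation that an embedded ball of radius $R$ has volume bounded below by a function of $R$ alone, so finite covolume forces bounded injectivity radius.
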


We do not expect $\log^{(4)}$ to be optimal. Two comparisons put the
statement in perspective. On the one hand, in the analogous situation for
graphs, Ab\'ert--Glasner--Vir\'ag \cite{AGV-Kesten,AGV-Kesten2} prove that in
a Ramanujan graph $X$ the proportion of vertices of injectivity radius at
most $\beta\log^{(2)}|X|$ is $O(\log(|X|)^{-\beta})$; two logarithms, in a
setting where a spectral gap is available from the start.

On the other hand, the finite-volume mirror of our question --- how large
must the injectivity radius of a congruence cover be? --- has a
well-developed quantitative theory going back to Buser--Sarnak \cite{BS94},
with logarithmic-in-volume systole bounds and, in several cases, sharp
constants \cite{KSV,Murillo,LLM}. It is worth noting that the general
argument yielding $\operatorname{sys}\geq C\log\operatorname{vol}$ does not
produce an explicit $C$, and that obtaining one has repeatedly proved
useful. In the opposite direction, Anosov subgroups provide the natural
supply of infinite-covolume examples in higher rank, and their quotients
have unbounded injectivity radius \cite{KLP}.

The reader may wonder about the source of the four logarithms appearing in Theorem \ref{thm: qfg}, so we explain the source of each of them. 
One Logarithm is due to the use of \cite[Theorem 1.10]{QNZ} as our main tool in the proof. 
As we explain later in the method section, our core idea is to analyze almost stationary measures coming from $n$-step C\'ezaro averages.
The almost factor estimates in the second case of Theorem 1.10 of \cite{QNZ} (which is used to analyze this C\'ezaro average), both in the radius and the estimate itself, contain a logarithm - accounting for two of the four. 
Lemma \ref{lem: almost factor}, which is our tool for dealing with the second case of \cite[Theorem 1.10]{QNZ}, also involves a logarithm which accounts for the third.
The last logarithm of the four is due to the fact that the balls we made sure conjugates of $\Gamma$ don't intersect are all norm balls. 
The choice of norm balls is important for the proof, since we use controlled expansion estimates often during the course of the argument.
Going from norm balls to balls in the Riemannian metric requires an additional logarithm, which adds up to four logarithms in total.

\subsection{A criterion for being a lattice}
The proof of Theorem~\ref{thm A} passes through a statement which we believe is of
independent interest, and which is of a kind that cannot be formulated in the
qualitative theory at all: it characterises lattices by the existence of an
\emph{approximately} invariant measure. Below, $G_1$ is the unit ball of $G$, and for
a measurable $f$ on $G/\Gamma$ we write $W_1^{f}$ for the distance between two measures as
seen through $f$, that is, the supremum of $|\nu_1(\phi\circ f)-\nu_2(\phi\circ f)|$
over $1$-Lipschitz $\phi$ bounded by $1$ (Definition~\ref{def: metric on measures}).

\begin{theoremalpha}\label{thm: B}
	Let $G$ be as in Theorem~\ref{thm A} and assume that $G$ has property~$(T)$. For
	every discrete subgroup $\Gamma\leq G$ there is a threshold
	$\eps_0=\eps_0(G,\Gamma)>0$ such that the following holds for every $\eps<\eps_0$.
	Suppose $\nu$ is a probability measure on $G/\Gamma$, absolutely continuous with
	respect to $m_{G/\Gamma}$, whose density ratios
	$f_{\nu,g}=\frac{\bd \nu}{\bd (g\nu)}$, $g\in G_1$, are bounded above and below and
	are Lipschitz outside a set of $\nu$-measure $\eps$. If
	\begin{equation}\label{eq: thmB hypothesis}
		W_1^{f_{\nu,g}}(g\nu,\nu)\;\leq\;\eps \qquad\text{for every }g\in G_1,
	\end{equation}
	then $\Gamma$ is a lattice. Conversely, a lattice carries a $G$-invariant
	probability measure on $G/\Gamma$, for which \eqref{eq: thmB hypothesis} holds with
	$\eps=0$. The precise form is Lemma~\ref{lem: almost invariance}.
\end{theoremalpha}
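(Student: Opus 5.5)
We argue by contradiction: assume $\Gamma$ is not a lattice, so $G/\Gamma$ has infinite volume and carries no $G$-invariant probability measure. The first move is to convert the hypothesis \eqref{eq: thmB hypothesis} into an almost invariant unit vector in $L^2(G/\Gamma,m_{G/\Gamma})$. With $\rho=\frac{\bd\nu}{\bd m_{G/\Gamma}}$ we take $\xi=\rho^{1/2}$, a unit vector. Since $g\nu$ has density $\rho\circ g^{-1}$, we get
\[
\norm{g\xi-\xi}_2^2=\int\bigl(1-f_{\nu,g}^{-1/2}\bigr)^2\,\bd\nu .
\]
We would bound this by testing $W_1^{f_{\nu,g}}(g\nu,\nu)$ against suitable bounded $1$-Lipschitz functions of the density ratio, for instance truncations of $t\mapsto t^{\pm1/2}$ composed with $f_{\nu,g}$. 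The two-sided bounds on $f_{\nu,g}$ make the truncation harmless up to constants. The Lipschitz hypothesis outside a set of $\nu$-measure $\eps$ is what lets a weak-type (Wasserstein) statement control an $L^2$-type quantity: off the bad set the ratio is regular, so comparing the law of $f_{\nu,g}$ under $g\nu$ with its law under $\nu$ controls $\int(f-1)^2\,\bd\nu$ up to $O(\eps)$. The target is
\[
\sup_{g\in G_1}\norm{g\xi-\xi}_2\le C\sqrt{\eps},
\]
with $C$ depending only on the bounds in the hypothesis.

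The second step is to invoke property $(T)$. Let $(Q,\delta)$ be a Kazhdan pair for $G$ with $Q=G_1$; this is possible since $G_1$ generates $G$, and $G_1$ is compact. Choose $\eps_0$ so that $C\sqrt{\eps_0}<\delta$. Then the unitary representation $L^2(G/\Gamma)$ has a nonzero $G$-invariant vector $\eta$. An invariant $L^2$ function on the transitive space $G/\Gamma$ is a.e. constant, and a nonzero constant is square-integrable only when $m_{G/\Gamma}$ is finite. Hence $\Gamma$ is a lattice, which contradicts our assumption. The converse direction is immediate: for a lattice, take $\nu=m_{G/\Gamma}$ normalized; then $f_{\nu,g}\equiv1$ and both sides of \eqref{eq: thmB hypothesis} vanish.

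The main obstacle is the first step, namely extracting an $L^2$ bound from $W_1^{f}$. The functional $W_1^f$ only sees the pushforward of $g\nu$ and $\nu$ under $f=f_{\nu,g}$. The quantity $\int\phi(f)\,\bd(g\nu)-\int\phi(f)\,\bd\nu$ equals $\int\phi(f)\,(f^{-1}-1)\,\bd\nu$. We would choose $\phi(t)$ as a bounded Lipschitz approximation of $1-t$ (clipped at the known upper and lower bounds of $f$). This choice gives
\[
\int (1-f)(f^{-1}-1)\,\bd\nu=\int \frac{(f-1)^2}{f}\,\bd\nu\le c\,\eps .
\]
This dominates $\int(1-f^{-1/2})^2\,\bd\nu$ up to constants. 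If this works, the Lipschitz-outside-a-small-set hypothesis may only be needed to make the clipping rigorous, and otherwise that hypothesis would be handled by splitting the integral over the exceptional set of measure $\eps$.
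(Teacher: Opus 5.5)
Your argument is correct, and its second half takes a different route from the paper's.

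\textbf{Common first step.} Both proofs begin the same way. You test $W_1^{f}(g\nu,\nu)\le\eps$ against a clipped, rescaled function of $f=f_{\nu,g}$. Using $\int\phi(f)\,\bd(g\nu)-\int\phi(f)\,\bd\nu=\int\phi(f)(f^{-1}-1)\,\bd\nu$ on the good set, this produces a quadratic integral of size $O(\eps)$.

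\textbf{Where the routes split.} The paper takes $\phi(t)=1-t^{-1}$ and obtains $\int(1-f^{-1})^2\,\bd\nu=O(\eps)$. Cauchy--Schwarz then gives $\norm{\rho_\nu\circ g^{-1}-\rho_\nu}_{L^1(m_{G/\Gamma})}=O(\sqrt{\eps})$. It concludes with the Bader--Furman--Gelander--Monod theorem that property $(T)$ implies $(T_B)$ for $B=L^1$ of a $\sigma$-finite measure.

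You instead take $\phi(t)=1-t$ and obtain $\int (f-1)^2f^{-1}\,\bd\nu=O(\eps)$. The pointwise inequality $(f-1)^2/f\ge(1-f^{-1/2})^2$ holds because, with $s=f^{-1/2}$, the left side is $(1-s)^2(1+s^{-1})^2$. This bounds $\norm{g\xi-\xi}_2$ for $\xi=\rho_\nu^{1/2}$, and classical Kazhdan $(T)$ in $L^2(G/\Gamma)$ finishes the proof (take $Q=\overline{G_1}$; continuity extends your bound to the closure).

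\textbf{What each route buys.} The square-root map is exactly the $p=1$ Mazur map by which $L^1$ statements of BFGM type are reduced to Hilbert space. You are carrying out that reduction by hand in the only case needed: positive densities, and no nonzero invariant vectors when $\Gamma$ is not a lattice, so no quotient representation enters. Your route has three advantages.
\begin{enumerate}
\item It is self-contained modulo classical $(T)$.
\item It keeps the rate $\sqrt{\eps}$. Passing through the paper's $L^1$ bound and $\norm{\sqrt a-\sqrt b}_2^2\le\norm{a-b}_1$ would only give $\eps^{1/4}$.
\item A Kazhdan pair $(\overline{G_1},\delta)$ works for all unitary representations at once. So your threshold $\eps_0$ depends only on $G$ and the ratio bounds, not on $\Gamma$. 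The paper's threshold is extracted from $(T_B)$ for the particular representation on $L^1(m_{G/\Gamma})$.
\end{enumerate}

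\textbf{Two small points to tidy.} First, the identity $\norm{g\xi-\xi}_2^2=\int(1-f^{-1/2})^2\,\bd\nu$ only holds where both densities are positive. On the exceptional set, bound the contribution by $\nu(X_0^c)+g\nu(X_0^c)\le 2\eps$, using $G_1=G_1^{-1}$. Second, your hesitation about where the Lipschitz hypothesis enters is unnecessary. $W_1^{f}$ only tests functions of the values of $f$, so Lipschitz regularity of $f$ on $G/\Gamma$ is never used. In the paper that hypothesis is present because the lemma is fed by the almost Nevo--Zimmer theorem, which requires Lipschitz test functions.
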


One direction is trivial. The content is the converse, which says that on $G/\Gamma$
there is no approximate invariance without exact invariance: below a threshold
depending only on $G$ and $\Gamma$, a measure which is almost invariant can only exist
if an invariant one does. This is a gap phenomenon in the spirit of property~$(T)$, and
indeed property~$(T)$ enters the proof --- in the $L^1$ form established by Bader,
Furman, Gelander and Monod \cite{TL1}, since the object we produce is a density rather
than a vector in a Hilbert space.

To motivate this theorem we give a rough outline of the proof.
The overall goal in the proof is to pass from weak almost invariance, expressed in the assumptions of Equation \eqref{eq: thmB hypothesis}, to almost invariance in $L^1(m_{G/\Gamma})$ of the density of $\nu$.
The passage from weak distance to $L^1$ distance is not a new concept. Indeed, given three measures $\theta_1,\theta_2\ll \rho$ on a probability measure space $X$, it is well known (see e.g. \cite{intenetComputationTotalVariation} or \cite[Claim 3.10]{QNZ}) that 
\begin{align}\label{eq: set demonstration}
    \sup\{|\theta_1(A)-\theta_2(A)|:A\subset X\text{ measurable}\}=\tfrac{1}{2}\norm{f_{\theta_1}-f_{\theta_2}}_{L^1(\rho)}
\end{align}
where $f_{\theta_i}$ is the density of $\theta_i$ with respect to $\rho$ for $i=1,2$.
The left hand side in the above equation represents the weak distance, while the right hand side the $L^1$ distance.

Strikingly, the proof of the above equation already shows that the distance in the left hand side is already attained on a specific set, namely $\{x\in X:f_{\theta_1}\geq f_{\theta_2}\}$ (again, we refer the reader to either \cite{intenetComputationTotalVariation} or \cite[Claim 3.10]{QNZ} for a proof of this fact).

The assumption of Theorem \ref{thm: B} is then simply a rewriting of Equation \eqref{eq: set demonstration} with an observable instead of the set $\{x\in X:f_{\theta_1}\geq f_{\theta_2}\}$, where the two measures in question are $\nu$ and $g.\nu$ and $g$ varies in $G_1$.
Indeed, we choose the test function $\phi(t):=1-t^{-1}$ and compose it with $f_{\nu,g}=\frac{d\nu}{d(g\nu)}$ so that the composition is nonnegative exactly when $\frac{d\nu}{dm_{G/\Gamma}}\geq \frac{dg\nu}{dm_{G/\Gamma}}$, completing the analogy to the set $\{x\in X:f_{\theta_1}\geq f_{\theta_2}\}$ described above.
Our proof then uses \eqref{eq: set demonstration} to show that satisfying the inequality \eqref{eq: thmB hypothesis} implies $L^{1}(m_{G/\Gamma})$-almost invariance of the density of $\nu$ under each $g\in G_1$, and together with the aforementioned application of property $(T)_B$ of Bader Gelander and Monod, we are able to deduce that $\Gamma$ is a lattice if $\eps$ was chosen small enough.

\subsection{A short proof of the St\"uck--Zimmer theorem}

Our method also gives a new proof of the following theorem, one of the two
structure theorems that the programme is designed to replace.

\begin{theorem}[St\"uck--Zimmer \cite{Stuck-Zimmer}]\label{thm: C}
	Let $G$ be a connected centre-free simple Lie group with
	$\mathrm{rank}_\RR G\geq 2$ and let $G\curvearrowright(X,\nu)$ be an ergodic,
	measure-preserving action on a standard probability space. Then the
	action is either essentially free or essentially transitive.
\end{theorem}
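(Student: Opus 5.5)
We plan to follow the classical reduction to stabilizers, replacing the passage to limits by the almost-stationarity structure of \cite{QNZ} and by Theorem~\ref{thm: B}.

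\textbf{Reduction to an invariant random subgroup.} Consider the stabilizer map $\stab\colon X\to\on{Sub}(G)$, $x\mapsto G_x$. It is $G$-equivariant, so $\mu:=\stab_*\nu$ is an ergodic invariant random subgroup of $G$. If $\mu=\delta_{\{e\}}$, the action is essentially free. If $\mu=\delta_G$, then by ergodicity and the fact that $G$ is centre-free and simple, the action is trivial, which is a degenerate instance of essential transitivity (a point). So it suffices to show that every other ergodic invariant random subgroup is supported on conjugates of a lattice $\Gamma$. Then $\nu$-a.e.\ stabilizer is a conjugate of $\Gamma$, and ergodicity forces the action to be essentially $G/\Gamma$, i.e.\ essentially transitive. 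This last deduction uses the standard fact that an action with a.e.\ stabilizers conjugate to a fixed $\Gamma$ factors through $G/N_G(\Gamma)$, together with ergodicity.

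\textbf{Discreteness of stabilizers.} For $\mu$-a.e.\ $H$, the identity component $H^\circ$ is normalized by a.e.\ conjugate. Invariance then gives that the Zariski closure of the Lie algebras, or their span, is $\Ad(G)$-invariant in distribution. We would argue, as in the standard argument, that the Lie algebra of $H$ is a.s.\ an ideal, hence either $0$ or $\g$. In the latter case $H=G$, which was excluded. So $\mu$ is supported on discrete subgroups.

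\textbf{Main step: the lattice criterion.} Take a $\mu$-generic discrete $\Gamma$ and the Cesàro averages $\nu_n$ of \eqref{eq: cesaro intro} on $G/\Gamma$, starting from a smoothed point mass. These are $2/n$-almost stationary. Apply \cite[Theorem 1.10]{QNZ}: either $\nu_n$ is almost $G$-invariant, or it has an almost factor to a nontrivial quotient $G/Q$ with $Q$ a proper parabolic. In the first case we pass to a smoothed density satisfying \eqref{eq: thmB hypothesis} with $\eps<\eps_0$, and Theorem~\ref{thm: B} (which uses property (T) via \cite{TL1}) gives that $\Gamma$ is a lattice. To handle groups without (T), we would instead invoke \cite{BGL}-type confinement.

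\textbf{The hardest step} is ruling out the almost-factor case. Here we use invariance of $\mu$. A.e.\ $\Gamma$ is confined along the random walk, because the IRS is nontrivial and, by invariance, conjugates of $\Gamma$ do not approach $\{e\}$ in measure. An almost factor to $G/Q$ would force the random-walk trajectory in $G/\Gamma$ to escape in a $Q$-unipotent direction. The contraction of unipotent radicals along the walk would then push $g\Gamma g^{-1}$ Chabauty-close to subgroups meeting $\on{Rad}_u(Q)$ trivially near the identity, contradicting confinement. We would make this quantitative using Lemma~\ref{lem: almost factor} and controlled expansion estimates on norm balls. We expect the main difficulty to be verifying that confinement along a $\mu$-typical walk holds uniformly enough to beat the $\log$ losses in the almost-factor estimate.
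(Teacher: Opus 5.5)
Your outline has the same architecture as the paper's proof. It reduces to an ergodic discrete invariant random subgroup (Proposition~\ref{prop: subdg}; your discreteness step is best justified by Borel density applied to the law of $\on{Lie}(H)$). It takes Ces\`aro averages from a generic point and splits by Theorem~\ref{thm: qnz intro}, with Theorem~\ref{thm: B} handling the almost invariant case. Property $(T)$ holds for every simple group of real rank $\geq 2$, so no fallback without $(T)$ is needed.

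\textbf{The gap: the almost-factor case.} This is the step that carries the proof, and you leave it open. The mechanism you sketch for it would not work.
\begin{itemize}
\item Case (b) is a statement about stabilisers: on a large norm ball they lie $\delta$-close to a proper parabolic $Q$. It says nothing about the trajectory escaping in a unipotent direction.
\item Pushing conjugates Chabauty-close to subgroups that meet $\on{Rad}_u(Q)$ trivially near $e$ contradicts nothing, since such subgroups can stay far from $\{e\}$.
\item What is needed is that a definite proportion of conjugates meet an \emph{entire} large norm ball trivially.
\end{itemize}
Lemma~\ref{lem: almost factor} gives exactly this, in two moves. It passes from $Q$ to a Levi factor $L$, using contraction by the centre of $L$, the almost $P$-invariance of the Furstenberg components, and a pigeonhole argument against discreteness. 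It then passes from $L$ to $\{e\}$ via finitely many conjugates of $L$ with trivial intersection (Lemma~\ref{lem: trivial intersection}).

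To apply it to a positive $\nu_n$-proportion of points you also need two further inputs. First, Lemma~\ref{lem: margulis functions intro}, because the constant in Lemma~\ref{lem: almost factor} depends on $\Ii(\stab_G(x))$. Second, almost stationarity, to transfer the bound back to $\nu_n$. All of this is already packaged as part~2 of Theorem~\ref{thm: qfg}, which precedes the statement and can simply be quoted.

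\textbf{How the argument closes.} Once you quote it, the difficulty you anticipate disappears. No uniformity against the logarithmic losses is needed, only a radius tending to infinity with a proportion bounded below independently of $n$.
\begin{itemize}
\item Write $\nu$ for the invariant random subgroup (your $\mu$ clashes with the step measure). Take $x$ generic, so that $\rho_n=\frac1n\sum_{i=1}^n\mu^{*i}*\delta_x\to\nu$ weakly (Claim~\ref{cl: stat ergodicity}).
\item If $x$ is not a lattice, Theorem~\ref{thm: qfg} gives $\rho_n(\Ii_{c\log^{(4)}n})\geq c_G$ for all large $n$.
\item The sets $\Ii_R$ are Chabauty-closed and decreasing in $R$, with intersection the single trivial subgroup. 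So for each fixed $R$ the portmanteau theorem gives $\nu(\Ii_R)\geq\limsup_n\rho_n(\Ii_R)\geq c_G$, because $\Ii_{c\log^{(4)}n}\subseteq\Ii_R$ once $n$ is large.
\item Hence the trivial subgroup is a $\nu$-atom of mass at least $c_G$, and ergodicity forces $\nu=\delta_{\{e\}}$.
\end{itemize}
This is your confinement contradiction run directly, and the rate plays no role.

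\textbf{A missing line in the lattice case.} Knowing that a.e.\ stabiliser is a lattice does not by itself give a single conjugacy class. The paper supplies this: for a generic lattice point $x$, $\rho_n$ converges to the $G$-invariant probability measure carried by the orbit $G\cdot x$. So $\nu$ equals that measure by uniqueness of weak limits.
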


The original argument proceeds through the Nevo--Zimmer theorem, the
intermediate factor theorem \cite{NZ-IFT} and an analysis of amenable
actions. In Section~\ref{sec: stuck zimmer} we give a proof, one page long,
which uses none of these: it is a direct derivation from the effective
structure theorem of \cite{QNZ}. We emphasise that the proof of that theorem
in \cite{QNZ} is independent of \cite{Stuck-Zimmer}, so no circularity is involved.

Our argument, like the original, uses property~$(T)$, and it is worth
recalling how much is known without it. Hartman and Tamuz \cite{HT} showed
that property~$(T)$ for a single simple factor suffices; the St\"uck--Zimmer
conjecture, that the theorem holds for every higher-rank semisimple group
regardless of property~$(T)$, remains open \cite[Question 2.1]{Gel-survey},
with substantial partial results obtained by Bader--Shalom \cite{BSh},
Creutz--Peterson \cite{CP}, Bader--Boutonnet--Houdayer--Peterson \cite{BBHP}
and, most recently, Bader--Gelander--Levit \cite{BGL}. We do not know
whether the method of this paper has anything to say in that direction.

The reason a quantitative tool gives a shorter qualitative proof is
explained in Section~\ref{sec: methods intro}, and is, we think, the most
interesting structural feature of the method.

\subsection{Method}\label{sec: methods intro}

We sketch the proof of Theorem~\ref{thm: qfg}; Theorems~\ref{thm: B}
and~\ref{thm: C} come out of the same two lemmas.

Suppose the injectivity radius on $X/\Gamma$ grows slowly. As in \cite{FG},
this is converted into information about measures on $\subd$, but here the
conversion is quantitative and, crucially, involves no limit: we take
$x=\delta_\Gamma$ in \eqref{eq: cesaro intro} and work with the measure
$\nu_n$ itself, which is $2/n$-almost stationary for free. Two things follow
from not passing to a limit. First, the argument retains a rate throughout.
Second --- and this is what shortens the proofs --- the measure $\nu_n$ is
supported on the single orbit $G\cdot\delta_\Gamma\cong G/N_G(\Gamma)$
rather than on an unknown weak limit, and this orbit is a homogeneous
space over which we have direct analytic control. Where the classical scheme
must invoke St\"uck--Zimmer to identify an abstract invariant measure, we
may instead argue on $L^1(G/N_G(\Gamma))$ using property~$(T)$ and the
associated spectral gap.

The measure $\nu_n$ is then fed into the main theorem of \cite{QNZ}, which
in the form we use reads, informally: for an $\eps$-almost stationary
measure $\nu$ on a $G$-space $X$, either
\begin{itemize}
	\item[(a)] $\nu$ is invariant under $G_1$ up to an error which is
	      polynomially small in $|\log\eps|^{-1}$; or
	\item[(b)] a definite proportion of the mass of $\nu$ sits on points
	      whose stabilisers, seen at a large scale, are approximately
	      contained in proper parabolic subgroups of $G$.
\end{itemize}
The precise statement, with the definition of an almost projective factor,
is Theorem~\ref{thm: qnz intro} in Section~\ref{sec: def}; see
\cite[Section~1.2]{QNZ} for a discussion of it.

This alternative, together with the criterion of Theorem~\ref{thm: B},
already yields a new proof of the St\"uck--Zimmer theorem
(Theorem~\ref{thm: C}), which we now sketch. Let $\nu$ be an ergodic
invariant measure on a $G$-space $X$, let $x\in X$ be generic, and let
$\nu_n$ be the measures obtained by averaging $\delta_x$ as in
\eqref{eq: cesaro intro}. By the ergodic theorem $\nu_n\to\nu$ weakly, and
by \eqref{eq: cesaro intro} each $\nu_n$ is $2/n$-almost stationary, so the
above alternative applies to it. Suppose first that every $\nu_n$ falls into
case~(b). Passing to the limit, a definite proportion of $X$ consists of
points whose stabiliser is contained in a proper parabolic subgroup,
contradicting the Borel density theorem \cite[Thm 2.9]{7s17} unless the action is essentially free. Hence, either $\Stab(x) =\{e\}$ or some $\nu_n$ falls
into case~(a), and Theorem~\ref{thm: B} then forces $\stab_G(x)$ to be a
lattice for generic $x$; that is, the action is essentially transitive.

The proof of Theorem~\ref{thm: qfg} follows the same lines, but case~(b)
requires finer work. Case~(a) is again excluded by
Lemma~\ref{lem: almost invariance}, which is Theorem~\ref{thm: B}: an almost
invariant measure on $G/\Gamma$ would force $\Gamma$ to be a lattice,
contrary to hypothesis. Case~(b) is thus the productive one, and
Lemma~\ref{lem: almost factor} shows that a point whose stabiliser is
approximately contained in a proper parabolic subgroup at scale $R$ has
large injectivity radius. The mechanism is a tension between two constraints.
A parabolic subgroup is contracted by conjugation by a suitable element of
the Cartan subgroup, so a subgroup lying close to one can be contracted into
a small neighbourhood of the identity; but the stabiliser is discrete, and a
discrete subgroup admits no such contraction unless it is trivial there.
Being simultaneously discrete and nearly parabolic therefore forces the
stabiliser to be trivial on a large ball, which is precisely a lower bound
on the injectivity radius. This may be read as a quantitative refinement of
the classical Kazhdan--Margulis argument \cite{KM68}.

The four logarithms in Theorem~\ref{thm: qfg} are the accumulated cost of
these two steps.

\subsection{Outlook}

The structure theory of \cite{QNZ} is not specific to the injectivity
radius, and we regard Theorems~\ref{thm: qfg}--\ref{thm: C} as a first
instalment rather than the intended endpoint. Three directions seem to us
the most immediate.

\emph{Effective Benjamini--Schramm convergence.} Theorem~\ref{thm: qfg}
concerns discrete subgroups of infinite covolume, but the machinery behind
it does not: the measures $\nu_n$ of \eqref{eq: cesaro intro} are almost
stationary whatever the covolume of $\Gamma$, and the dichotomy of
\cite{QNZ} applies to them verbatim. In the lattice case the relevant
question is not whether the conclusion of Theorem~\ref{thm: qfg} holds but
how large the constant $c_G$ can be made, since $c_G$ measures the
proportion of the space on which the injectivity radius is large. In work in
preparation we show that $c_G$ may be taken arbitrarily close to $1$, which
yields an explicit threshold in the following classical statement.

A sequence of pairwise non-conjugate lattices in a higher-rank group is
Farber \cite{7s17}: for every $r$ and $\eps$ there is a volume $V$ beyond
which the $r$-thick part of an $X$-manifold occupies all but an
$\eps$-proportion of the volume \cite[Theorem 3.6]{Gel-survey}. The
threshold $V=V(r,\eps)$ is produced by a compactness argument off the
St\"uck--Zimmer theorem, and no bound on it is known. Our method gives one.
We do not expect it to be close to optimal --- it inherits the iterated
logarithms of Theorem~\ref{thm: qfg}, so the threshold is of tower type in
$r$ --- but it is, to our knowledge, the first explicit threshold for an
arbitrary sequence of lattices, and it gives corresponding rates for the
convergence of normalised Betti numbers \cite{7s17,ABBG}.

This is precisely the case left open by the existing effective results. For
\emph{congruence} covers of a fixed arithmetic manifold, a strong
quantitative form of Benjamini--Schramm convergence, with Sarnak--Xue-type
consequences \cite{SX} for normalised Betti numbers, is already available
\cite{7s17}, and analogous statements are known through property~$(\tau)$
and the trace formula \cite{Levit-BS,Raimbault,Fraczyk-limit}. What these
have in common is a source of uniformity external to the rigidity theory ---
congruence structure, a spectral gap, the trace formula. The point of the
present approach is that no such input is required.

\emph{Quantitative St\"uck--Zimmer.} Our proof of Theorem~\ref{thm: C}
produces more than the qualitative statement: it shows that a measure which
is only almost invariant on $\subd$, and which is supported away from
subgroups with small discreteness radius, is close to a convex combination
of an invariant measure supported on lattices and $\delta_{\{e\}}$. We will
state and prove this elsewhere.

\emph{Extending the structure theory.} Theorem~\ref{thm: qnz intro} is
stated for the heat kernel as step measure and does not cover free actions;
both restrictions are expected to be removable, and \cite{QNZ} also
announces an ergodic-decomposition statement for almost stationary measures.
Each of these would feed directly back into the geometric applications.

Finally, we note that the results of this paper belong to a broader effort
to replace qualitative invariants of lattices by quantitative ones ---
volume versus homotopy type \cite{Gel04}, volume versus rank
\cite{AGN,LS,FMW}, volume versus torsion \cite{ABFG,FHR}, and the
quantitative form of Selberg's lemma of Gelander--Slutsky \cite{GS-Selberg},
where a qualitative finiteness statement is likewise replaced by an explicit
bound in the covolume together with examples showing it to be nearly sharp.

\subsection{Structure of the paper}

Section~\ref{sec: def} fixes notation and states the effective Nevo--Zimmer
theorem in the form we use. Section~\ref{sec: almost stationarity in g mod
gamma} states Lemmata~\ref{lem: almost invariance} and~\ref{lem: almost
factor}, which handle the two alternatives, and deduces
Theorem~\ref{thm: qfg} from them. Sections~\ref{sec: almost invariance}
and~\ref{sec: almost factor} prove the two lemmata; Theorem~\ref{thm: B} is
proved along the way in Section~\ref{sec: almost invariance}.
Section~\ref{sec: stuck zimmer} contains the proof of Theorem~\ref{thm: C}.

\subsection{Acknowledgments}

We thank Alex Gorodnik, Amos Nevo, Tsachik Gelander, Barak Weiss, Arie
Levit, Manfred Einsiedler, Elon Lindenstrauss, Emmanuel Breuillard, Mikolaj
Fraczyk and Segev Gonen Cohen for many discussions during the writing of
this paper. We also thank Yuval Gorfine and Michael Glasner for many long
discussions and for listening to many failed strategies. The last named
author would like to acknowledge the positive influence of L.R.\ on the
making of this project.

The first named author was supported by ISF grant 3423/24. The last named
author was supported by SNF grant 200020--212617.

Last but not least, the last named author would like to thank Shoshana Chaya
for her unconditional support and infinite patience. This paper is dedicated
to her with a lot of love.
\subsection{Statement on the use of AI assistants}

The authors used AI assistants (LLM) in the preparation of
this paper, as follows.

\emph{Writing and presentation.} AI assistants were used to help draft and
rephrase exposition, to suggest reorganizations of the material, and to
proofread. Every sentence was reviewed, and where necessary rewritten, by the
authors.

\emph{Discussion.} The authors used AI assistants as a soundboard: to test
formulations, to look for alternative routes through arguments, and to
stress-test claims before committing to them.

\emph{Mathematical content.} All mathematical ideas, statements and proofs in
this paper are the authors' own.

The authors have verified all results and take full responsibility for the
correctness and the content of this paper.

\section{Definitions and notation}\label{sec: def}

\subsection*{Geometric notation}

Fix a right-invariant Riemannian metric $d_G$ on $G$ and a compatible
norm metric $d_{\|\cdot\|}$, for instance one induced by a faithful
matrix representation of $G$. For $R>0$, write
\[
    G_R:=\{g\in G:d_G(e,g)<R\},
    \qquad
    G_R^{\|\cdot\|}:=\{g\in G:d_{\|\cdot\|}(e,g)<R\}.
\]
If $H\leq G$, we use the abbreviations
\[
    H_R:=H\cap G_R,
    \qquad
    H_R^{\|\cdot\|}:=H\cap G_R^{\|\cdot\|}.
\]
We write $G_1$ for the unit ball with respect to $d_G$. All Lipschitz
constants on $G$ are taken with respect to $d_G$ unless another metric
is indicated explicitly.

Let $K<G$ be a maximal compact subgroup and put $X=K\backslash G$. If
$\Gamma<G$ is discrete, the injectivity radius at $p\in X/\Gamma$ is
\begin{equation}
    \on{inj}_{X/\Gamma}(p)
    :=
    \sup\left\{
        r>0:
        \pi_\Gamma\big|_{B_r^X(\widetilde p)}
        \text{ is injective}
    \right\},
\end{equation}
where $\pi_\Gamma:X\to X/\Gamma$ is the quotient map and
$\widetilde p$ is any lift of $p$.

For a discrete subgroup $\Gamma<G$, define its discreteness radius by
\begin{equation}\label{def: disc rad}
    \Ii(\Gamma)
    :=
    \sup\{r>0:\Gamma\cap G_r=\{e\}\}.
\end{equation}
We say that $\Gamma$ is $r$-discrete if
$\Gamma\cap G_r=\{e\}$, or equivalently if $\Ii(\Gamma)\geq r$ (up to
the harmless choice of open or closed balls). We denote the Chabauty
space of discrete subgroups of $G$ by
$\subd=\on{Sub}_d(G)$, and write
\[
    \Gamma^g:=g\Gamma g^{-1}
\]
for the conjugate of $\Gamma$ by $g$.

Given a metric space $(Y,d)$ and nonempty subsets $A,B\subseteq Y$, the
one-sided Hausdorff distance from $A$ to $B$ is
\begin{equation}\label{def: dh+}
    d_H^+(A,B)
    :=
    \sup_{a\in A}d(a,B),
    \qquad
    d(a,B):=\inf_{b\in B}d(a,b).
\end{equation}

\subsection*{Parabolic subgroups}

Our conventions follow those of~\cite[Section~2]{NZ}. Let $G$ be a
connected semisimple Lie group, fix a maximal $\RR$-split torus
$S<G$, and choose a system $\Phi^+$ of positive roots with associated
set $\Delta\subseteq\Phi^+$ of simple roots. For
$\theta\subseteq\Delta$, set
\begin{equation}\label{def: nz setup}
    \mathfrak s_\theta
    :=
    \bigcap_{\alpha\in\theta}\ker\alpha,
\end{equation}
and let $S_\theta<S$ be the connected subgroup with Lie algebra
$\mathfrak s_\theta$. Define
\[
    L_\theta:=Z_G(S_\theta).
\]
Writing $V_\theta$ and $\overline V_\theta$ for the unipotent radicals
of the standard parabolic $P_\theta$ and its opposite
$\overline P_\theta$, respectively, we have
\[
    P_\theta=L_\theta\ltimes V_\theta,
    \qquad
    \overline P_\theta=L_\theta\ltimes\overline V_\theta.
\]
We also write
\[
    L_\theta=M_\theta S_\theta,
\]
where $M_\theta$ is the semisimple part of $L_\theta$. For
$\theta=\varnothing$, we abbreviate
\[
    P:=P_\varnothing=MS\ltimes V,
    \qquad
    \overline P:=\overline P_\varnothing=MS\ltimes\overline V.
\]

\subsection*{Measures and quantitative invariance}

For a metric space $Y$, let $\Mm_1(Y)$ denote the space of Borel
probability measures on $Y$. If $G$ acts measurably on $Y$, we write
$g\nu$ for the pushforward of $\nu$ by $y\mapsto gy$. If $\mu$ is a
Borel probability measure on $G$, then
\[
    \mu*\nu:=\int_G g\nu\,d\mu(g).
\]

We use the following distances on probability measures
$\lambda_1,\lambda_2\in\Mm_1(Y)$:
\begin{enumerate}
    \item Their total variation distance is
    \begin{equation}
        d_{\mathrm{TV}}(\lambda_1,\lambda_2)
        :=
        \sup_{A\subseteq Y\text{ Borel}}
        \abs{\lambda_1(A)-\lambda_2(A)}.
    \end{equation}
    We also write
    $\abs{\lambda_1-\lambda_2}_{\mathrm{TV}}$
    for this quantity.

    \item Their bounded-Lipschitz distance is
    \begin{equation}\label{def: metric on measures}
        W_1^{b}(\lambda_1,\lambda_2)
        :=
        \sup\left\{
            \abs{\lambda_1(\varphi)-\lambda_2(\varphi)}:
            \on{Lip}(\varphi)\leq1,
            \ \norm{\varphi}_\infty\leq1
        \right\}.
    \end{equation}

    \item Given a measurable function $f:Y\to\RR$, define
    \begin{equation}
        W_1^{f}(\lambda_1,\lambda_2)
        :=
        W_1^{b}(f_*\lambda_1,f_*\lambda_2)
        =
        \sup_{\substack{\on{Lip}(\varphi)\leq1\\
                            \norm{\varphi}_\infty\leq1}}
        \abs{\lambda_1(\varphi\circ f)
             -\lambda_2(\varphi\circ f)}.
    \end{equation}

    \item If $\tau$ is a finite measurable partition of $Y$, define
    \begin{equation}
        d_\tau(\lambda_1,\lambda_2)
        :=
        \max_{A\in\sigma(\tau)}
        \abs{\lambda_1(A)-\lambda_2(A)}
        =
        \frac12\sum_{A\in\tau}
        \abs{\lambda_1(A)-\lambda_2(A)}.
    \end{equation}
\end{enumerate}
The metric $W_1^{b}$ metrizes weak convergence on $\Mm_1(Y)$ whenever $Y$
is Polish.

Let $p_t$ be the $K$-bi-invariant heat-kernel probability density on
$G$ with respect to Haar measure. Fix a sufficiently large integer
$N_0$ and set
\begin{equation}\label{def: the measure mu}
    \mu:=p_1^{*N_0}=p_{N_0}.
\end{equation}

\begin{definition}[Almost stationarity]
Let $Y$ be a $G$-space. A probability measure $\nu\in\Mm_1(Y)$ is
\emph{$\eps$-almost stationary} with respect to $\mu$ if
\[
    d_{\mathrm{TV}}(\mu*\nu,\nu)\leq\eps.
\]
\end{definition}

\begin{definition}[Almost invariance]\label{def: almost invariances}
Let $\nu\in\Mm_1(Y)$. We say that $\nu$ is
\begin{enumerate}
    \item \emph{norm-$\eps$-invariant} if
    \[
        d_{\mathrm{TV}}(g\nu,\nu)\leq\eps
        \qquad\text{for every }g\in G_1;
    \]

    \item \emph{weakly $\eps$-invariant} if
    \[
        W_1^{b}(g\nu,\nu)\leq\eps
        \qquad\text{for every }g\in G_1;
    \]

    \item \emph{$(f,\eps)$-almost invariant}, for a measurable
    $f:Y\to\RR$, if
    \[
        W_1^{f}(g\nu,\nu)\leq\eps
        \qquad\text{for every }g\in G_1;
    \]

    \item \emph{$(\tau,\eps)$-almost invariant}, for a finite
    measurable partition $\tau$ of $Y$, if
    \[
        d_\tau(g\nu,\nu)\leq\eps
        \qquad\text{for every }g\in G_1.
    \]
\end{enumerate}
When the acting group is clear, we freely use equivalent phrases such
as ``$G$-weakly $\eps$-invariant'' and ``$f$-$\eps$-almost invariant.''
\end{definition}

\subsection*{Auxiliary analytic notation}

\begin{definition}[Log-Lipschitz functions]\label{def: loglip}
Let $Y$ be a $G$-space, let $f:Y\to(0,\infty)$, and let
$0<c_1\leq c_2<\infty$. We say that $f$ is
\emph{$(c_1,c_2)$-log-Lipschitz} if
\[
    c_1
    \leq
    \frac{f(x)}{f(g^{-1}x)}
    \leq
    c_2
    \qquad
    \text{for every }g\in G_1\text{ and }x\in Y.
\]
\end{definition}

We denote by $W:[-e^{-1},\infty)\to[-1,\infty)$ the principal branch
of the Lambert $W$-function, characterized by
\[
    W(t)e^{W(t)}=t.
\]

We conclude this section with the precise statement of
Theorem~\ref{thm A}.

\begin{theorem}[Precise form of Theorem~\ref{thm A}]\label{thm: qfg}
Let $G$ be a connected simple Lie group with finite centre and
$\mathrm{rank}_\RR G\geq2$, let $X=K\backslash G$, and let
$\Gamma\leq G$ be a discrete subgroup of infinite covolume. Let $\mu$
be the heat-kernel measure fixed in Definition~\ref{def: the measure mu},
and fix $o\in X/\Gamma$. Then there are constants
$c=c(G,\Gamma,o)>0$ and $r_0=r_0(G,\Gamma,o)>0$ such that, for every
$r\geq r_0$:
\begin{enumerate}
    \item there exists $p\in B_r^{X/\Gamma}(o)$ such that
    \[
        \on{inj}_{X/\Gamma}(p)\geq c\log^{(4)}r;
    \]

    \item writing
    \[
        \mu_r
        :=
        \frac{1}{\lfloor r\rfloor}
        \sum_{i=1}^{\lfloor r\rfloor}\mu^{*i},
    \]
    one has
    \begin{equation}
        \mu_r\left(
            \left\{
                g\in G:
                \on{inj}_{X/\Gamma}([g])
                \geq c\log^{(4)}r
            \right\}
        \right)
        \geq c_G,
    \end{equation}
    where $c_G>0$ depends only on $G$ and $[g]$ denotes the image of
    $g$ in $X/\Gamma$.
\end{enumerate}
\end{theorem}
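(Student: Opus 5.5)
We follow the scheme described in the Method subsection: work with the Ces\`aro measure on the single orbit $G\cdot\Gamma\subset\subd$, apply the effective Nevo--Zimmer dichotomy of \cite{QNZ}, rule out the almost-invariant alternative by Lemma~\ref{lem: almost invariance}, and extract injectivity radius from the almost-factor alternative via Lemma~\ref{lem: almost factor}. Concretely, set $n=\lfloor r\rfloor$ and let $\nu_n:=\frac1n\sum_{i=1}^{n}\mu^{*i}*\delta_\Gamma$, the pushforward of $\mu_r$ under $g\mapsto\Gamma^{g}$ (or equivalently a measure on $G/\Gamma$, whose density with respect to $m_{G/\Gamma}$ is a Ces\`aro average of heat kernels). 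A telescoping computation gives $d_{\mathrm{TV}}(\mu*\nu_n,\nu_n)\leq 2/n$, so $\nu_n$ is $\eps$-almost stationary with $\eps=2/n$, with no limit taken.

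Next we feed $\nu_n$ into Theorem~\ref{thm: qnz intro}. In case (a), $\nu_n$ is $(f,\delta)$-almost invariant under $G_1$ with $\delta$ polynomially small in $|\log\eps|^{-1}\approx(\log r)^{-1}$. Because $\nu_n$ has a density built from heat kernels, the ratios $f_{\nu_n,g}$ are bounded above and below uniformly for $g\in G_1$ (heat kernels are log-Lipschitz in the sense of Definition~\ref{def: loglip}) and are Lipschitz, so the hypotheses of Lemma~\ref{lem: almost invariance} (Theorem~\ref{thm: B}) hold. Once $r\geq r_0$ makes $\delta<\eps_0(G,\Gamma)$, that lemma would force $\Gamma$ to be a lattice, contradicting infinite covolume. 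Thus, for $r\geq r_0$, case (b) holds: a $\nu_n$-proportion at least $c_G$ of conjugates $\Gamma^g$ have, at a scale $R\approx\log(1/\eps)$ up to a further logarithmic loss, their intersection with a norm ball approximately contained, in the sense of $d_H^+$, in a conjugate of a proper parabolic $P_\theta$.

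For each such $g$ we apply Lemma~\ref{lem: almost factor}. Conjugating by a suitable element $a$ of the split torus contracts $P_\theta$, and with it every element of $\Gamma^g$ lying close to $P_\theta$ inside the large norm ball, towards $L_\theta$ and then into a small neighbourhood of $e$. If $\Gamma^g$ had a nontrivial element in a norm ball of radius $\rho$, then, since $a$ contracts by only a controlled amount, we would obtain arbitrarily short elements of a discrete group with a fixed discreteness radius. This forces $\Gamma^g\cap G^{\|\cdot\|}_{\rho}=\{e\}$ with $\rho$ roughly logarithmic in $R$. Converting the norm ball to a Riemannian ball costs one more logarithm, since $G^{\|\cdot\|}_\rho\supseteq G_{c\log\rho}$. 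Passing from $\mathcal{I}(\Gamma^g)$ to $\on{inj}_{X/\Gamma}([g])$ loses only a constant factor. Collecting the logarithms gives $\on{inj}([g])\geq c\log^{(4)}r$ on a set of $\mu_r$-measure at least $c_G$, which is item 2.

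Item 1 follows from item 2, because $\mu_r$ is concentrated, up to mass much smaller than $c_G$, on $G_{Cr}$ by Gaussian tails of the heat kernel. Hence some $g$ with $d(e,g)\lesssim r$ satisfies the bound; rescaling $r$ absorbs the constant and places $p=[g]\in B_r(o)$ after translating by a lift of $o$. The main obstacle is the case (b) step: converting the approximate parabolic containment, which is stated only on a set of large measure and at a scale dictated by \cite{QNZ}, into genuine triviality of $\Gamma^g$ on a ball. This requires choosing the contracting element so that the approximation errors, amplified by the expansion of $\Ad(a)$ on the opposite unipotent directions, stay below the discreteness radius of $\Gamma$. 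I would first try taking $a$ of size logarithmic in $R$ and using uniform expansion estimates for $\Ad(a)$ on the norm balls.
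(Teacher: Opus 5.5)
Your architecture (Ces\`aro measure, the dichotomy of Theorem~\ref{thm: qnz intro}, Lemma~\ref{lem: almost invariance} to exclude case (a), Lemma~\ref{lem: almost factor} in case (b), drift for item 1) is the paper's. However, the case (b) step is wrong as you state it.

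You treat Lemma~\ref{lem: almost factor} as a pointwise statement: a conjugate $\Gamma^g$ that is $(R,\delta)$-close to a parabolic would itself be trivial on $G^{\|\cdot\|}_\rho$, by a deterministic contraction. This is false. The integer unipotent upper-triangular group in $SL_3(\RR)$ is an infinite-covolume discrete subgroup lying exactly in $P$ at every scale, with a fixed positive discreteness radius. Yet it meets a fixed bounded norm ball nontrivially.

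Conjugating by a contracting $a$ produces short elements of $a\Gamma^g a^{-1}$. That is a \emph{different} group whose discreteness radius nothing controls, so no contradiction arises. Moreover, no element of the split torus $S$ contracts $L_\theta$ towards $e$, since conjugation by it fixes $S\subseteq L_\theta$ pointwise.

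What Lemma~\ref{lem: almost factor} actually gives concerns the $m$-step Ces\`aro walk \emph{started at} $\Gamma_x$, with $m=\lfloor\log R-\beta\log\log R\rfloor$. That walk gives mass $\geq 1/8$ to $\{\Gamma'\cap G^{\|\cdot\|}_{(\log R)^\beta}=\{e\}\}$. Its proof is measure-theoretic:
\begin{itemize}
\item The almost Furstenberg decomposition (Lemma~\ref{lem: upgraded furstenberg}) writes the walk as a $K$-average of $m^{-1/2}$-almost $P$-invariant measures supported on conjugates by $P^{\|\cdot\|}_{e^m}$. This degrades $(R,\delta)$ to $(R/e^m,\delta e^m)$, which is why the hypothesis $\delta R<c$ is needed.
\item Almost $A_L$-invariance together with a probabilistic pigeonhole gives proximity to a Levi factor for \emph{typical} subgroups (Lemma~\ref{lem: quantitative discreteness}).
\item The passage from Levi to trivial uses finitely many Levi conjugates with trivial intersection (Lemma~\ref{lem: trivial intersection}), not a further contraction.
\end{itemize}

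Consequently the main proof is missing steps. Because the large-injectivity set is reached from the case-(b) set $X_0$ by $m$ further random steps, it is not the case-(b) set itself. To show it has large measure, the paper uses almost stationarity a second time:
\begin{equation*}
d_{\mathrm{TV}}\big((\tfrac1m\textstyle\sum_{i=1}^m\mu^{*i})*\nu_n,\nu_n\big)\leq 2m/n .
\end{equation*}
This yields $\nu_n(\text{good})\geq\eta/16-2m/n$, which is positive since $m\approx\log R\ll n$.

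You also need a uniform lower bound on the discreteness radius of the case-(b) points. The constant in Lemma~\ref{lem: almost factor} depends on $\Ii(\Gamma_x)$, and conjugates of $\Gamma$ can have arbitrarily small discreteness radius. The paper shrinks $X_0$ to $\{\Ii\geq(\eta/2A)^{1/\kappa}\}$ via the Margulis-function bound, Lemma~\ref{lem: margulis functions intro}.

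The QNZ parameter must then be chosen (the Lambert $W$ choice) so that the proximity error times the scale, $C\delta^cR^{1+c'}$, is below that constant.

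Finally, a smaller point: the heat-kernel density is not globally log-Lipschitz, since its log-gradient grows like $r(g)/n$. Before invoking Lemma~\ref{lem: almost invariance}, and before using $f_{\nu_n,g}$ as Lipschitz test functions in Theorem~\ref{thm: qnz intro}, you must replace $\nu_n$ by the truncated measure $\tilde\nu_n$ of Claim~\ref{cl: analytics intro}. This costs $2/\sqrt n$ in total variation.
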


\section{Almost Stationary measures on     \texorpdfstring{$G/\Gamma$}{}}\label{sec: almost stationarity in g mod gamma}
In this section we analyze almost stationary measures on $G/\Gamma$ where $\Gamma$ is some discrete subgroup of $G$ and, conditioned on lemmata which we prove in later sections, prove Theorem \ref{thm: qfg}.

For any $n\in \NN$ we construct our 'model' almost stationary measure on $G/\Gamma$ defined as follows.
\begin{definition}\label{def: nu n}
    For any $n\in \NN$ we define the measure      
    \begin{equation}
        \nu_n=\frac{1}{n}\sum_{i=1}^{n}\mu^{*i}*\delta_{[e]_{G/\Gamma}}.
    \end{equation}
    This measure is absolutely continuous with respect to the Haar measure $m_{G/\Gamma}$ on $G/\Gamma$ since $\mu$ is such ($m_{G/\Gamma}$ is potentially an infinite measure), and so we denote its density with respect to $m_{G/\Gamma}$ by $f_{\nu_n}$.
\end{definition}
In the following claim we readily see that $\nu_n$ is $2/n$-almost stationary. 
\begin{claim}\label{cl: almost stationary}
    The measure $\nu_n$ is $\tfrac{2}{n}$-almost stationary.
\end{claim}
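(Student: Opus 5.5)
The argument is a telescoping computation using only linearity of convolution and the definition of $d_{\mathrm{TV}}$. Write $x_0:=[e]_{G/\Gamma}$ and $\eta_i:=\mu^{*i}*\delta_{x_0}$, so that $\nu_n=\frac1n\sum_{i=1}^n\eta_i$.

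First, convolution with $\mu$ is linear and associative on finite measures:
\[
    \mu*(\mu^{*i}*\delta_{x_0})=\int_G g\,(\mu^{*i}*\delta_{x_0})\,d\mu(g)=\mu^{*(i+1)}*\delta_{x_0}.
\]
This is a Fubini computation: both sides are the pushforward of $\mu^{\otimes(i+1)}$ under $(g_1,\dots,g_{i+1})\mapsto g_1\cdots g_{i+1}x_0$. Hence $\mu*\eta_i=\eta_{i+1}$, and
\[
    \mu*\nu_n=\frac1n\sum_{i=1}^n\eta_{i+1}=\frac1n\sum_{i=2}^{n+1}\eta_i .
\]

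Subtracting, everything cancels except the endpoint terms:
\[
    \mu*\nu_n-\nu_n=\frac1n\bigl(\eta_{n+1}-\eta_1\bigr).
\]
For any Borel $A\subseteq G/\Gamma$ this gives
\[
    \abs{\mu*\nu_n(A)-\nu_n(A)}=\frac1n\abs{\eta_{n+1}(A)-\eta_1(A)}.
\]
Since $\eta_{n+1}$ and $\eta_1$ are probability measures, the right-hand side is at most $\frac1n\max(\eta_{n+1}(A),\eta_1(A))\le\frac1n$. The crude triangle-inequality bound $\frac1n(\eta_{n+1}(A)+\eta_1(A))\le\frac2n$ suffices for the claim. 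Taking the supremum over $A$ gives $d_{\mathrm{TV}}(\mu*\nu_n,\nu_n)\le\frac2n$.

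There is no real obstacle; the only point needing care is the identity $\mu*\eta_i=\eta_{i+1}$. It holds because the $G$-action on $G/\Gamma$ is measurable, so the iterated integrals may be exchanged. Note also that the claim is stated with the sum starting at $i=1$, which changes nothing in the telescoping.
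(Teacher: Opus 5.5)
Your proposal is correct and follows the paper's own argument: telescope $\mu*\nu_n-\nu_n=\frac1n\bigl(\mu^{*(n+1)}*\delta_{[e]}-\mu*\delta_{[e]}\bigr)$ and bound its value on every Borel set by $\frac2n$. Your sharper observation that $\abs{\eta_{n+1}(A)-\eta_1(A)}\le 1$, giving $d_{\mathrm{TV}}(\mu*\nu_n,\nu_n)\le\frac1n$, is also valid.
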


\begin{proof}
    Note that
    \begin{equation}
        \mu*\nu_n=\mu*\left(\sum_{i=1}^{n}\mu^{*i}*\delta_{[e]_{G/\Gamma}}\right)=\frac{1}{n}\sum_{i=2}^{n+1}\mu^{*i}*\delta_{[e]_{G/\Gamma}}
    \end{equation}
    and therefore
    \begin{equation}
        \Delta_n:=\mu*\nu_n-\nu_n=\frac{1}{n}(\mu^{*n+1}*\delta_{[e]_{G/\Gamma}}-\mu*\delta_{[e]_{G/\Gamma}})
    \end{equation}
    is a measure of total mass $|\Delta(G/\Gamma)|$ at most $\tfrac{2}{n}$ which means that for every measurable subset $A\subset X=G/\Gamma$ we have
    \begin{equation}
        |\Delta_n(A)|\leq \frac{2}{n}
    \end{equation}
    so $\nu_n$ is $\frac{2}{n}$-almost stationary.
\end{proof}

\begin{remark}[Root philosophy of the proof of Theorem \ref{thm: qfg}]
Our basic strategy is to find $\nu_n$-generic points $[g]\in G/\Gamma$ where the injectivity radius is large.
We do so by exploiting case $(b)$ of Theorem \ref{thm: qnz intro} followed by an application of Lemma \ref{lem: almost factor}.
\end{remark}
We will analyze the possible structure of the measure $\nu_n$ using Theorem \ref{thm: qnz intro}, which we will re-state below. 
To do so, we need a short preparation, namely the definition of \emph{almost factors}.
\begin{definition}[Almost factor]\label{def: almost factor}
	Let $X$ be a $G$-space, let $W\subset X$ be measurable, $\delta>0$ and let $\Qq$ denote the space of proper parabolic subgroups.

    We say that $W$ has a projective $(R,\delta)$-factor if there is a measurable map $\pi:W\rightarrow \Qq$ such that for every $x\in W$ we have
    \begin{align}
        d_H^+(\stab_G(x)\cap G^{||}_{R},\pi(x))\leq \delta.
    \end{align}
\end{definition}
Now we can re-state \cite[Theorem 1.10]{QNZ}
\begin{theorem}[Almost Nevo-Zimmer]\label{thm: qnz intro}
    Let $G$ be a higher rank noncompact semisimple Lie group $G$ with finite center and let $\mu$ be the heat kernel as in Definition \ref{def: the measure mu}
    Let $\eps>0$ and suppose that $\nu$ is a norm-$\eps$ stationary Borel probability measure on $X$ with respect to $\mu$.
    Also suppose that the action of $G$ on $X$ with $1$-Lipschitz orbit maps. 
    Let $f:X\rightarrow \RR$ be a $1$-Lipschitz function bounded by $1$ in the sup norm. 
    
    Then there exist constants $C,c,c'>0$ depending only on $G$ such that for every $0\leq \eta\leq 1$ one of the following holds.
    \begin{enumerate}
        \item The measure $\nu$ is $G,f,C(|\log \eps|^{-c}+\eta^{1/4})$ almost invariant;
        \item For every $\delta>|\log \eps|^{-1/40}$ there exists a subset $F\subset X$ of $\nu$-measure at least $\eta$ such that $F$ has a $(|\log \eps|^{\delta/(40\dim(G))},C|\log \eps|^{c'\delta/(40\dim(G))}\delta^c)$- projective factor.
    \end{enumerate}
\end{theorem}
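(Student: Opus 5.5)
Theorem~\ref{thm: qnz intro} is a restatement of \cite[Theorem 1.10]{QNZ}, so the proof here is a translation of conventions rather than a new argument. I would check three things in order. First, the hypotheses agree: ``norm-$\eps$ stationary'' here is $d_{\mathrm{TV}}(\mu*\nu,\nu)\leq\eps$, the step measure is the heat kernel $\mu=p_{N_0}$ of Definition~\ref{def: the measure mu}, and the action has $1$-Lipschitz orbit maps. Second, the almost-invariance conclusion in case~(1) is exactly $W_1^{f}(g\nu,\nu)\leq C(|\log\eps|^{-c}+\eta^{1/4})$ for all $g\in G_1$, as in Definition~\ref{def: almost invariances}. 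Third, the almost factor of Definition~\ref{def: almost factor} matches the one in \cite{QNZ}: stabilisers are intersected with the norm ball $G^{\|\cdot\|}_R$, and closeness is measured by the one-sided Hausdorff distance $d_H^+$ of \eqref{def: dh+}.

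The point needing care is the parameters in case~(2). The radius $R=|\log\eps|^{\delta/(40\dim G)}$ and the accuracy $C|\log\eps|^{c'\delta/(40\dim G)}\delta^{c}$ must be obtained from the statement in \cite{QNZ} by specialising its free parameters. I would therefore rename constants so that $C,c,c'$ depend only on $G$. I would also verify that the lower bound $\delta>|\log\eps|^{-1/40}$ is exactly the range in which the \cite{QNZ} statement is non-vacuous.

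For the reader's orientation, and in case a self-contained sketch is wanted, the underlying argument in \cite{QNZ} runs as follows. Fix the observable $f$ and the threshold $\eta$. Consider the conditional measures of $\nu$ along the random walk, paired with the $\mu$-stationary measure on the Furstenberg boundary $G/P$. Almost stationarity with error $\eps$ gives an ``almost boundary map'' valid up to time scale about $|\log\eps|$, because the defect $\eps$ accumulates linearly along the walk while the heat kernel equidistributes exponentially fast.

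The dichotomy then reads as follows. If the almost boundary map is nearly constant on fibres, for all but an $\eta$-proportion of the mass, then Furstenberg--Poisson averaging shows $f\circ g$ and $f$ have nearly equal $\nu$-integrals, which is case~(1). Otherwise, on a set $F$ of mass at least $\eta$, the map $x\mapsto$ (the boundary point attached to $x$) is nonconstant. Its stabiliser data yields a measurable choice $\pi(x)\in\Qq$ such that elements of $\stab_G(x)$ of norm at most $R$ must nearly fix the corresponding flag, i.e.\ lie within $\delta$ of $\pi(x)$. This is case~(2).

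I expect the main obstacle to be this last quantitative step: turning ``nearly fixes a nonconstant boundary measure'' into Hausdorff-closeness to a parabolic subgroup, uniformly over norm balls of radius polynomial in $|\log\eps|$. It needs a Łojasiewicz-type estimate on $G/P$, and that estimate is the source of the exponents $\delta^{c}$ and $1/(40\dim G)$. Since that work is done in \cite{QNZ}, here I would simply invoke it after the dictionary check above.
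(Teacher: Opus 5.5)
Your proposal matches the paper: Theorem~\ref{thm: qnz intro} is not proved here but quoted directly as \cite[Theorem 1.10]{QNZ}. Invoking that result after checking that the notions of almost stationarity, $(f,\eps)$-almost invariance and projective $(R,\delta)$-factor agree with the definitions in this paper is exactly what the paper does. Your orientation sketch of the argument in \cite{QNZ} is not part of this paper, cannot be checked against it, and is not needed, so do not lean on its specifics, such as your account of where the exponents come from.
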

Next we state the lemmata used to deal with each of the cases above, and then we prove the Theorem \ref{thm: qfg} assuming their validity. 
The proofs are postponed to sections \ref{sec: almost invariance} and \ref{sec: almost factor}.

The first lemma deals with case $(a)$ of the theorem.
\begin{lemma}[Almost invariant measure on $G/\Gamma$]\label{lem: almost invariance}
    Assume that $G$ has Kazhdan property $(T)$.
    There exists $\eps_0=\eps_0(\Gamma)>0$ and $0<c_1(G)<c_2(G)$ such that the following holds.
    For every probability measure $\nu$ on $X=G/\Gamma$ assume that the family functions $f_{\nu,g}:X\rightarrow \RR,g\in G_1$ given by $f_{\nu,g}(x)=\frac{\bd \nu}{\bd g\nu}(x)$ is defined. 
    Then for every $\eps<\eps_0$, if
    \begin{enumerate}
        \item $\nu$ is absolutely continuous with respect to $m_{G/\Gamma}$ and for every $g\in G_1$ we have $c_1\leq f_{\nu,g}\leq c_2$ on a subset $X_0\subset X$ of $\nu$ such that $\nu(gX_0)\geq 1-\eps$ for every $g\in G_1$;
        \item For every $g\in G_1$ the function $f_{\nu,g}$ is $L_G$-Lipschitz on $X_0$ for some $L_G>0$ depending only on $G$.
        \item The measure $\nu$ is $f_g$-$\eps$-almost invariant for every $g\in G_1$,
    \end{enumerate}
    then $\Gamma$ is a lattice.
\end{lemma}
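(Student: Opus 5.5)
We pass from the weak hypothesis (3) to $L^1(m_{G/\Gamma})$-almost invariance of the density $F:=f_\nu=\frac{\bd\nu}{\bd m_{G/\Gamma}}$. Then we apply the $L^1$ form of property $(T)$ of Bader--Furman--Gelander--Monod \cite{TL1} to the regular representation of $G$ on $L^1(G/\Gamma,m_{G/\Gamma})$.

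\emph{Step 1: from $W_1^{f_{\nu,g}}$ to total variation.} Fix $g\in G_1$. Since $m_{G/\Gamma}$ is $G$-invariant, $\bd(g\nu)/\bd m_{G/\Gamma}=F\circ g^{-1}$, so $f_{\nu,g}=F/(F\circ g^{-1})$. As in the introduction, take the test function $\phi(t)=1-t^{-1}$, truncated to $[c_1,c_2]$ and rescaled by a constant depending only on $c_1,c_2$ so that it is $1$-Lipschitz and bounded by $1$ there. Then
\[
\nu(\phi\circ f_{\nu,g})-g\nu(\phi\circ f_{\nu,g})=\int_{X_0}\Bigl(1-\tfrac{F\circ g^{-1}}{F}\Bigr)\bigl(F-F\circ g^{-1}\bigr)\,\bd m+O(\eps)
=\int_{X_0}\frac{(F-F\circ g^{-1})^2}{F}\,\bd m+O(\eps).
\]
The error $O(\eps)$ accounts for the complement of $X_0\cap gX_0$, which has $\nu$- and $g\nu$-mass at most $2\eps$ by hypothesis (1). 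The integrand is nonnegative, and hypothesis (3) bounds the left side by $C(c_1,c_2)\eps$. We therefore get a $\chi^2$-type bound
\[
\int_{X_0}\frac{(F-F\circ g^{-1})^2}{F}\,\bd m\lesssim\eps .
\]
By Cauchy--Schwarz against $\int F\,\bd m=1$, this gives $\norm{F-F\circ g^{-1}}_{L^1(X_0)}\lesssim\sqrt\eps$. Adding the $O(\eps)$ mass off $X_0$ yields
\[
\norm{g_*F-F}_{L^1(m_{G/\Gamma})}\le C\sqrt\eps\qquad\text{for all } g\in G_1 .
\]
The Lipschitz hypothesis (2) is used only to justify that the truncated $\phi\circ f_{\nu,g}$ is a legitimate test function and that the integral manipulation above is valid. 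Equivalently, the supremum in (3) is essentially attained on the set $\{F\ge F\circ g^{-1}\}$, as in \eqref{eq: set demonstration}.

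\emph{Step 2: from $G_1$ to a Kazhdan pair.} Since $G_1$ generates $G$ and contains a compact generating set, fix a Kazhdan pair $(Q,\kappa)$ for the $L^1$ version of property $(T)$ with $Q\subseteq G_k$, where $k$ depends only on $G$. The triangle inequality over words of length $k$ gives $\norm{g_*F-F}_1\le kC\sqrt\eps$ for $g\in Q$. The theorem of Bader--Furman--Gelander--Monod (property $(T)$ implies $(T_{L^1})$ for representations on $L^1$ spaces) then says the following: if $\eps<\eps_0$ is so small that $kC\sqrt\eps<\kappa$, the unit vector $F$ is within a controlled distance of the space of $G$-invariant vectors of $L^1(G/\Gamma)$. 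In particular that space is nonzero.

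\emph{Step 3: conclusion.} A nonzero $G$-invariant $L^1$ function on $G/\Gamma$ is a.e. constant, since $G$ acts transitively, and it must be integrable. A nonzero integrable constant forces $m_{G/\Gamma}(G/\Gamma)<\infty$, so $\Gamma$ is a lattice.

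\emph{Main obstacle.} The hard step is Step 2. $(T_{L^1})$ in \cite{TL1} is naturally phrased for $L^1$ of a probability (or $\sigma$-finite) space, possibly via a fixed point in the quotient by constants. We must make sure it applies on the infinite-measure space $G/\Gamma$ and delivers a nonzero invariant vector rather than just closeness to zero. My first attempt would be to pass through the Mazur map $F\mapsto\sqrt F\in L^2(G/\Gamma)$: the $\chi^2$ bound from Step 1 directly gives $\norm{g\sqrt F-\sqrt F}_2\lesssim\sqrt\eps$. Ordinary property $(T)$ in $L^2(G/\Gamma)$ then yields a nonzero invariant $L^2$ function, which again forces $\Gamma$ to be a lattice. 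This route makes $\eps_0$ depend only on the Kazhdan constant and on $c_1,c_2$.
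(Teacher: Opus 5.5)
Your proposal is correct and follows the paper's proof. Step 1 is exactly the paper's claim: the test function $\phi(t)=1-t^{-1}$ gives $\int_{X_0}(F-F\circ g^{-1})^2F^{-1}\,\bd m\lesssim\eps$, and Cauchy--Schwarz then gives the $L^1$ bound. Steps 2--3 are the paper's application of \cite[Theorem A(i)]{TL1}. That theorem is stated for $L^1(\mu)$ with $\mu$ any $\sigma$-finite measure, so the infinite-measure case needs no extra care: if $L^1(G/\Gamma)^G=0$, the quotient is $L^1$ itself and $F$ is an almost invariant unit vector, a contradiction. Your extension from $G_1$ to the Kazhdan set by word length fills a step the paper leaves implicit.

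Your Mazur-map alternative is also valid, because $(\sqrt F-\sqrt{F\circ g^{-1}})^2\le (F-F\circ g^{-1})^2/F$ on $X_0$. It needs only ordinary property $(T)$ on $L^2(G/\Gamma)$, whose Kazhdan pair is uniform, so it yields $\eps_0$ independent of $\Gamma$.
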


The second lemma deals with case $(b)$ of the theorem.

\begin{lemma}[Almost factor for a point in the support]\label{lem: almost factor}
Fix any exponent $\beta$ with $0<\beta<\tfrac{1}{2\on{dim}(G)}$.
Suppose that $x\in G/\Gamma$, $\delta>0$ small and $R>0$ large satisfy that for some proper parabolic subgroup $Q\leq G$ we have
        \begin{equation}
            d_H^+(\stab_G(x)\cap G^{||}_{R},Q)\leq \delta.
        \end{equation}
    Denote $\Gamma=\stab_G(x)$.
    Then there exists $c=c(\Ii(\Gamma),G,\beta)>0$ such that if $\delta R<c$ then for $m=\lfloor\log R-\beta\log\log R\rfloor$ we have
    \begin{equation}
        (\frac{1}{m}\sum_{i=1}^{m}\mu^{(*i)}*\delta_{\Gamma})(\{\Gamma'\cap  G^{||}_{(\log R)^{\beta}}=\{e\}\})\geq 1/8.
    \end{equation}
\end{lemma}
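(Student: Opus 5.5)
We plan to argue by contradiction, pulling any short element of a conjugate $\Gamma'=g\Gamma g^{-1}$ back to $\Gamma$, where the hypothesis $d_H^+(\Gamma\cap G^{||}_R,Q)\le\delta$ applies. We then show that a typical random-walk element $g$ of length $i\le m$ cannot make such an element short. The reason is that $g$ expands every direction of $Q$ transverse to the attracting part of its Cartan projection, while the error coming from $\delta$ stays below $c$ because $\delta R<c$.

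\emph{Step 1: norm control of the walk.} Write $g\sim\mu^{*i}$ in Cartan form $g=k_1ak_2$. By standard large deviation estimates for the heat kernel, with probability $1-o(1)$ we have $\|g\|,\|g^{-1}\|\le e^{\lambda i}$ for a drift constant $\lambda=\lambda(G)$. We normalise $\log$ so that $e^{2\lambda m}\le R(\log R)^{-2\beta}$. This is where the choice $m=\lfloor\log R-\beta\log\log R\rfloor$ enters.

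\emph{Step 2: pull back.} Suppose $\gamma'\in\Gamma'\cap G^{||}_{(\log R)^\beta}$ is nontrivial, and set $\gamma=g^{-1}\gamma'g\in\Gamma$. By Step 1, $\|\gamma\|\le e^{2\lambda i}(\log R)^\beta\le R$, so $\gamma\in\Gamma\cap G^{||}_R$. Hence $\gamma=qh$ with $q\in Q$ and $d_{\|\cdot\|}(h,e)\le\delta$. Conjugating gives $\gamma'=(gqg^{-1})(ghg^{-1})$, and $d(ghg^{-1},e)\lesssim\delta\|g\|^2\le\delta R<c$. Therefore $gqg^{-1}$ is itself short, with norm at most $(\log R)^\beta+O(c)$.

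\emph{Step 3: expansion on $Q$.} We will show that, with probability at least $1/4$ for each $i\in[m/2,m]$, the following holds: every $q\in Q$ with $d(q,e)\ge\Ii(\Gamma)/2$ satisfies $\|gqg^{-1}\|\ge e^{c'i}\Ii(\Gamma)$. Averaging over $i\in[1,m]$ then yields the constant $1/8$. We need $\Ii(\Gamma)$ here because $\gamma\neq e$ forces $d(\gamma,e)\ge\Ii(\Gamma)$, hence $d(q,e)\ge\Ii(\Gamma)-\delta$. The mechanism is that $k_2$ is distributed with a density on $K$ that is bounded below, uniformly for $i\ge1$, since $\mu$ is $K$-bi-invariant. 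Consequently $k_2Qk_2^{-1}$ is, with probability close to $1$, in quantitatively general position (Bruhat-cell transversality with margin $\kappa(G)$) with respect to the parabolic contracted by $a$. On the Lie algebra of a parabolic in such transverse position, $\Ad(a)$ has no nonzero vector whose expansion is $\le1$ beyond a bounded subspace. We plan to write $q=\exp(X)\exp(Y)$ with respect to a Levi decomposition and handle the two factors separately. For the $Y$ component we use the root-space decomposition. For the Levi component we use the fact that, in transverse position, $\Ad(k_2)$ of the Levi Lie algebra meets the non-expanded subspace of $\Ad(a)$ only within $O(\kappa^{-1})$-neighbourhoods, and compactness of these directions gives expansion at least $e^{c'i}$.

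\emph{Step 4: conclusion.} Combining Steps 2 and 3 gives $e^{c'i}\Ii(\Gamma)\lesssim(\log R)^\beta$ for $i\ge m/2\asymp\log R$. This is impossible once $R$ is large and $\beta<1/(2\dim G)$, where the latter condition absorbs the polynomial comparison between norm and expansion exponents, possibly at the cost of shrinking $c'$ by a factor depending on $\dim G$. Taking $c$ small in terms of $\Ii(\Gamma),G,\beta$ finishes the argument.

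The main obstacle is Step 3, specifically the Levi factor of $Q$. Elements of $L_\theta$ commuting with part of $a$ are not expanded, so transversality must be used to ensure that $\Ad(g)$ still has a uniformly expanding component on every nonzero direction of $\Ad(k_2)\Lie(Q)$. If this fails, our fallback is a Zassenhaus-type argument. Under this alternative, the short elements of $\Gamma$ generate a nilpotent subgroup lying near $Q$, and conjugating by a further element of the walk pushes it toward the unipotent radical, where the expansion is clean.
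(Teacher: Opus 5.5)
Step~3 fails, and not for a probabilistic reason. You located the obstacle correctly in the Levi directions, but transversality cannot remove it, because the claimed uniform expansion of $Q$ fails for \emph{every} $g\in G$.

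Write $g=k_1ak_2$ with $a=\exp H$ and $H\in\overline{\aCartan^+}$. For the $\Ad(K)$-invariant norm, $\Ad(a)$ does not expand $\mathfrak p^{-}=\mathfrak g_0\oplus\bigoplus_{\alpha>0}\mathfrak g_{-\alpha}=\Lie(\overline P)$: it acts by $e^{-\alpha(H)}\le 1$ on $\mathfrak g_{-\alpha}$ and trivially on $\mathfrak g_0$. Moreover $\mathfrak p^{-}$ has codimension $\dim\uphor$, while $\dim\Lie(Q)\ge\dim\Lie(P)=\dim\mathfrak g_0+\dim\uphor$. Hence for every $k_2\in K$
\[
\dim\bigl(\Ad(k_2)\Lie(Q)\cap\mathfrak p^{-}\bigr)\;\ge\;\dim\mathfrak g_0\;\ge\;\on{rank}_\RR G\;\ge\;2 .
\]
So there is $0\neq X\in\Lie(Q)$ with $\|\Ad(g)X\|=\|\Ad(a)\Ad(k_2)X\|\le\|X\|$, and $q_t=\exp(tX)\in Q$ satisfies $gq_tg^{-1}=\exp(t\Ad(g)X)$. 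Take $|t|\,\|X\|\asymp\Ii(\Gamma)$ with $\Ii(\Gamma)$ small; the lemma must cover this case, since $c$ may depend on $\Ii(\Gamma)$. This gives $q\in Q$ with $d(q,e)\gtrsim\Ii(\Gamma)$ whose $g$-conjugate stays within $O(\Ii(\Gamma))$ of $e$, uniformly in $i$. There is no gain $e^{c'i}$ at all.

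Since this is a dimension count, no margin in Bruhat position helps. Splitting $q=\exp(X)\exp(Y)$ into Levi and unipotent parts does not help either. The non-expanded directions contain $\Lie(Q\cap k_2^{-1}\overline Pk_2)$, the Lie algebra of an intersection of two parabolics. That intersection contains a maximal split torus and in general mixes the two parts, so expansion of each factor separately says nothing about the product. Your fallback does not apply: Zassenhaus controls only elements in a fixed small neighbourhood of $e$, whereas the elements at stake have norm up to $(\log R)^\beta\to\infty$ and their pull-backs have norm up to $R$.

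What is missing is a mechanism that uses discreteness collectively. Step~2 discards the one piece of information that can exclude such elements: that $q$ is within $\delta$ of an element of $\Gamma$, whose random conjugates are mostly $r$-discrete. It then asks for expansion on all of $Q$. The paper proceeds instead as follows.
\begin{enumerate}
\item The almost Furstenberg decomposition (Lemma~\ref{lem: upgraded furstenberg}, Corollary~\ref{cor: fin cor}) writes $\frac1m\sum_{i=1}^m\mu^{*i}*\delta_\Gamma=\int_K\theta\lambda_\theta\,dm_K(\theta)$. Each $\lambda_\theta$ is norm $m^{-1/2}$-almost $P$-invariant and is carried by conjugates $p\Gamma p^{-1}$ with $p\in P^{||}_{e^m}$. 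These stay $\delta e^m$-close to $Q$ at scale $R/e^m$ because $P\le Q$. By Lemma~\ref{lem: margulis functions}, $\lambda_\theta$ gives most of its mass to $r$-discrete subgroups.
\item Lemma~\ref{lem: quantitative discreteness} iterates conjugation by a central element $a_0$ of the Levi $L$, which contracts the unipotent radical of $Q$. It combines almost $A_L$-invariance with a pigeonhole over the disjoint sets $U_{p_i}$ to produce an $r$-discrete subgroup with too many points in a bounded region. This upgrades ``near $Q$'' to ``near $L$''.
\item Lemma~\ref{lem: factor final} repeats this for the Levi factors $q_iLq_i^{-1}$ with $q_i\in U\subseteq P$, whose intersection is trivial (Lemma~\ref{lem: trivial intersection}). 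The $r$-discreteness then forces $\Gamma'\cap G^{||}_{R/e^m}=\{e\}$ with probability $>1/4$.
\end{enumerate}
The constraint $\beta<1/(2\dim G)$ enters exactly when the pigeonhole count $\overline R^{\dim G}\log\overline R$, with $\overline R=R/e^m\asymp(\log R)^\beta$, is balanced against the invariance defect $m^{-1/2}$. In your Step~4 the constraint is idle: the contradiction $e^{c'i}\Ii(\Gamma)\lesssim(\log R)^\beta$ for $i\asymp\log R$ would hold for any $\beta$. That is a further sign that the proposed mechanism proves too much.
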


Before we begin the proof, we need to recall two more facts from the appendix.
The following lemma follows from Theorem \ref{thm:main} from the appendix, as the heat kernel $\mu$ indeed satisfies its conditions.
\begin{lemma}[Uniform discreteness]\label{lem: margulis functions intro}
	Suppose $\Lambda\leq G$ is a discrete subgroup and for a given $n>0$, consider the measure $\nu_n^{\subd}$ defined by
	\begin{equation}
		\nu_n^{\subd}=\frac{1}{n}\sum_{i=1}^n\mu^{(*i)}*\delta_{\Lambda}.
	\end{equation}
	Then there exists a constant $A=A(\Ii(\Lambda))$ (recall the definition of $\Ii(\Lambda)$ from Definition \ref{def: disc rad}) and $\kappa(G)>0$ such that for every $\eps>0$
	\begin{equation}
		\nu_n^{\subd}(\Gamma:\Ii(\Gamma)\leq \eps)\leq A\eps^\kappa.
	\end{equation}
\end{lemma}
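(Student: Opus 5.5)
The plan is to prove the lemma with a Margulis function on $\subd$. We would build a function $\mathcal F:\subd\to[0,\infty]$ satisfying three properties. First, it controls the discreteness radius from below: $\mathcal F(\Gamma)\geq c_0\,\Ii(\Gamma)^{-\kappa}$ whenever $\Ii(\Gamma)\leq 1$. Second, it is finite and bounded in terms of $\Ii$ alone: $\mathcal F(\Lambda)\leq B(\Ii(\Lambda))$. Third, it satisfies a contraction (drift) inequality
\[
\mu*\mathcal F:=\int_G \mathcal F(g\Gamma g^{-1})\,\bd\mu(g)\;\leq\; a\,\mathcal F(\Gamma)+b,\qquad 0<a<1,\ b=b(G).
\]
We expect this is exactly what Theorem~\ref{thm:main} of the appendix provides for step measures such as the heat kernel $\mu$ of Definition~\ref{def: the measure mu}. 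The only hypotheses to check are that $\mu$ is $K$-bi-invariant, absolutely continuous, and has enough exponential moments. These hold for $p_{N_0}$ once $N_0$ is large, which is presumably why $N_0$ was taken sufficiently large.

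For the construction itself, the candidate is the Eskin--Margulis / Benoist--Quint type function $\mathcal F(\Gamma)=\sum_{\gamma}\|\log\gamma\|^{-\kappa}$. The sum runs over the nontrivial elements $\gamma\in\Gamma$ lying in a fixed small Zassenhaus neighbourhood of $e$. By the Zassenhaus lemma these generate a nilpotent subgroup, so the sum has boundedly many dominant terms; one could also restrict to primitive elements or take a maximum over subgroups. The lower bound $\mathcal F(\Gamma)\geq c_0\,\Ii(\Gamma)^{-\kappa}$ is then immediate from the shortest nontrivial element. The finiteness $\mathcal F(\Lambda)\leq B(\Ii(\Lambda))$ follows from counting lattice points of $\Lambda$ in a fixed ball, which is bounded by a volume argument using $\Ii(\Lambda)$.

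The main obstacle is the drift inequality. For a single vector $v\in\g$ one needs $\int\|\Ad(g)v\|^{-\kappa}\,\bd\mu(g)\leq a\|v\|^{-\kappa}$ for small $\kappa$. Since $\mu$ is $K$-bi-invariant, this reduces to positivity of the top Lyapunov exponent on every nonzero vector of $\g$, together with the usual expansion of $\|\cdot\|^{-\kappa}$ to first order in $\kappa$. The difficulty is interaction: after conjugation, several short elements, or commutator-type combinations of them, can become short simultaneously. We would handle this as Eskin--Margulis do, using exterior powers of the span of the short vectors in the nilpotent Zassenhaus subgroup, and taking $\mathcal F$ to be a weighted sum over the dimensions of these subspaces. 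We treat this step as given by the appendix.

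Granted the three properties, the lemma follows by iteration and Markov's inequality. Iterating the drift inequality gives
\[
\int \mathcal F\,\bd(\mu^{*i}*\delta_\Lambda)\leq a^i\mathcal F(\Lambda)+\frac{b}{1-a}\leq B(\Ii(\Lambda))+\frac{b}{1-a}
\]
for every $i$, and averaging over $1\leq i\leq n$ gives the same bound for $\nu_n^{\subd}(\mathcal F)$. Markov's inequality then yields
\[
\nu_n^{\subd}\bigl(\Ii\leq\eps\bigr)\leq\nu_n^{\subd}\bigl(\mathcal F\geq c_0\eps^{-\kappa}\bigr)\leq c_0^{-1}\Bigl(B(\Ii(\Lambda))+\tfrac{b}{1-a}\Bigr)\eps^{\kappa}.
\]
For $\eps\geq1$ the bound is trivial after enlarging $A$. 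This gives $A=A(\Ii(\Lambda))$ and $\kappa=\kappa(G)$, uniformly in $n$.
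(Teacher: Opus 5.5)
Your proposal is correct and follows the paper's proof: it iterates the drift inequality of Theorem~\ref{thm:main} along the walk, averages over $1\le i\le n$, and applies Markov's inequality, with a constant that depends only on the value of the Margulis function at $\Lambda$. The one difference is the Margulis function itself: the appendix does not use a sum over short elements with exterior-power corrections but simply $\Ii_G(\Gamma)^{-\delta}$, the Gelander--Levit--Margulis function. For that function your first two properties are immediate, and the interaction between several short elements is handled by Zassenhaus' lemma together with the angular estimate, which is uniform over nilpotent subalgebras; also, its drift inequality is proved for $\mu^{*N}$, which the paper absorbs, as you do, by taking the power of the heat kernel large enough.
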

The following is Claim \ref{cl: analytics} from the appendix.
\begin{claim}\label{cl: analytics intro}
    There exists a probability measure $\tilde \nu_n$ on $G/\Gamma$ such that:
    \begin{enumerate}
        \item We have $\abs{\tilde \nu_n-\nu_n}_{TV}\leq 2/n^{1/2}$;
        \item The density $\rho_{\tilde \nu_{n}}$ of $\tilde \nu_n$ with respect to $m_{G/\Gamma}$ is $(c_1,c_2)$-log Lipschitz for some constant $c_1,c_2$ depending only on $G,\Gamma$, namely  \[
    \tilde f_{g,\nu_n}(x) :=\frac{d\tilde \nu_n}{d(g\tilde \nu_n)}(x)
\]
        takes values in $[c_1,c_2]$.
        \item The function $\tilde f_{g,\nu_n}(x)$ is $L_{G,\Gamma}$ Lipschitz for some absolute constant $L_{G,\Gamma}>0$, for every $g\in G_1$.
    \end{enumerate}
\end{claim}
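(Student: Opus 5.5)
The plan is to replace the sharp Ces\`aro average defining $\nu_n$ by an average with smoothly tapered time weights. This costs $O(n^{-1/2})$ in total variation. Once the weights vary slowly, the one-step time shift produced by the parabolic Harnack inequality for the heat kernel can be absorbed by re-indexing. Concretely, $\mu^{*i}=p_{iN_0}$, and on $G/\Gamma$ the density of $\mu^{*i}*\delta_{[e]}$ is $q_i(x\Gamma)=\sum_{\gamma\in\Gamma}p_{iN_0}(x\gamma)$. I would set
\[
\tilde\nu_n=\frac{1}{Z_n}\sum_{i\geq1}w_i\,\mu^{*i}*\delta_{[e]_{G/\Gamma}},\qquad w_i=\begin{cases}\min(1,i/\sqrt n),& i\leq n,\\ e^{-(i-n)/\sqrt n},& i>n,\end{cases}
\]
with $Z_n=\sum_i w_i$. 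Every $\mu^{*i}*\delta_{[e]}$ is a probability measure, and $Z_n=n+O(\sqrt n)$. The weights differ from the indicator of $\{1,\dots,n\}$ in total $\ell^1$-mass $O(\sqrt n)$. Hence $|\tilde\nu_n-\nu_n|_{TV}\lesssim \sqrt n/n$, which with the right constants in the ramps gives item (1).

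For item (2) I would use the parabolic Harnack inequality on $G$ (unimodular, with a left-invariant Laplacian commuting with the $G$-action). For $t\geq t_0$ and $d_G(y,y')\leq 1$ it gives
\[
p_t(y')\leq C_H\,p_{t+N_0}(y),
\]
with $C_H=C_H(G)$. This passes to the $\Gamma$-periodizations $q_i$ termwise, since translating by $g\in G_1$ on the left moves each $x\gamma$ by distance at most $1$ in the invariant metric. Summing gives
\[
\sum_i w_i q_i(g^{-1}x)\leq C_H\sum_i w_i q_{i+1}(x)=C_H\sum_{i\geq2}w_{i-1}q_i(x).
\]
The key point is the choice of weights: they are nondecreasing up to $n$, and afterwards $w_{i-1}/w_i=e^{1/\sqrt n}\leq e$. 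So $w_{i-1}\leq e\,w_i$ for all $i$, and the ratio $\rho(g^{-1}x)/\rho(x)$ is at most $eC_H$. Exchanging $g$ and $g^{-1}$ gives the lower bound, so $\tilde f_{g,\nu_n}\in[c_1,c_2]$.

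The main obstacle I foresee is the first few steps $i$ small. There Harnack at time $iN_0$ is only available for $iN_0\geq t_0$, and the ramp-up makes $w_1$ tiny relative to nothing earlier, so there is no earlier term to absorb into. I would handle this by choosing $N_0$ large, so that $N_0\geq t_0$ and Harnack applies already at $i=1$. This is consistent with the paper's requirement that $N_0$ be ``sufficiently large''. The leftover boundary term in the shifted sum, namely the $i=1$ term on the right, only improves the inequality.

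For item (3) I would invoke the gradient (Li--Yau/Hamilton type) estimate for the heat kernel in the form $|\nabla p_t|(y)\leq C\,p_{t+N_0}(y)$ for $t\geq N_0$. This is again available on symmetric spaces and Lie groups with invariant metric at times bounded below. After periodizing and shifting indices exactly as in item (2), it yields $|\nabla\rho|\leq C'\rho$. Writing $\tilde f_{g,\nu_n}(x)=\rho(x)/\rho(g^{-1}x)$ and differentiating, the gradient of the logarithm is bounded by $2C'$, and the ratio itself is bounded by $c_2$. Together these give a Lipschitz constant $L\leq 2C'c_2$ depending only on $G$ and, through the constants, on $\Gamma$.
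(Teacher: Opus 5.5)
Your proposal is correct, and it takes a genuinely different route from the paper.

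\emph{The paper's route.} The paper (Claims~\ref{cl: clipped-ratio-lipschitz} and~\ref{cl: analytics}) keeps the sharp Ces\`aro weights and modifies each heat kernel in \emph{space}. The Anker--Ji derivative bounds make $\log p_i$ uniformly Lipschitz on a ball $G_{Ai}$, and outside that ball $\log p_i$ is replaced by its McShane extension. The Anker--Ostellari Gaussian bound shows this changes the mass by at most $1/i$. Periodisation and then averaging preserve the log-Lipschitz constant, and the total-variation cost is $\frac1n\sum_{i\le n}\frac1i\le\frac{1+\log n}{n}$.

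\emph{Your route.} You keep every $p_{iN_0}$ exact and modify the weights in \emph{time}. The ratio $p_t(g^{-1}y)/p_t(y)$ is genuinely unbounded, since its log-gradient grows like $r(y)/t$. However, the shifted comparison $p_t(y')\le C_H\,p_{t+N_0}(y)$ is uniform, and the one-index shift is absorbed by $w_{i-1}\le e\,w_i$.

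\emph{What each approach buys.} You need only the Li--Yau Harnack inequality and a local gradient bound, valid under a Ricci lower bound, rather than sharp symmetric-space asymptotics. Your $\tilde\nu_n$ also remains a mixture of the $\mu^{*i}*\delta_{[e]}$, so it is itself almost stationary at a better rate. Indeed, $\mu*\tilde\nu_n-\tilde\nu_n=Z_n^{-1}\sum_{i\ge1}(w_{i-1}-w_i)\,\mu^{*i}*\delta_{[e]}$ with $w_0=0$ and $\sum_i|w_{i-1}-w_i|=2$. This gives $d_{\mathrm{TV}}(\mu*\tilde\nu_n,\tilde\nu_n)\le 1/Z_n\le 1/n$, instead of the $5/\sqrt n$ the paper gets by the triangle inequality. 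The price is an approximation error of order $n^{-1/2}$ rather than $\log n/n$. That is exactly what item (1) allows: your weights in fact give $T/Z_n\le 1/\sqrt n$, with $T$ the tail mass.

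\emph{Small corrections.}
\begin{enumerate}
\item The initial ramp is unnecessary. Both bounds in item (2), and item (3), use only the forward shift, so only the cut-off after $n$ matters.
\item The estimate $|\nabla p_t|\le C\,p_{t+N_0}$ is true, but it is not literally Li--Yau or Hamilton. Li--Yau gives $|\nabla\log u|^2\le\alpha\,\partial_t\log u+C$, which does not close by itself. Derive it from the Souplet--Zhang bound $|\nabla\log u|\le C(1+\log(M/u))$ on $B(y,1)\times[t-N_0/2,t]$: Harnack gives $M\le C\,u(y,t+N_0)$, and $1+\log a\le a$ yields $u(1+\log(M/u))\le M$. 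Alternatively, cite Anker--Ji as the paper does.
\item The termwise displacement bound uses the \emph{right}-invariance of $d_G$ (the paper's convention, which also makes $\Gamma$ act isometrically), via $d_G(g^{-1}x\gamma,x\gamma)=d_G(g^{-1},e)$. It does not come from a left-invariant Laplacian. It is cleanest to run Harnack on $X=K\backslash G$, where $d_X(og^{-1}y,oy)=d_X(og^{-1},o)$.
\item For the same reason, left translation by $g^{-1}$ is not an isometry of $d_{G/\Gamma}$. The constant in item (3) is therefore $c_2C'(1+A_G)$ with $A_G=\sup_{g\in G_1}\operatorname{Lip}(x\mapsto g^{-1}x)$, not $2C'c_2$.
\end{enumerate}
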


\begin{proof}[Proof of Theorem \ref{thm: qfg} assuming Lemmata \ref{lem: almost factor} and \ref{lem: almost invariance}]
    We will prove the theorem under the assumption that $o=[e]$. 
    The general case is similar in treatment. 
    By Claim \ref{cl: almost stationary}, the measure $\nu_n$ (defined in Definition \ref{def: nu n}) is $\frac{2}{n}$-almost stationary.

    Our first step is to replace the measure $\nu_n$ by the measure $\tilde \nu_n$ constructed in Claim \ref{cl: analytics intro} above.
    Since $\nu_n$ is $2/n$ almost stationary and since $\abs{\tilde \nu_n-\nu_n}_{TV}\leq 2/n^{1/2}$ by Claim \ref{cl: analytics intro} part $(a)$, we obtain that $\tilde \nu_n$ is $5/n^{1/2}$-almost stationary. 
    All of the properties we use for $\nu_n$ will transfer to $\tilde \nu_n$ with the added error of $5/n^{1/2}$, which does not change our argument.
    For simplicity, we abuse notation and keep the notation $\nu_n$ and also write $f_{g,\nu_n}$ instead of $\tilde f_{g,\nu_n}$.
    Denote $\eps=5/n^{1/2}$.
    
    We will apply Theorem \ref{thm: qnz intro} to the $\eps$-almost stationary measure $\nu_n$ with the functions $f_{g,\nu_n}$, but the theorem requires the functions to be $1$-Lipschitz and bounded by $1$.
    The functions $f_g$ are only $L_G$-Lipschitz by part $(c)$ of Claim \ref{cl: analytics intro} and bounded by constants depending only on $G$ and $\Gamma$, so by rescaling the function we can ignore them and assume $1$-Lipschitzity and boundedness by $1$.
    
    As we said, for every $g\in G_1$ we can apply Theorem \ref{thm: qnz intro} to the $\eps$-almost stationary measure $\nu_n$ and to the function $f_{g,\nu_n}$ and deduce that for every $\eta>0$ either
    \begin{enumerate}
     \item the measure $\nu_n$ is weak $C(|\log \eps|^{-c}+\eta^{1/4})$-$f_g$ almost invariant, or
        \item for every $\delta>0$ there exists a set $W$ of $\nu_n$ measure at least $\eta$ of points $x\in X$ and proper parabolic subgroups $\{Q_x\}_{x\in W}\leq G$ such that 
        \begin{equation}
            \forall x\in W\text{  } d_H^+(\stab_G(x)\cap G_{|\log \eps|^{\delta/(40\dim(G))}}^{||},Q_x)\leq C|\log \eps|^{c'\delta/(40\dim(G))}\delta^c.
        \end{equation}
    \end{enumerate}
    Fix $\eta=(\eps_0/2C)^4$ and take $n$ large enough such that $|\log \eps|^{-c}< \eps_0/(2C)$.
    Notice that in this case $C(|\log \eps|^{-c}+\eta^{1/4})<\eps_0$.

    \textbf{Case I: for every $f_{\nu,g}, g\in G_1$ and for $\eta$ we are in case $(a)$.}
    We will verify that in this case, the three conditions of Lemma \ref{lem: almost invariance} hold for $\nu=\nu_n$, $X_0=X=G/\Gamma$, $c_1,c_2$ and $L_G$ guaranteed by Claim \ref{cl: analytics intro}.
    
    Claim \ref{cl: analytics intro} parts $(a),(b)$ imply that conditions $(a),(b)$ of Lemma \ref{lem: almost invariance} hold for these parameters.
    In addition, our assumption in this case says exactly that for $\eps=C(|\log \eps|^{-c}+\eta^{1/4})<\eps_0$, condition $(c)$ of the lemma holds as well.
    
    We may therefore apply Lemma \ref{lem: almost invariance} to the measure $\nu_n$ and deduce that $\Gamma$ is a lattice.
    Since this is a contradiction to our assumption in Theorem \ref{thm: qfg}, we must be in case $(b)$ for at last on of the functions $f_{\nu,g},g\in G_1$ which leads to:

    \textbf{Case II: for one of the functions $f_{\nu,g}\in G_1$ we are in case $(b)$.}
    Therefore, for every $\delta>|\log \eps|^{-1/40}$ we obtain $X_0\subset \supp(\nu)$ of measure at least $\eta=(\eps_0/2C)^4$ and proper parabolic subgroups $\{Q_x\}_{x\in X_0}$ such that
    \begin{equation}\label{eq: prox condition}
        d_H^+(\stab_G(x)\cap G_{|\log \eps|^{\delta/(40\dim(G))}}^{||},Q_x)\leq C|\log \eps|^{c'\delta/(40\dim(G))}\delta^c.
    \end{equation}
    By Lemma \ref{lem: margulis functions intro} for $(\eps_0/2)^4/(2AC)=\eta/2A$ we deduce that
    \begin{align}
        \nu_n(\Gamma: \Ii(\Gamma)\leq (\eta/2A)^{1/\kappa})\leq \eta/2.
    \end{align}
    Therefore we can re-define $X_0$ to be the intersection of $X_0$ with $\Ii(\Gamma)\geq (\eta/2A)^{1/\kappa}$ so that $\nu_n(X_0)\geq \eta/2$ and for every $x\in X_0$ apply Lemma \ref{lem: almost factor} for an appropriate choice of $\delta$ as follows.

    Define $R=|\log \eps|^{\delta/(40\dim(G))}$ so that for every $x\in X_0$, a re-formulation of Equation \eqref{eq: prox condition} would be
    \begin{align}
        d_H^+(\stab_G(x)\cap G_R^{||},Q_x)\leq CR^{c'}\delta^c
    \end{align}
    which is exactly the first condition of Lemma \ref{lem: almost factor}.
    For the $c=c(\Gamma,G)$ guaranteed by Lemma \ref{lem: almost factor}, if we make sure that 
    \begin{equation}
        c(G,\Gamma)\geq C\delta^c R^{1+c'}=C\delta^c|\log \eps|^{\delta(1+c')/(40\dim(G))},
    \end{equation}
    then the second condition of Lemma \ref{lem: almost factor} is satisfied. Fix any $\beta\in(0,\tfrac{1}{2\dim(G)})$; we will obtain, for $m=\lfloor\log R-\beta\log\log R\rfloor$, that
    \begin{equation}\label{eq: large inj}
        (\frac{1}{m}\sum_{i=1}^{m}\mu^{(*i)}*\delta_{\Gamma})(\{\Gamma'\cap  G^{||}_{(\log R)^{\beta}}=\{e\}\})\geq 1/8
    \end{equation}
    for every $x\in X_0$.

    We would like to choose $\delta$ so that $C\delta^c|\log \eps|^{\delta(1+c')/(40\dim(G))}<c(\Gamma,G)$.
    We can then take \[\delta=\frac{40c\,\dim(G)}{(1+c')\log|\log\varepsilon|}\,W\!\left(\frac{(1+c')\log|\log\varepsilon|}{40c\,\dim(G)}\left(\frac{c(\Gamma,G)}{C}\right)^{1/c}\right).\]
    so that the inequality indeed holds and in addition, $\delta>|\log \eps|^{-1/40}$ so that this $\delta$ is within the conditions of Theorem \ref{thm: qnz intro}.

    All that is left now is to compute the corresponding $R$.
    Recall
    \begin{align}
        R=|\log \eps|^{\delta/(40\dim(G))}=\exp\left[\frac{c}{1+c'}\,W\!\left(
\frac{(1+c')\log|\log\varepsilon|}{40c\,\dim(G)}
\left(\frac{c(\Gamma,G)}{C}\right)^{1/c}
\right)
\right]
    \end{align}
    so that we get the following estimate for $R$:
    \begin{align}
        R\asymp \left(\frac{\log|\log\varepsilon|}{\log\log|\log\varepsilon|}\right)^{\frac{c}{1+c'}}
    \qquad \text{as } \varepsilon \to 0
    \end{align}
    and therefore, since $\log R=(1+o(1))\,
\frac{c}{1+c'}\,\log\!\big(\tfrac{\log|\log\varepsilon|}{\log\log|\log\varepsilon|}\big)$,
    \begin{align}
        (\log R)^{\beta}\gg_{\Gamma} \Big(\log\big(\tfrac{\log|\log\varepsilon|}{\log\log|\log\varepsilon|}\big)\Big)^{\beta}.
    \end{align}

    However, since the measure $\nu_n$ is $\eps$ almost stationary, since $\Ii(\Gamma)\geq \eps_0/2$ for every $x\in X_0$ and the absolute constants are bounded below by a function of $\eps_0/2$ and $\nu_n(X_0)\geq \eps_0/4$ we obtain using Equation \eqref{eq: large inj} that
    \begin{equation}
        (\frac{1}{m}\sum_{i=1}^{m}\mu^{(*i)}*\delta_{\Gamma_x})(\Gamma':\Gamma'\cap G^{||}_{(\log(\frac{\log|\log\epsilon|}{\log\log|\log\epsilon|}))^{\beta}}=\{e\})>\frac{1}{8}.
    \end{equation}
    Note that since $\nu_n$ is $2/n$ almost stationary, we have $|(\frac{1}{m}\sum_{i=1}^{m}\mu^{(*i)})*\nu_n-\nu_n|_{TV}\leq 2m/n$ and since the above inequality holds for every $x\in X_0$ and $\nu_n(X_0)\geq \eta/2$, we deduce
    \begin{align*}
        \nu_n(\Gamma':\Gamma'\cap G^{||}_{(\log(\frac{\log|\log\epsilon|}{\log\log|\log\epsilon|}))^{\beta}}=\{e\})\geq\\
         (\frac{1}{m}\sum_{i=1}^{m}\mu^{(*i)})*\nu_n(\Gamma':\Gamma'\cap G^{||}_{(\log(\frac{\log|\log\epsilon|}{\log\log|\log\epsilon|}))^{\beta}}=\{e\})-2m/n\geq \\
         \frac{1}{8}\eta/2-2m/n.
    \end{align*}
    Since $m=\lfloor\log R-\beta\log\log R\rfloor\leq \log R\asymp_{\Gamma}\log\big(\tfrac{\log|\log\varepsilon|}{\log\log|\log\varepsilon|}\big)$, we have $2m/n\leq 2\log R/n$, and taking $n$ large enough gives
    \begin{align}\label{eq: def of cg}
        \frac{1}{8}\eta/2-2m/n=\frac{1}{16}(\eps_0/2)^4/2C-2m/n\geq \frac{1}{16}(\eps_0/2)^4/2C-\eps|\log|^{(5)}\eps\geq \frac{1}{32}(\eps_0/2)^4/2C=:c_G
    \end{align}
    for $\eps$ small enough, or in other words $n$ large enough.

    Denote 
    \begin{align}
        \nu_n^G=\frac{1}{n}\sum_{i=1}^{n}\mu^{(*i)}.
    \end{align}
    To conclude the proof, we use the above measure bound with a standard drift argument to prove that 
    \begin{align}
        A_\eps:=\{g\in G:g\Gamma g^{-1}\cap G^{||}_{(\log(\frac{\log|\log\epsilon|}{\log\log|\log\epsilon|}))^{\beta}}=\{e\}\}
    \end{align}
    contains some $g\in G_n=G_{\eps^{-1}}$.
    Note that as we proved that $\nu_n^{\subd}(A_\eps.\Gamma)\geq c_G$ and since $\nu_n^{\subd}$ is the pushforward of $\nu_n^G$ by the orbit map of $G$ on $\subd$ at the point $\Gamma$, we obtain also that $\nu_n(A_\eps.\Gamma)\geq c_G$.
    
    We will need the following standard fact about random walks:
    \begin{claim}[follows from \ref{prop:heat-kernel-cartan-concentration}]\label{cl: drift}
        There exists some $C=C(G,\mu)$ such that 
        \begin{align}
            (\frac{1}{n}\sum_{i=1}^{n}\mu^{*i})(G_{Cn})\rightarrow 1\qquad \text{as $n\rightarrow \infty$.}
        \end{align}
    \end{claim}
    Using the claim, take $n$ large enough so that $|(\frac{1}{n}\sum_{i=1}^{n}\mu^{*i})(G_{Cn})-1|\leq c_G/2$ (recall Equation \eqref{eq: def of cg} for the definition of $c_G$).
    Since $\nu_n^{\subd}(A_\eps.\Gamma)\geq c_G$ we obtain $\nu_n^{\subd}((A_\eps\cap G_{Cn}).\Gamma)\geq c_G/2$.
    In particular, there exists $g\in G_{Cn}$ for which 
    $$g\Gamma g^{-1}\cap G^{||}_{(\log(\frac{\log|\log\epsilon|}{\log\log|\log\epsilon|}))^{\beta}}=\{e\}.$$
    
    Taking $\beta=\frac{1}{3\dim(G)}$ and recalling again that $\eps=5n^{-1/2}$, we obtain $g\in G_{Cn^{1/2}}$ where the injectivity radius (in norm) of $K\backslash G/\Gamma$ is at least
    \begin{align*}
        (\log(\frac{\log|\log\epsilon|}{\log\log|\log\epsilon|}))^{\frac{1}{3\dim(G)}}\geq (\frac{1}{2}\log(\log|\log\epsilon|))^{\frac{1}{3\dim(G)}}
    \end{align*}
    which means that in the Riemannian metric, it is at least $\frac{1}{8\on{dim}(G)}\log^{(4)}(n)$ as desired. 
\end{proof}

\section{Almost invariance and property     \texorpdfstring{$(T)$}{}}\label{sec: almost invariance}
The goal of this section is to prove Lemma \ref{lem: almost invariance}.
For convenience, we recall its statement.
\begin{lemma}[Almost invariant measure on $G/\Gamma$]\label{lem: almost invariance proof}
    Assume that $G$ has Kazhdan property $(T)$.
    There exists $\eps_0=\eps_0(\Gamma)>0$ and $0<c_1(G)<c_2(G)$ such that the following holds.
    For every probability measure $\nu$ on $G/\Gamma$ assume that the family functions $f_{\nu,g}:X\rightarrow \RR,g\in G_1$ given by $f_{\nu,g}(x)=\frac{\bd \nu}{\bd g\nu}(x)$ is defined. 
    Then for every $\eps<\eps_0$, if
    \begin{enumerate}
        \item $\nu$ is absolutely continuous with respect to $m_{G/\Gamma}$ and for every $g\in G_1$ we have $c_1\leq f_{\nu,g}\leq c_2$ on a set $X_0$ such that $\nu(gX_0)\geq 1-\eps$ for every $g\in G_1$;
        \item For every $g\in G_1$ the function $f_{\nu,g}$ is $L_G$-Lipschitz on $X_0$ for some $L_G>0$ depending only on $G$.
        \item The measure $\nu$ is $f_g$-$\eps$-almost invariant for every $g\in G_1$,
    \end{enumerate}
    then $\Gamma$ is a lattice.
\end{lemma}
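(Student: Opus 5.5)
The plan is to convert the weak, observable-based almost invariance in condition (c) into $L^1(m_{G/\Gamma})$-almost invariance of the density of $\nu$, and then apply the $L^1$ form of property $(T)$ of Bader--Furman--Gelander--Monod \cite{TL1} to the regular representation of $G$ on $L^1(G/\Gamma)$.

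\textbf{Step 1: from $W_1^{f_{\nu,g}}$ to total variation.} Fix $g\in G_1$ and write $\rho=\frac{\bd\nu}{\bd m_{G/\Gamma}}$, so that $f_{\nu,g}=\rho/(g\rho)$ with $g\rho(x)=\rho(g^{-1}x)$. Take the test function $\varphi(t)=1-t^{-1}$, truncated outside $[c_1,c_2]$ and rescaled by a constant depending only on $c_1,c_2$ so that it is $1$-Lipschitz and bounded by $1$. Then
\[
\nu(\varphi\circ f_{\nu,g})-g\nu(\varphi\circ f_{\nu,g})=\int (\rho-g\rho)\Bigl(1-\tfrac{g\rho}{\rho}\Bigr)\,\bd m=\int \frac{(\rho-g\rho)^2}{\rho}\,\bd m ,
\]
with the integral taken over $X_0$. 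The contribution of the complement is at most $2\eps$ by condition (a). So condition (c) gives $\int_{X_0}(\rho-g\rho)^2/\rho\,\bd m\lesssim_{c_1,c_2}\eps$. By Cauchy--Schwarz with weight $\rho$,
\[
\int_{X_0}|\rho-g\rho|\le\Bigl(\int_{X_0}\tfrac{(\rho-g\rho)^2}{\rho}\Bigr)^{1/2}\nu(X_0)^{1/2}\lesssim\eps^{1/2}.
\]
Adding the mass outside $X_0$ and $gX_0$, which is $O(\eps)$, yields $\norm{\rho-g\rho}_{L^1(m)}\le C'\eps^{1/2}$ for all $g\in G_1$. Here $C'$ depends only on $c_1,c_2$. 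The Lipschitz condition (b) is only needed to justify that the truncated test function composed with $f_{\nu,g}$ is admissible, or to approximate the indicator of $\{\rho\ge g\rho\}$ as in \eqref{eq: set demonstration}. The choice of $\varphi$ is exactly the one described in the introduction. Since $G_1$ generates $G$ and is a compact generating set, this says $\rho$ is a $(G_1,C'\eps^{1/2})$-almost invariant unit vector in $L^1(G/\Gamma,m)$.

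\textbf{Step 2: property $(T)$ in $L^1$.} By \cite{TL1}, for a group with property $(T)$ and any isometric representation on $L^1$ of a $\sigma$-finite measure space (property $(T_{L^1})$), there is a Kazhdan constant $\kappa>0$ for $G_1$. Consequently a $(G_1,\kappa)$-almost invariant unit vector lies within distance, say, $1/2$ of a nonzero invariant vector. Choose $\eps_0$ with $C'\eps_0^{1/2}<\kappa$. Then $L^1(G/\Gamma,m_{G/\Gamma})$ contains a nonzero $G$-invariant function $h$. Replacing $h$ by $|h|$, which is still invariant, we obtain a $G$-invariant nonzero integrable function. Since $G$ acts transitively on $G/\Gamma$, an invariant measurable function is a.e.\ constant, and a nonzero constant is integrable only if $m_{G/\Gamma}(G/\Gamma)<\infty$. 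Hence $\Gamma$ is a lattice.

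\textbf{Main obstacle.} I expect the hardest point to be the precise form of $L^1$ property $(T)$. The naive statement ``almost invariant vectors imply invariant vectors'' fails in $L^1$ in general. What \cite{TL1} gives is that property $(T)$ is equivalent to $(T_{L^p})$ for $1\le p<2$, phrased via the representation on $L^1$ modulo constants, or on the complement of invariant vectors. I would apply it in the form: if $L^1(G/\Gamma)$ has no invariant vectors, then there are no $(G_1,\kappa)$-almost invariant unit vectors. If only the $L^p$ version for $1<p<2$ is cleanly available, the fallback is to use the bounds $c_1\le f_{\nu,g}\le c_2$ to pass from $\rho$ to $\rho^{1/p}$. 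This uses $|a^{1/p}-b^{1/p}|^p\le|a-b|$, which gives an almost invariant unit vector $\rho^{1/p}\in L^p$ with error $O(\eps^{1/(2p)})$, and then to apply $(T_{L^p})$. The conclusion that an invariant $L^p$ function forces finite volume is the same as above.
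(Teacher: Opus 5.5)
Your proposal is correct and is essentially the paper's proof: the same test function $\varphi(t)=1-t^{-1}$ turns condition (c) into $\int_{X_0}(\rho-g\rho)^2/\rho\,\bd m\lesssim\eps$, Cauchy--Schwarz gives $\norm{\rho-g\rho}_{L^1}\lesssim\eps^{1/2}$, and the $L^1$ form of property $(T)$ from \cite{TL1}, applied to $L^1(G/\Gamma)$ modulo invariant vectors, produces a nonzero invariant integrable function and hence finite covolume. Your direct identity for $\nu(\varphi\circ f_{\nu,g})-g\nu(\varphi\circ f_{\nu,g})$ is a cleaner version of the paper's chain of changes of variables, and it shows that condition (b) plays no real role: the test functions in $W_1^{f}$ live on $\RR$, so admissibility needs no regularity of $f_{\nu,g}$, contrary to your remark. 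Your $L^p$ fallback also works with $p=2$, using the unit vector $\sqrt{\rho}\in L^2(G/\Gamma)$, and would let you use ordinary Kazhdan property $(T)$ instead of \cite{TL1}.
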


\subsection{From weak almost invariance to norm almost invariance}\label{subsec: tau g}

In this subsection we take the first step towards the proof of Lemma \ref{lem: almost invariance proof}, where we construct an almost invariant vector in $L^1(G/\Gamma)$.
The following definition gives again the family $\{f_{\nu,g}\}_{g\in G_1}$ guaranteed by the lemma.
\begin{definition}
    Suppose $\nu$ satisfies the conditions of Lemma \ref{lem: almost invariance proof} and denote $\rho_\nu$ to be the density of $\nu$ with respect to the Haar measure.
    For every $g\in G$ we define the function $f_{\nu,g}:X\rightarrow \RR$ by
    \begin{equation}
        f_{\nu,g}(x)=\frac{\bd \nu}{\bd g\nu}(x)= \frac{\rho_\nu(x)}{\rho_{\nu}(g{^{-1}}x)}.
    \end{equation}
\end{definition}

Before proving the lemma, we need the following claim.
\begin{claim}
    Suppose $\nu$ satisfies the conditions of Lemma \ref{lem: almost invariance proof} for the function $f_g$ for some $g\in G_1$.
    Then 
    \begin{equation}
        \int_{X} \left|\rho_\nu(g^{-1}x)-\rho_\nu(x)\right|d m_{G/\Gamma}(x)\leq \sqrt{(4+L_G)(1+c_1^{-1})\eps}
    \end{equation}
\end{claim}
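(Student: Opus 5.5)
We will test the weak almost invariance against the observable $\phi(t)=1-t^{-1}$ from the introduction, composed with $f_{\nu,g}$. This turns the weak bound into an $L^1$ bound. On $[c_1,c_2]$ the function $\phi$ is bounded by $1+c_1^{-1}$ and has Lipschitz constant $c_1^{-2}$. We therefore rescale it, or truncate it outside $[c_1,c_2]$ and extend it Lipschitzly, so that it is admissible in the definition of $W_1^{f_{\nu,g}}$ up to a constant factor. Hypothesis $(c)$ of Lemma~\ref{lem: almost invariance proof} then gives
\[
\abs{\nu(\phi\circ f_{\nu,g})-g\nu(\phi\circ f_{\nu,g})}\lesssim (1+c_1^{-1})\eps .
\]
The complement of $X_0$ and of $gX_0$ contributes at most $O((1+c_1^{-1})\eps)$, by hypothesis $(a)$ and the boundedness of the truncated observable. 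Hypothesis $(b)$ is used only to ensure that the composed observable is genuinely Lipschitz on $X_0$, which is where the factor $L_G$ enters.

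Next we compute the left side exactly. Write $a=\rho_\nu(x)$ and $b=\rho_\nu(g^{-1}x)$, so that $f_{\nu,g}=a/b$ and $\phi(f_{\nu,g})=(a-b)/a$. Then
\[
\nu(\phi\circ f)-g\nu(\phi\circ f)=\int (a-b)\Big(\tfrac{a-b}{a}\Big)\,dm_{G/\Gamma}=\int \frac{(a-b)^2}{a}\,dm_{G/\Gamma}.
\]
This quantity is nonnegative, so it is bounded by $C_0(4+L_G)(1+c_1^{-1})\eps$. Here $C_0$ is a harmless constant absorbed into the rescaling of $\phi$, and the factor $4+L_G$ accounts for the sup-norm and Lipschitz normalisations.

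Finally we apply Cauchy--Schwarz with weight $a$:
\[
\int\abs{a-b}\,dm\le\Big(\int\frac{(a-b)^2}{a}\,dm\Big)^{1/2}\Big(\int a\,dm\Big)^{1/2},
\]
and $\int a\,dm=\nu(X)=1$. This gives the claimed bound $\sqrt{(4+L_G)(1+c_1^{-1})\eps}$.

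The main obstacle is the first step. The observable $\phi\circ f_{\nu,g}$ must be a legitimate test function, that is, Lipschitz and bounded by $1$ in the correct metric, while the exceptional set off $X_0$ contributes only $O(\eps)$. The constants must be tracked so that exactly the factor $(4+L_G)(1+c_1^{-1})$ appears. We would handle this by truncating $\phi$ to $[c_1,c_2]$, which is legitimate because $W_1^f$ only sees $f_*\lambda$, so the Lipschitz condition is on the real line. We would then pay for the sets where $f$ leaves $[c_1,c_2]$ using $\nu(gX_0)\ge1-\eps$, and for the same set under $g\nu$.
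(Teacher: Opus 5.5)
Your proposal is correct and is essentially the paper's proof. The paper also tests the hypothesis against $\phi(t)=1-t^{-1}$ composed with $f_{\nu,g}$, obtains $\int_{X_0}\bigl(1-\rho_\nu(g^{-1}x)/\rho_\nu(x)\bigr)^2\,d\nu\lesssim\eps$ (which is your $\int_{X_0}(a-b)^2/a\,dm_{G/\Gamma}$), applies Cauchy--Schwarz against the probability measure $\nu$, and pays $O(\eps)$ for $X\setminus X_0$. The only points to tighten are that Cauchy--Schwarz must be applied on $X_0$ alone, since off $X_0$ the integrand $(a-b)^2/a$ is uncontrolled, with $\int_{X\setminus X_0}|a-b|\,dm_{G/\Gamma}\le\nu(X\setminus X_0)+g\nu(X\setminus X_0)\le 2\eps$ bounded directly, and that your accounting of the Lipschitz constant $c_1^{-2}$ of $\phi$ yields the stated bound only up to a constant depending on $c_1$, which is also all the paper's argument (which treats $\phi$ as $1$-Lipschitz) actually delivers.
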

\begin{proof}

    Fix a $1$-Lipschitz function $\phi: [c_1,c_2]\rightarrow \RR$.
    
    By the assumption $\nu(X_0^c)\leq \eps$, the $\eps$-almost invariance guaranteed by assumption $(c)$, the definition of $\rho_\nu$ as the density of $\nu$, the invariance of the Haar measure $m_{G/\Gamma}$ and the fact that on $X_0$, $f_g=\frac{\rho_\nu(x)}{\rho_{\nu}(g^{-1}x)}$, we have
    \begin{align*}
        &\int_{X_0} \phi\left(\frac{\rho_\nu(x)}{\rho_{\nu}(g^{-1}x)}\right)\rho_\nu(x) dm_{G/\Gamma}(x)
        \\&= \int_X \phi(f_g(x))d\nu(x)+O(\norm{\phi}_\infty\eps)
        \\&= \int_X \phi(f_g(gx))d\nu(x)+O(\norm{\phi}_\infty(1+L_G)\eps)
        \\=& \int_{X} \phi\left(f_g(gx)\right)\rho_\nu(x)dm_{G/\Gamma}(x) +O(\norm{\phi}_\infty(1+L_G)\eps) 
        \\=& \int_{X} \phi\left(f_g(gx)\right)\frac{\rho_\nu(x)}{\rho_\nu(gx)}\rho_\nu(gx)dm_{G/\Gamma}(x)   +O(\norm{\phi}_\infty(1+L_G)\eps)
        \\=& \int_{X} \phi\left(f_g(x)\right)\frac{\rho_\nu(g^{-1}x)}{\rho_\nu(x)}\rho_\nu(x)dm_{G/\Gamma}+O(\norm{\phi}_\infty(1+L_G)\eps)
        \\=& \int_{X_0}\phi\left(f_g(x)\right)\frac{\rho_\nu(g^{-1}x)}{\rho_\nu(x)}\rho_\nu(x)dm_{G/\Gamma}+O(\norm{\phi}_\infty(2+L_G)\eps)
        \\=& \int_{X_0}\phi\left(\frac{\rho_\nu(x)}{\rho_{\nu}(g^{-1}x)}\right)\frac{\rho_\nu(g^{-1}x)}{\rho_\nu(x)}\rho_\nu(x)dm_{G/\Gamma}+O(\norm{\phi}_\infty(2+L_G)\eps)
\end{align*}
Subtracting the right hand side from the left we obtain
\begin{align*}
    \int_{X_0}\phi\left(\frac{\rho_\nu(x)}{\rho_{\nu}(g^{-1}x)}\right)\left(1-\frac{\rho_\nu(g^{-1}x)}{\rho_\nu(x)}\right)d\nu \leq O(\norm{\phi}_\infty(2+L_G)\eps).
\end{align*}
In particular, we can take $\phi(t)=1-t^{-1}$, for which we have $\norm{\phi}_\infty\leq 1+c_1^{-1}$
and deduce
\begin{align*}
    \int_{X_0} \left(1-\frac{\rho_\nu(g^{-1}x)}{\rho_\nu(x)}\right)^2d\nu \leq (2+L_G)(1+c_1^{-1})\eps.
\end{align*}
Apply Cauchy-Schwarz inequality for the functions $1$ and $x\mapsto \mathbbm{1}_{X_0}(x)\left|1-\frac{\rho_\nu(g^{-1}x)}{\rho_\nu(x)}\right|$ with the probability measure $\nu$, and obtain 
\begin{align*}
    (2+L_G)(1+c_1^{-1})\eps &\ge 
    \int_{X_0} \left(1-\frac{\rho_\nu(g^{-1}x)}{\rho_\nu(x)}\right)^2d\nu
    \\&=
    \int_{X_0} \left|1-\frac{\rho_\nu(g^{-1}x)}{\rho_\nu(x)}\right|^2d\nu\int 1 d\nu
    \\&\stackrel{CS}{\ge}
    \left(\int_{X_0} \left|1-\frac{\rho_\nu(g^{-1}x)}{\rho_\nu(x)}\right|\cdot 1d\nu\right)^2
    \\&=
    \left(\int_{X_0} \left|\rho_\nu(g^{-1}x)-\rho_\nu(x)\right|d m_{G/\Gamma}(x)\right)^2
    \\&=
    \left(\int_{X} \left|\rho_\nu(g^{-1}x)-\rho_\nu(x)\right|d m_{G/\Gamma}(x)+O\left(\int_{X\setminus X_0}\rho_\nu(x)dm_{G/\Gamma}(x)
    +\int_{X\setminus X_0}\rho_\nu(g^{-1}x)dm_{G/\Gamma}(x)\right)\right)^2
    \\&=
    \left(\int_{X} \left|\rho_\nu(g^{-1}x)-\rho_\nu(x)\right|d m_{G/\Gamma}(x)+O\left(\nu(X\setminus X_0)+g\nu(X\setminus X_0)\right)\right)^2
    \\&=
    \left(\int_{X} \left|\rho_\nu(g^{-1}x)-\rho_\nu(x)\right|d m_{G/\Gamma}(x)+O\left(2\eps \right)\right)^2
\end{align*}
proving that 
\begin{align}
    \int_{X} \left|\rho_\nu(g^{-1}x)-\rho_\nu(x)\right|d m_{G/\Gamma}(x)\leq \sqrt{(4+L_G)(1+c_1^{-1})\eps}
\end{align}
as desired.
\end{proof}

We immediately deduce the following corollary of the claim.
\begin{lemma}\label{lem: almost invariant vector}
    Under the assumptions of Lemma \ref{lem: almost invariance proof} for the family $f_{\nu,g}$, the unit vector $\rho_\nu\in L^1(G/\Gamma)$ is $\sqrt{(4+L_G)(1+c_1^{-1})\eps}$-almost invariant under $G$. 
\end{lemma}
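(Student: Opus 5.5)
This is essentially a restatement of the preceding claim in the language of the regular representation, so the plan is to unwind definitions and quote that claim.

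First we record the setup. We work in $L^1(G/\Gamma)=L^1(G/\Gamma,m_{G/\Gamma})$ with the left-regular isometric action
\[
(\lambda(g)\psi)(x)=\psi(g^{-1}x).
\]
This action is isometric because $m_{G/\Gamma}$ is $G$-invariant. Since $\nu$ is a probability measure absolutely continuous with respect to $m_{G/\Gamma}$, the density satisfies $\rho_\nu\geq 0$ and $\norm{\rho_\nu}_{L^1}=\nu(G/\Gamma)=1$. So $\rho_\nu$ is a unit vector, even when $m_{G/\Gamma}$ is infinite. Note also that $\lambda(g)\rho_\nu=\rho_\nu(g^{-1}\cdot)$ is exactly the density of $g\nu$. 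This is consistent with the formula $f_{\nu,g}=\rho_\nu(x)/\rho_\nu(g^{-1}x)$.

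Almost invariance means the following. For the compact generating set $G_1$, a unit vector $v$ is $\delta$-almost invariant if $\norm{\lambda(g)v-v}\leq\delta$ for all $g\in G_1$; this is the form later fed into property $(T)_{L^1}$ of \cite{TL1}.

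The proof then proceeds in two steps.

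\begin{enumerate}
\item For each fixed $g\in G_1$, the hypotheses of Lemma~\ref{lem: almost invariance proof} hold for the function $f_{\nu,g}$. These hypotheses are boundedness in $[c_1,c_2]$ on $X_0$, the $L_G$-Lipschitz property on $X_0$, $\nu(gX_0)\geq 1-\eps$, and $f_{\nu,g}$-$\eps$-almost invariance. So the preceding claim applies directly and gives
\[
\norm{\lambda(g)\rho_\nu-\rho_\nu}_{L^1}=\int_X|\rho_\nu(g^{-1}x)-\rho_\nu(x)|\,dm_{G/\Gamma}(x)\leq\sqrt{(4+L_G)(1+c_1^{-1})\eps}.
\]
\item The bound in step 1 is uniform in $g$: the constants $L_G,c_1,\eps$ do not depend on $g$. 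Taking the supremum over $g\in G_1$ yields the lemma.
\end{enumerate}

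I do not expect any real obstacle. The only points to check are bookkeeping. One is the convention of left versus inverse action, $g$ versus $g^{-1}$; the estimate is symmetric in this, since $G_1=G_1^{-1}$ for a right-invariant metric ball. The other is that the claim is applied with the same set $X_0$ for each $g$.
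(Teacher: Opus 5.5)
Your proposal is correct and matches the paper, which likewise gets the lemma as an immediate corollary of the preceding claim. Applying that claim to each $g\in G_1$ bounds $\norm{\lambda(g)\rho_\nu-\rho_\nu}_{L^1}=\int_X|\rho_\nu(g^{-1}x)-\rho_\nu(x)|\,dm_{G/\Gamma}(x)$ uniformly in $g$ for the unit vector $\rho_\nu$, and the bookkeeping points you flag (the single set $X_0$ and $G_1=G_1^{-1}$) are fine.
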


\subsection{Property \texorpdfstring{$(T)$}{} in     \texorpdfstring{$L^1(m_{G/\Gamma})$}{} and conclusion of the proof}\label{subsec: property tb}
In this subsection we use the existence of the almost invariant vector constructed in Lemma \ref{lem: almost invariant vector} to prove that $\Gamma$ is a lattice. 
To do that, we have to use a result of Bader, Gelander, Furman and Monod \cite[Theorem A(i)]{TL1}.
First, we need the following definition.
\begin{definition}[Property $(T_B)$]
    Let $B$ be a Banach space.
    A topological group $G$ is said to have property $(T_B)$ if for any continuous linear isometric $G$-representation  $\theta:G\rightarrow O(B)$ the quotient $G$-representation $\theta':G\rightarrow O(B/B^{\theta(G)})$ does not almost have $G$-invariant vectors.
\end{definition}
The following theorem is part $(i)$ of \cite[Theorem A(i)]{TL1}.
\begin{theorem}\label{thm: l1 property t}
    Let $G$ be a locally compact second countable group.
    If $G$ has Kazhdan's property $(T)$ then $G$ has property $(T_B)$ for $L^1(\mu)$ any $\sigma$-finite measure $\mu$.
\end{theorem}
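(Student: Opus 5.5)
The plan is to transfer the problem from $L^1(\mu)$ to $L^2(\mu)$ with the Mazur map, apply the Hilbert-space form of property $(T)$ there, and transfer back. This needs two ingredients. The first is a structural description of isometric representations on $L^1$. The second is the fact that the Mazur map and its inverse are uniformly continuous on unit spheres.

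\textbf{Step 1: Lamperti form.} By the Banach--Lamperti theorem, every linear isometry $U$ of $L^1(\mu)$, for $\mu$ $\sigma$-finite, has the form
\[
Uf(x)=h(x)\,f(T^{-1}x),
\]
where $T$ is a measure-class preserving transformation and $|h|=\frac{d(T_*\mu)}{d\mu}$. The reason is that $L^1$-isometries preserve disjointness of supports. For a continuous isometric representation $\theta$ of $G$, I will apply this to each $\theta(g)$, obtaining a pair $(h_g,T_g)$. The cocycle identity for $(h_g,T_g)$ then holds almost everywhere, for each pair $g_1,g_2$, by uniqueness in Lamperti's description.

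\textbf{Step 2: Mazur map.} Define
\[
M:L^1\to L^2,\qquad M(f)=\operatorname{sign}(f)\,|f|^{1/2}.
\]
It sends the unit sphere of $L^1$ onto the unit sphere of $L^2$. It also satisfies
\[
\norm{M f-Mf'}_2\le C\norm{f-f'}_1^{1/2},\qquad \norm{M^{-1}a-M^{-1}b}_1\le 2\norm{a-b}_2 .
\]
Set
\[
\pi(g)a(x)=\operatorname{sign}(h_g(x))\,|h_g(x)|^{1/2}\,a(T_g^{-1}x).
\]
A direct check shows that $M\theta(g)=\pi(g)M$, and that $\pi(g)$ is a unitary of $L^2(\mu)$. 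So $\pi$ is a unitary representation. It is strongly continuous because $M$ intertwines $\theta$ and $\pi$, and $M$, $M^{-1}$ are uniformly continuous on the unit spheres. Also, $f$ is $\theta$-invariant if and only if $Mf$ is $\pi$-invariant.

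\textbf{Step 3: contradiction argument.} Suppose the quotient $L^1/(L^1)^{\theta(G)}$ almost has invariant vectors. Fix a Kazhdan pair $(Q,\kappa)$ for $G$, with $Q$ compact. Then for every $\epsilon>0$ there is a unit vector $v\in L^1$ with
\[
d\bigl(v,(L^1)^{\theta(G)}\bigr)\ge \tfrac12
\quad\text{and}\quad
\norm{\theta(g)v-v}_1\le \epsilon \text{ for all } g\in Q .
\]
The first condition is obtained by lifting a unit vector of the quotient and renormalising; this is where the factor $\tfrac12$ comes from. Then $\xi=Mv$ is a unit vector in $L^2$ satisfying $\norm{\pi(g)\xi-\xi}_2\le C\epsilon^{1/2}$ for $g\in Q$. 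By the standard quantitative form of property $(T)$, the orthogonal projection $P\xi$ onto $\pi$-invariant vectors satisfies $\norm{\xi-P\xi}_2\le C\epsilon^{1/2}/\kappa$.

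Since $P\xi$ may not be a unit vector, I normalise it to $w=P\xi/\norm{P\xi}_2$. Then $w$ is still $\pi$-invariant and $\norm{\xi-w}_2\le 2C\epsilon^{1/2}/\kappa$. The vector $M^{-1}w$ is $\theta$-invariant, and
\[
\norm{v-M^{-1}w}_1\le 4C\epsilon^{1/2}/\kappa .
\]
This is less than $\tfrac12$ once $\epsilon$ is small, which contradicts $d(v,(L^1)^{\theta(G)})\ge\tfrac12$.

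\textbf{Main obstacle.} I expect the hardest step to be Step 1, and in particular its measurable bookkeeping. Each $\theta(g)$ has a Lamperti form individually, but the maps $g\mapsto(h_g,T_g)$ have to assemble into a genuine cocycle, so that $\pi$ is a homomorphism. This requires the a.e.\ identities to hold for each pair $(g_1,g_2)$; a pointwise measurable action is not needed, since $\pi$ is only required to be a homomorphism into the unitary group. The uniform continuity of $M$ and $M^{-1}$ on spheres is a routine elementary inequality.
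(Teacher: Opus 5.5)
The paper gives no proof of this statement; it quotes \cite[Theorem A(i)]{TL1}. Your route is the standard one for this result: Banach--Lamperti, so that the Mazur map conjugates $\theta$ to a unitary representation, then uniform continuity of $M^{\pm1}$ on spheres, then the quantitative form of property $(T)$ in $L^2$. The $L^2$ part of Step 3 is correct. However, one step in Step 3 does not follow as written.

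\textbf{The lifting step.} You lift a unit vector $\bar v$ of $L^1/(L^1)^{\theta(G)}$ with $\|\theta'(g)\bar v-\bar v\|\le\epsilon$ to some $v_0$. This only gives $d\bigl(\theta(g)v_0-v_0,(L^1)^{\theta(G)}\bigr)\le\epsilon$. In other words, $\|\theta(g)v_0-v_0-w_g\|_1\le\epsilon$ for some invariant $w_g$, and not $\|\theta(g)v_0-v_0\|_1\le\epsilon$.

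Since $L^1$ is not uniformly convex, you cannot lift into a canonical $G$-invariant complement of $(L^1)^{\theta(G)}$, as one would for $1<p<\infty$. This is exactly where $p=1$ needs care.

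The fix is short. Because $\theta(g)$ is an isometry fixing $w_g$, telescoping gives $\|\theta(g)^n v_0-v_0-n w_g\|_1\le n\epsilon$. Hence $n\|w_g\|_1\le 2\|v_0\|_1+n\epsilon$ for every $n$, so $\|w_g\|_1\le\epsilon$ and $\|\theta(g)v_0-v_0\|_1\le 2\epsilon$ for all $g\in Q$, up to an arbitrarily small slack in choosing the lift. With this, your renormalisation (which does give $d(v,(L^1)^{\theta(G)})\ge\tfrac12$) and the rest of Step 3 go through unchanged.

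\textbf{The step you flagged as hardest.} This is not actually an obstacle. Since $M$ is a bijection $L^1\to L^2$, define $\pi(g):=M\theta(g)M^{-1}$ directly. It is then automatically a homomorphism, and it preserves norms because $\|Mf\|_2^2=\|f\|_1$. Lamperti is needed only to show that $\pi(g)$ is linear, so no cocycle bookkeeping is required.

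On a general (non-standard) $\sigma$-finite space, Lamperti provides a regular set isomorphism $\Phi$ of the measure algebra rather than a point map $T$. That suffices: all you use is $M(h\,\Phi f)=\mathrm{sign}(h)\,|h|^{1/2}\,\Phi(Mf)$.
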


We will apply Theorem \ref{thm: l1 property t} for our group $G$ and for $L_1(m_{G/\Gamma})$ to conclude the proof.
\begin{proof}[Proof of Lemma \ref{lem: almost invariance proof}]
	Let $\theta:G\rightarrow L^1(X,m_{G/\Gamma})$ be the standard representation of $G$ by isometries and let $\theta':G\rightarrow L^1(X,m_{G/\Gamma})/L^1(X,m_{G/\Gamma})^{\theta(G)}$ be the quotient representation.
	Since $m_{G/\Gamma}$ is a $\sigma$-finite measure, and since $G$ has property $(T)$, we may apply Theorem \ref{thm: l1 property t} to find $\eps_0'>0$ such that $\theta'$ has no $\eps_0'$-almost invariant vectors.
	We define $\eps_0$ such that $\sqrt{(4+L_G)(1+c_1^{-1})\eps_0}=\eps_0'$, namely $$\eps_0=(4+L_G)^{-1}(1+c_1^{-1})^{-1}\eps_0'^2.$$
	
	Consider the vector $\rho_\nu\in L^1(G/\Gamma)$.
	By Lemma \ref{lem: almost invariant vector} we know that $\rho_\nu$ is $\eps_0'$-almost invariant under $G$ as a vector in $L^1(m_{G/\Gamma})$.
	The assumption $\eps<\eps_0$, our choice of $\eps_0$ and the fact that $\rho_\nu$ is a unit vector, imply that $\rho_\nu$ is close to a nontrivial invariant vector in $L^1(m_{G/\Gamma})$.
	For the representation $\theta$, the $G$-invariant vectors are density functions of invariant measures on $G/\Gamma$, so we obtain in particular that $\Gamma$ is a lattice.
	This proves the statement of the lemma.
\end{proof}

\section{The almost factor case}\label{sec: almost factor}
The goal of this section is to prove Lemma \ref{lem: almost factor}.  
For convenience, we restate it below.
\begin{lemma}[Almost factor for a point in the support]\label{lem: almost factor proof}
Fix any exponent $\beta$ with $0<\beta<\tfrac{1}{2\on{dim}(G)}$.
Suppose that $x\in G/\Gamma$, $\delta>0$ small and $R>0$ large satisfy that for some proper parabolic subgroup $P\leq Q\leq G$ we have
        \begin{equation}
            d_H^+(\stab_G(x)\cap G^{||}_{R},Q)\leq \delta.
        \end{equation}
    Denote $\Gamma=\stab_G(x)$.
    Then there exists $c=c(\Ii(\Gamma),G,\beta)>0$ such that if $\delta R<c$ then for $m=\lfloor\log R-\beta\log\log R\rfloor$ we have
    \begin{equation}
        (\frac{1}{m}\sum_{i=1}^{m}\mu^{(*i)}*\delta_{\Gamma})(\{\Gamma'\cap  G^{||}_{(\log R)^{\beta}}=\{e\}\})\geq 1/8.
    \end{equation}

\end{lemma}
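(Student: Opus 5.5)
We combine the contraction-by-a-Cartan-element mechanism described in the Method section with a random-walk estimate. Two ingredients are needed. The geometric input says that a discrete group lying near a parabolic at scale $R$ has conjugates that are discrete on a large ball. The probabilistic input says that the random walk of length $m$ realizes such conjugating elements with probability bounded below.

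First, reduce to a standard parabolic. Write $Q=kP_\theta k^{-1}$ with $k\in K$. Conjugating by $k$ changes norm balls only by a bounded factor, so we may assume $Q\supseteq P_\theta\supseteq P$ is standard. Choose $a_t=\exp(tH)$ with $H$ in the closed positive Weyl chamber, such that $\alpha(H)\geq 1$ for all $\alpha\in\Phi^+$ not in the span of $\theta$, and $H$ is central in $L_\theta$. Then conjugation $q\mapsto a_{-t}qa_t$ fixes $L_\theta$ and contracts $V_\theta$ at rate $e^{-t}$. The same conjugation expands directions transverse to $Q$ by at most $e^{\Lambda t}$, where $\Lambda$ is the largest root value on $H$. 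This is only partially contracting, so the plan needs more care. Instead we use generic elements $g=k a$ from the random walk, in the Cartan decomposition. Consider any $\gamma\in\Gamma$ whose conjugate $g\gamma g^{-1}$ lies in $G^{\|\cdot\|}_{(\log R)^\beta}$. Since $\|g\|\le e^{Cm}$, we get $\|\gamma\|\le e^{2Cm}(\log R)^\beta$. The choice $m=\lfloor\log R-\beta\log\log R\rfloor$ is made so that $e^{m}(\log R)^{\beta}\approx R$. After fixing constants (possibly rescaling $m$ by the drift constant), this gives $\gamma\in G^{\|\cdot\|}_R$, and so by hypothesis $d(\gamma,Q)\leq\delta$.

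The key step is as follows. By the Cartan concentration of the heat kernel (Proposition \ref{prop:heat-kernel-cartan-concentration} in the appendix), for at least half the steps $i\le m$, with probability at least $1/4$, the walk element has the form $g=k_1a k_2$. Here $a$ is deep in the positive chamber, with $\alpha(\log a)\geq c' i$ for all simple $\alpha$, and $k_2$ lies in a set of directions for which $k_2^{-1}$ of the ``bad'' Bruhat cell is avoided. Quantitatively, the $K$-component is equidistributed, so $k_2Qk_2^{-1}$ is in general position relative to $\overline P$ with probability close to $1$. For such $g$, an element $q\in Q\cap G^{\|\cdot\|}_R$ satisfies one of two things. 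Either $gqg^{-1}$ has a component expanded by at least $e^{c'i}$ relative to its size, which contradicts membership in the small ball, or $q$ lies in a bounded neighbourhood of the compact/contracted part. In the latter case the discreteness radius $\Ii(\Gamma)$ excludes $q\neq e$ once we know $q$ is $\delta$-close to something contracted to within $\Ii(\Gamma)/2$. The error $\delta$ is amplified by at most $\|g\|^2\le e^{2Cm}\lesssim R^{2C}$. This is why we need $\delta R<c$, and why $\beta<1/(2\dim G)$ appears: the expansion constants are controlled by $\dim G$. So whenever $\gamma\neq e$ satisfies $g\gamma g^{-1}\in G^{\|\cdot\|}_{(\log R)^\beta}$, we obtain an element of $\Gamma\setminus\{e\}$ within $\Ii(\Gamma)$ of $e$, which is a contradiction.

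Finally, average over $i\le m$. The good event has probability at least $1/2\cdot 1/4=1/8$ under $\frac1m\sum\mu^{*i}$, and on that event $\Gamma'=g\Gamma g^{-1}$ meets $G^{\|\cdot\|}_{(\log R)^\beta}$ trivially. The main obstacle is the middle step. The difficulty is that conjugation by a generic $g$ does not contract all of $Q$, since the Levi part $L_\theta$ is not contracted. The fix is to show that the portion of $\Gamma$ near $L_\theta$ that survives in the small ball must actually lie near the compact part $M_\theta$ at bounded norm. I would handle it by applying Zassenhaus/Kazhdan--Margulis: elements of $\Gamma\cap G_R$ close to $Q$ generate a group whose small elements are unipotent-like. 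Iterating the contraction on the $V_\theta$ component then shows they land in $G_{\Ii(\Gamma)}$. If this fails, I would instead induct on $\theta$, replacing $Q$ by a smaller parabolic containing the Zariski closure's projection.
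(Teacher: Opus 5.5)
There is a genuine gap, and it is the middle step you flag yourself. Your argument is pointwise. You fix one generic conjugator $g=k_1ak_2$ and claim deterministically that $g\Gamma g^{-1}$ meets $G^{\|\cdot\|}_{(\log R)^{\beta}}$ trivially. Contraction alone cannot give this.

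\begin{itemize}
\item The part of $k_2Qk_2^{-1}$ not expanded by $\Ad(a)$ is essentially $k_2Qk_2^{-1}\cap\overline P$. For generic $k_2$ this still contains a conjugate of $MS$, so your dichotomy ``expanded by $e^{c'i}$, or near the compact/contracted part'' is unjustified.
\item The extreme case shows the failure. If $\Gamma$ lies in a Levi factor $L$ of $Q$, the hypothesis holds with $\delta=0$ for every $R$. All $V_\theta$-components vanish, so iterating the contraction on them does nothing.
\item Your fallback induction on $\theta$ stalls when $\Gamma$ is Zariski dense in $L$, since no smaller parabolic contains $L$.
\item A union bound over $\gamma\in\Gamma\cap G_R$ is also unavailable, because $\Gamma$ may grow exponentially.
\end{itemize}

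The paper never fixes a conjugator. By the almost Furstenberg decomposition (Lemma~\ref{lem: upgraded furstenberg}), $\frac1m\sum_{i\le m}\mu^{*i}=\int_K\theta\tilde\lambda_\theta\,dm_K(\theta)$, where $\tilde\lambda_\theta$ is supported on $P^{\|\cdot\|}_{e^m}$ and is norm $m^{-1/2}$-almost $P$-invariant. Since these conjugators lie in $P\subseteq Q$, the random subgroups of $\lambda_\theta=\tilde\lambda_\theta*\delta_\Gamma$ stay $\delta e^m$-close to $Q$ at scale $R/e^m\geq(\log R)^\beta$ (Corollary~\ref{cor: fin cor}). The rotation $\theta$ drops out because the target event is $K$-invariant.

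The Levi factor is then handled by \emph{averaging}, not by contraction:
\begin{itemize}
\item The $A_L$-contraction you discarded is in fact used. Combined with almost $A_L$-invariance of $\lambda_\theta$, the measurable pigeonhole (Claim~\ref{cl: prob pigeon hole}) and $r$-discreteness of the random conjugates (from the Margulis function, Lemma~\ref{lem: margulis functions}), it upgrades ``near $Q$'' only to ``near $L$'' (Lemma~\ref{lem: quantitative discreteness}).
\item Almost $P$-invariance gives the same conclusion for every Levi $q_iLq_i^{-1}$ with $q_i\in U$.
\item Lemma~\ref{lem: trivial intersection} turns simultaneous proximity to these Levis into proximity to $e$.
\end{itemize}
The restriction $\beta<\frac{1}{2\dim G}$ comes from balancing the pigeonhole count $\approx\overline R^{\dim G}$ against the invariance error $m^{-1/2}$, with $\overline R\approx(\log R)^\beta$ and $m\approx\log R$. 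It does not come from expansion constants.

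The scale bookkeeping also does not close. The statement fixes $m=\lfloor\log R-\beta\log\log R\rfloor$, so $m$ cannot be rescaled by a drift constant. Your bound $\|g\|\leq e^{Cm}$ only gives $\|\gamma\|\lesssim e^{2Cm}(\log R)^{\beta}\approx R^{2C}(\log R)^{\beta(1-2C)}$, which is not $\leq R$ unless $2C\leq 1$. Likewise, amplifying $\delta$ by $e^{2Cm}$ requires $\delta R^{2C}$ to be small, which $\delta R<c$ does not provide. In the paper the scales match because only the $P$-part of the conjugator, of norm at most $e^m$, acts on the proximity to $Q$. This gives error $\delta e^m\leq\delta R/(\log R)^\beta$.

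Finally, your constant $\frac12\cdot\frac14$ is asserted rather than derived. In the paper, $\frac18$ comes from $\lambda_\theta$ giving mass $>\frac14$ to the good event for $\theta$ in a set of $m_K$-measure close to $1$, minus the Margulis-function error.
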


\begin{remark}[Strategy of the proof]
    The general idea of the proof is to first use the almost stationarity of the measure $\frac{1}{m}\sum_{i=1}^{m}\mu^{(*i)}*\delta_{\Gamma}$ to obtain invariance properties of some related measures (defined via the almost Furstenberg decomposition in Lemma \ref{lem: upgraded furstenberg}). 
    Then, we use those invariance properties to take the given proximity to a parabolic group of a random conjugate of $\Gamma$, and upgrade it to a proximity to the trivial subgroup.
    This last part is done in two steps, coming from the two components in Levi decompositions of parabolic groups. 
    In the first, we upgrade the proximity from $Q$ to a Levi component $L$ of $Q$.
    In the second, we assume proximity to a Levi $L$, and upgrade it to proximity to the trivial subgroup.

    Both steps' proofs are inspired by the method of \cite{FG}, as well as the general method.
    However, all steps require a delicate adaptation to the almost invariant case, as can be seen e.g. in the passage from almost stationarity to almost $P$ invariance (almost Furstenberg decomposition), in the measurable manifestation of the pigeonhole principle used in Claim \ref{cl: prob pigeon hole}, and finally in Lemma \ref{lem: trivial intersection} for obtaining almost trivial intersections of different conjugates of Levi groups. 
    All of the above steps have qualitative counterparts in \cite{FG} which make the qualitative discussion somewhat easier to handle.
\end{remark}

\subsection{From parabolic to Levi}
In this subsection we prove Lemma \ref{lem: quantitative discreteness}, which explains how to upgrade random proximity to a parabolic group, to random proximity to (each) one of its Levi components. 

The following definition and notation will be used in the proof of Lemma \ref{lem: quantitative discreteness}.
\begin{definition}\label{def: discreteness}
	Given $r>0$ and a discrete subgroup $\Gamma$ we say that $\Gamma$ is $r$-discrete if $\Gamma\cap G_r=\{e\}$.
\end{definition}

\begin{nota}
    Given $U\subset G$ and a random discrete subgroup $\Gamma$ we define $\Gamma[U]=\{\Gamma:\Gamma\cap U\neq \emptyset\}$.
\end{nota}

We prove the following quantitative version of \cite[Lemma 6.2]{FG}.

\begin{lemma}\label{lem: quantitative discreteness}
    Let $Q$ be a parabolic subgroup of $G$. Let $Q=LN$ be a Levi decomposition of $Q$ and
    let $A_L$ be the center of $L$.
    
    There exist constants $C_0=C_0(G,Q,\|\cdot\|)\geq 1$ and $\delta_0=\delta_0(G,Q,\|\cdot\|)>0$
    such that the following holds.
    Let $R\geq 1$ and $r,\delta'',\delta',\delta,p,c>0$ with $r,\delta''\leq 1$ and $\delta'\leq \delta_0$, 
    such that
     \begin{align*}
        &p<c/8,\quad C_0\delta'<r,\\ 
        &\delta+p<\frac{c/4}{(8(C_0R/r)^{\on{dim}(G)}/c+1)(\log (C_0R\delta''^{-1}))}\\ 
        &C_0\delta'\leq \delta''
    \end{align*}
    and set $R^{*}:=2C_0^{2}R^{2}$. 
    Then any discrete $A_L$ norm $\delta$-invariant random subgroup, represented by a probability measure $\lambda$ for which
    \begin{equation}\label{eq: assumption}
        \lambda(\Gamma:d_H^+(\Gamma^{||}_{R^{*}},Q)\leq \delta')\geq 1-p\text{ and }\lambda(\Gamma\text{ is $r$ discrete})\geq 1-c/8, 
    \end{equation}
    has to also satisfy
    \begin{equation}
        \lambda(\Gamma:d_{H}^+(\Gamma^{||}_{R},L)\leq 2\delta'')\geq 1-c.
    \end{equation}
\end{lemma}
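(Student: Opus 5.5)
We follow the qualitative argument of \cite[Lemma 6.2]{FG} and make each step quantitative. Pick $a\in A_L$ that contracts the unipotent radical $N$: $\Ad(a^{-1})$ contracts $\mathrm{Lie}(N)$ by a factor $e^{-\kappa}$ and centralises $L$. Any $\gamma\in Q$ near the identity factors as $\gamma=\ell u$ with $\ell\in L$, $u\in N$. Conjugation by $a^{-k}$ fixes $\ell$ and shrinks $u$ exponentially, but it may expand directions transverse to $Q$. A point of $\Gamma$ within $\delta'$ of $Q$ therefore stays close to $Q$ at scale $C_0\delta'$ (with the expansion absorbed into $C_0$) once we restrict to the bounded window $R^*=2C_0^2R^2$. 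The idea is this: if $\gamma\in\Gamma^{||}_R$ is at distance more than $2\delta''$ from $L$, then its $N$-component $u$ has size at least about $\delta''$. Running $k$ steps of the contraction, with $k$ between $0$ and $K:=\lceil\log(C_0R\delta''^{-1})/\kappa\rceil$, the conjugates $a^{-k}\gamma a^{k}$ slide along $N$ from size $\gtrsim\delta''$ down to size $\lesssim r$. We will use $A_L$-almost invariance to transport this behaviour back to $\lambda$-typical subgroups and contradict $r$-discreteness.

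\textbf{Step 1: transport.} Let $B$ be the bad event $\{\Gamma:d_H^+(\Gamma^{||}_R,L)>2\delta''\}$, and suppose $\lambda(B)>c$. Since $\lambda$ is norm $\delta$-invariant under $A_L$, iterating over $k\le K$ steps gives $d_{TV}(a^k\lambda,\lambda)\le k\delta$. Hence
\[
\lambda\bigl(\textstyle\bigcap_{k\le K}a^{k}(G\setminus\text{good}_k)\bigr)\ge 1-K(\delta+p)-c/8,
\]
where $\text{good}_k$ is the intersection of the events ``$d_H^+(\Gamma^{||}_{R^*},Q)\le\delta'$'' and ``$r$-discrete''. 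The point is that a typical $\Gamma$ in $B$ has all of its conjugates $a^{-k}\Gamma a^{k}$, for $k\le K$, simultaneously close to $Q$ and $r$-discrete. The hypothesis $\delta+p<\frac{c/4}{(\cdots)\log(C_0R\delta''^{-1})}$ is exactly what makes this union bound cost less than $c/4$.

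\textbf{Step 2: pigeonhole.} Fix such a $\Gamma\in B$ and a witness $\gamma\in\Gamma^{||}_R$. The conjugates $\gamma_k:=a^{-k}\gamma a^{k}$ stay in $G^{||}_{R^*}$, so each lies within $C_0\delta'$ of some $\ell_k u_k\in Q$. Their $L$-parts $\ell_k\approx\ell$ are essentially constant, while the $u_k$ shrink geometrically. Now apply the measurable pigeonhole principle, covering $G^{||}_{R^*}$ by about $(C_0R/r)^{\dim G}$ cells of size $r/2$; this is where the factor $8(C_0R/r)^{\dim G}/c+1$ comes from. It yields two indices $k\neq k'$, or two elements of $\Gamma$, whose conjugates fall in the same cell. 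Their quotient is then a nontrivial element of $a^{-k}\Gamma a^{k}$ lying in $G_r$. Nontriviality follows because the $N$-components differ by at least $\delta''-C_0\delta'>0$, using $C_0\delta'\le\delta''$. This contradicts $r$-discreteness. Here $C_0\delta'<r$ ensures that the $Q$-approximation error does not destroy the smallness of the quotient, and $p<c/8$ handles the failure set of the first hypothesis in \eqref{eq: assumption}.

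\textbf{Main obstacle.} The delicate step is the bookkeeping in Step 2. The contraction must be run for $K$ steps without the transverse, non-$Q$ directions blowing the elements out of the window $R^*$, so the $C_0$-Lipschitz control of $\Ad(a^{\pm1})$ has to be fixed carefully. The pigeonhole must also be done measurably over $\lambda$ (in the style of Claim \ref{cl: prob pigeon hole}) rather than pointwise along the orbit. Combining $c/8$ (discreteness) with $c/8$ ($p$), $c/4$ (transport) and $c/2$ (pigeonhole) shows $\lambda(B)\le c$.
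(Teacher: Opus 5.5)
Your Step 2 does not produce a contradiction, so there is a genuine gap.

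\textbf{The main gap: the pigeonhole is applied to the wrong objects.} The elements $\gamma_k=a^{-k}\gamma a^{k}$ belong to the \emph{different} subgroups $a^{-k}\Gamma a^{k}$. If $\gamma_k$ and $\gamma_{k'}$ land in the same cell, the quotient $\gamma_k\gamma_{k'}^{-1}$ lies in none of these subgroups, and no discreteness is violated. More generally, one element of each conjugate near a prescribed point of $Q$ is consistent with every conjugate being $r$-discrete. So tracking a single element of a single subgroup along the contraction cannot work.

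What is missing is the mechanism of the paper (and of \cite[Lemma 6.2]{FG}): almost invariance transports events about \emph{regions}, not about a fixed subgroup. Set $U_0=(N_{C_0R}\setminus N_{\delta''})L_{C_0R}$ and $U_\ell=\Ad_{a_0^\ell}(U_0)$. One shows inductively (Claim~\ref{cl: machado lemma}) that the same $\lambda$-random subgroup meets $(V_Q)^{||}_{C_0\delta'}U_\ell$ with probability at least $c/2-\ell(\delta+p)$. Each step loses $\delta$ from almost invariance. It loses $p$ from intersecting with the event $d_H^+(\Gamma^{||}_{R^*},Q)\le\delta'$. By uniqueness of the $V_QQ$-decomposition (Claim~\ref{cl: exact decomposition}), that intersection resets the transverse component from $C_0^2\delta'$ back to $C_0\delta'$. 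This reset, not a constant absorbed into $C_0$, is what controls the expansion along $V_Q$ over many steps.

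At the times $p_i=i\lceil\log(C_0R\delta''^{-1})\rceil$, $i=1,\dots,N$, the regions $(V_Q)^{||}_{C_0\delta'}U_{p_i}$ are pairwise disjoint and all lie in the bounded set $G_{C_0\delta'}N_1L_{C_0R}$. Apply the probabilistic pigeonhole (Claim~\ref{cl: prob pigeon hole}) to $N\approx 8k/c$ events, each of probability at least $c/8$. This gives one $r$-discrete $\Gamma_0$ with $k\approx (C_0R/r)^{\on{dim}(G)}$ distinct elements in that bounded set. Only then does packing give two elements of the \emph{same} group whose quotient is a nontrivial element of $\Gamma_0\cap G_r$.

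\textbf{Why your accounting does not match the hypothesis, and two errors in Step 1.} The denominator $(8(C_0R/r)^{\on{dim}(G)}/c+1)\log(C_0R\delta''^{-1})$ is the total number $N\lceil\log(C_0R\delta''^{-1})\rceil$ of contraction steps. The factor $8/c$ comes from the pigeonhole over $\lambda$, and $(C_0R/r)^{\on{dim}(G)}$ from the packing. A single run of $K\approx\log(C_0R\delta''^{-1})$ steps only separates the initial window $U_0$ from the final one. Step 1 also has two errors of its own:
\begin{enumerate}
\item $r$-discreteness is not conjugation invariant. Requiring all $K$ conjugates $a^{-k}\Gamma a^{k}$ to be $r$-discrete therefore costs about $Kc/8$ in a union bound, not $c/8$. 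The correct argument needs discreteness of $\Gamma_0$ alone.
\item Bounding $d_{TV}(a^k\lambda,\lambda)\le k\delta$ separately for each $k$ and summing over $k\le K$ loses order $K^2\delta$. The stated hypothesis does not control this when $\delta''$ is very small. The inductive transport of hitting events loses only $\ell(\delta+p)$ after $\ell$ steps.
\end{enumerate}
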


\begin{proof}
   Let $a_0\in (A_L)_2$ such that $\on{Ad}_{a_0}$ contracts the Lie algebra of $N$ at least by factor $1/2$.
   Denote by $V_Q$ the unipotent radical of the parabolic opposite to $Q$. %

   Since the product map $V_Q\times Q\to G$ is injective with open image containing a neighborhood of the identity, we may choose $C_0\geq 1$ and $\delta_0>0$, depending only on $(G,Q,\|\cdot\|)$, so that:
   \begin{enumerate}
       \item[(a)] $G_{\varepsilon}\subseteq (V_Q)_{C_0\varepsilon}Q_{C_0\varepsilon}$ for every $\varepsilon\leq \delta_0$;
       \item[(b)] $Q_{2S}\subseteq N_{C_0S}L_{C_0S}$ for every $S\geq 1$;
       \item[(c)] $\on{Ad}_{a_0}\big((V_Q)_{\varepsilon}\big)\subseteq (V_Q)_{C_0\varepsilon}$ for every $\varepsilon\leq 1$;
   \end{enumerate}
   shrinking $\delta_0$ if necessary, we also assume $C_0^{2}\delta_0\leq 1$.

   \begin{claim}[regularity of the $V_QQ$-decomposition]\label{cl: exact decomposition}
       Let $0<\varepsilon\leq \delta_0$ and let $g\in G^{||}_{\varepsilon}Q$. Then $g\in V_QQ$ and its (unique) $V_QQ$-decomposition $g=vq$ satisfies $v\in (V_Q)^{||}_{C_0\varepsilon}$.
   \end{claim}
   \begin{proof}
       Write $g=g_{\varepsilon}q'$ with $g_{\varepsilon}\in G_{\varepsilon}^{||}$ and $q'\in Q$.
       By (a), $g_{\varepsilon}=\bar v\,\bar q$ with $\bar v\in (V_Q)_{C_0\varepsilon}$ and $\bar q\in Q_{C_0\varepsilon}$.
       Then $g=\bar v\,(\bar q q')\in V_QQ$, and by uniqueness of the decomposition $v=\bar v\in (V_Q)_{C_0\varepsilon}$.
   \end{proof}

   Suppose for the sake of contradiction that $\lambda(\Gamma:d_H^+(\Gamma_{R}^{||},L)\leq 2\delta'')< 1-c$, so that $\lambda(\Gamma:d_H^+(\Gamma_{R}^{||},L)>2\delta'')\geq c$.
   By the assumptions of the lemma we have
   \begin{align*}
       p<\delta+p<\frac{c/4}{(8(C_0R/r)^{\on{dim}(G)}/c+1)(\log (C_0R\delta''^{-1}))}< c/2
   \end{align*}
   so we may deduce
   \begin{equation}\label{eq: far far away}
       \lambda(\Gamma:d_H^+(\Gamma^{||}_{R},L)> 2\delta'',\ d_H^+(\Gamma^{||}_{R^{*}},Q)\leq \delta')\geq c-p\geq c/2.
   \end{equation}
   Denote $U_0=(N_{C_0R}\setminus N_{\delta''}) \cdot L_{C_0R}$ 
   and for every $i\in \NN$ define $U_i=\on{Ad}_{a_0^i}(U_0)$.
   Since $a_0\in A_L$ is central in $L$, $\on{Ad}_{a_0}$ fixes $L$ pointwise, and therefore 
   \begin{equation}\label{eq: Ui structure}
       U_i=\on{Ad}_{a_0^i}(N_{C_0R}\setminus N_{\delta''})\cdot L_{C_0R}\subseteq N_{C_0R}L_{C_0R}\subseteq Q.
   \end{equation}

   Since $U_i\subseteq Q$ and the product map $V_Q\times Q\to G$ is injective, an element $g\in (V_Q)^{||}_{C_0\delta'}U_i$ has uniquely determined components; in particular its ``$Q$-part'' is well defined and lies in $U_i$, and for $i\neq j$ the sets $(V_Q)^{||}_{C_0\delta'}U_i$ and $(V_Q)^{||}_{C_0\delta'}U_j$ are disjoint as soon as $U_i\cap U_j=\emptyset$.

   The following claim will be used in the proof of Claim \ref{cl: machado lemma}.

\begin{claim}\label{cl: square brackets}
    For every $\Gamma\in \{\Gamma:d_H^+(\Gamma^{||}_{R},L)> 2\delta'',\ d_H^+(\Gamma^{||}_{R^{*}},Q)\leq \delta'\}$ we have
    \begin{align}\label{eq: lower bound on first U}
            \Gamma\in \Gamma[(V_{Q})_{C_0\delta'}^{||}U_0] 
   \end{align}
   so that
   \begin{align}
       \lambda(\Gamma[(V_{Q})_{C_0\delta'}^{||}U_0])\geq c/2
   \end{align}
\end{claim}
   \begin{proof}
        Let $\gamma\in \Gamma_R^{||}$. Since $R\leq R^{*}$, the assumption $d_H^+(\Gamma^{||}_{R^{*}},Q)\leq \delta'$ gives $\gamma\in G_{\delta'}^{||}Q$, so by Claim \ref{cl: exact decomposition} $\gamma$ admits a unique decomposition $\gamma=v_\gamma q_\gamma$ with $v_\gamma\in (V_Q)^{||}_{C_0\delta'}$ and $q_\gamma\in Q$.
        Moreover $q_\gamma=v_\gamma^{-1}\gamma\in Q_{2R}$, so by (b) we may write, again uniquely, $q_\gamma=n_\gamma\ell_\gamma$ with $n_\gamma\in N_{C_0R}$ and $\ell_\gamma\in L_{C_0R}$.

        If $q_\gamma\notin U_0$, we obtain $n_\gamma\in N_{\delta''}$ and therefore $\gamma=v_\gamma n_\gamma\ell_\gamma\in G_{C_0\delta'+\delta''}^{||}L\subseteq G_{2\delta''}^{||}L$, using $C_0\delta'\leq \delta''$.

        If the above were true for every $\gamma\in \Gamma_R^{||}$ we would obtain $d_H^+(\Gamma_R^{||},L)\leq 2\delta''$, contradicting our assumption on $\Gamma$.

        Therefore for some $\gamma\in \Gamma_R^{||}$ we have $q_\gamma\in U_0$, and hence $\gamma=v_\gamma q_\gamma\in (V_Q)_{C_0\delta'}^{||}U_0$, which means that $\Gamma\in \Gamma[(V_{Q})_{C_0\delta'}^{||}U_0]$.
        The implication following this statement in the claim then follows immediately from Equation \eqref{eq: far far away}.
   \end{proof}

    Before we continue with the proof, we need the following four claims.
    \begin{claim}[pigeonhole in probability spaces]\label{cl: prob pigeon hole}
    For every $k>0$, for every $N>8(k-1)/c$ and for every $N$ subsets $A_i,i=1,\dots,N$ of $\lambda$ measure at least $c/8$, there exist distinct $i_1,\dots,i_k\leq N$ such that $A_{i_1},\dots,A_{i_k}$ intersect nontrivially.
   \end{claim}

   \begin{proof}
       Suppose $N'$ is some number for which there exist sets $A_i,i=1,\dots,N'$ each of measure at least $c/8$ but for which no $k$ tuple of the $A_i$'s intersects nontrivially.
       That means that $\sum_{i=1}^{N'}\chi_{A_i}(x)\leq k-1$ for every $x\in X$ and thus also $\int \sum_{i=1}^{N'}\chi_{A_i}(x)\leq k-1$.
       On the other hand, each of the sets $A_i$ has measure at least $c/8$ so $\int\sum_{i=1}^{N'}\chi_{A_i}(x)\geq N'c/8$.
       Combining the above inequalities we obtain that such $N'$ would have to satisfy
       \begin{align*}
           N'c/8\leq k-1\text{ or equivalently }N'\leq 8(k-1)/c.
       \end{align*}
       Since $N$ does not satisfy the above inequality, we obtain a nontrivial intersection as required.
   \end{proof}

    \begin{claim}[disjoint contractions]\label{cl: disjoint contraction}
        For every $N>0$ and  $i = 1, \ldots,N$, set $p_i = i \lceil \log (C_0R\delta''^{-1})\rceil$.
        The sets $U_{p_i}$ are pairwise disjoint for $i=0,1,\dots,N$ and for every $i\geq 1$, $U_{p_i}\subseteq N_{\delta''}L_{C_0R}\subseteq N_1L_{C_0R}$. 
    \end{claim}
    \begin{proof}
        By \eqref{eq: Ui structure}, for every $m > 0$,
        \begin{align*}
            U_m = \Ad_{a_0^m}(N_{C_0R} \setminus N_{\delta''})\cdot L_{C_0R}.
        \end{align*}
        Since $a_0$ contracts at rate $2$ and for $i > 0$, $p_i \geq \lceil \log(C_0R\delta''^{-1})\rceil$, we have
        $$ U_{p_i} \subseteq N_{\delta''}\cdot L_{C_0R}. $$
        By uniqueness of the $NL$-decomposition, $U_0\cap N_{\delta''}L_{C_0R}=\emptyset$, since the $N$-part of every element of $U_0$ lies outside $N_{\delta''}$. 
        Thus $U_0$ is disjoint from $U_{p_i}$ for $i > 0$. As a consequence, since $U_{p_j}=\Ad_{a_0^{p_i}}(U_{p_j-p_i})$ and $\Ad_{a_0^{p_i}}$ is injective, we also find that $U_{p_j}$ is disjoint from $U_{p_i}$ for all $j > i$. This proves the claim.
    \end{proof}

    \begin{claim}\label{cl: machado lemma}
        For every $\ell\in \NN$ we have
        \begin{align*}
            \lambda(\Gamma[(V_Q)^{||}_{C_0\delta'}U_{\ell}])\geq c/2-\ell(\delta+p)
        \end{align*}
    \end{claim}

    \begin{proof}
        This is true for $\ell = 0$ by Claim \ref{cl: square brackets}. Let us prove it by induction on $\ell$. Suppose this is true for some $\ell$.

        We have $\Gamma[(V_Q)^{||}_{C_0\delta'}U_{\ell}] := \{\Gamma : \Gamma \cap (V_Q)^{||}_{C_0\delta'}U_{\ell} \neq \emptyset\}.$

        Let $\Gamma\in \Gamma[(V_Q)^{||}_{C_0\delta'}U_{\ell}]$ and let $\gamma=vu$, with $v\in (V_Q)^{||}_{C_0\delta'}$ and $u\in U_\ell$, be the (unique) $V_QQ$-decomposition of a witness $\gamma\in\Gamma$.
        Then $\gamma^{a_0}=v^{a_0}u^{a_0}$ is again a $V_QQ$-decomposition, with $u^{a_0}\in U_{\ell+1}$ and, by (c), $v^{a_0}\in (V_Q)_{C_0^{2}\delta'}$.
        Therefore
        \begin{align*}
            a_0 \cdot \Gamma[(V_Q)^{||}_{C_0\delta'}U_{\ell}] = \Gamma[\Ad_{a_0}\big((V_Q)^{||}_{C_0\delta'}\big)U_{\ell+1}]\subseteq \Gamma[(V_Q)^{||}_{C_0^{2}\delta'}U_{\ell+1}],
        \end{align*}
        so by norm $\delta$-invariance $\lambda(\Gamma[(V_Q)^{||}_{C_0^{2}\delta'}U_{\ell+1}]) \geq c/2 - \ell(\delta + p) - \delta$, and furthermore
        $$\lambda\big(\Gamma[(V_Q)^{||}_{C_0^{2}\delta'}U_{\ell + 1}] \cap \{\Gamma : d^+_H(\Gamma^{\|}_{R^{*}},Q)\leq \delta'\}\big) \geq c/2 - (\ell + 1)(\delta + p).$$

        We claim that
        \begin{align}
            \Gamma[(V_Q)^{||}_{C_0^{2}\delta'}U_{\ell + 1}] \cap \{\Gamma : d^+_H(\Gamma^{\|}_{R^{*}},Q)\leq \delta'\} \subseteq \Gamma[(V_Q)^{||}_{C_0\delta'}U_{\ell + 1}].
        \end{align}
        Indeed, let $\gamma'=v'u'$ be (the unique $V_QQ$-decomposition of) a witness for the event on the left, so $v'\in (V_Q)_{C_0^{2}\delta'}\subseteq (V_Q)_1$ and $u'\in U_{\ell+1}\subseteq N_{C_0R}L_{C_0R}$.
        Then $\|\gamma'\|\leq 2C_0^{2}R^{2}=R^{*}$, so $\gamma'\in G_{\delta'}^{||}Q$, and by Claim \ref{cl: exact decomposition}, applied to the decomposition $\gamma'=v'u'$, we get $v'\in (V_Q)^{||}_{C_0\delta'}$, while the $Q$-part $u'\in U_{\ell+1}$ is untouched. %
        
        Therefore
        \begin{align}
            \lambda(\Gamma[(V_Q)^{||}_{C_0\delta'}U_{\ell + 1}])\geq c/2-(\ell+1)(\delta+p)
        \end{align}
        completing the induction step and also the proof of the claim.
    \end{proof}

    \begin{claim}[pigeonhole in the group]\label{cl: group pigeon hole}
        If a discrete subgroup $\Gamma$ intersects $G_{C_0\delta'}N_1L_{C_0R}$ in at least $k$ distinct points then $\Ii(\Gamma)\ll_{\delta',1} C_0R/k^{\on{dim}(G)^{-1}}$ and in particular $\Ii(\Gamma)\leq  (C_0R)^{\on{dim}(G)}/k$ 
    \end{claim}

    Let $k$ be large enough such that $C_0R/k^{\on{dim}(G)^{-1}}\leq r$, namely $k=\lceil (C_0R/r)^{\on{dim}(G)}\rceil+1$ and define $N=\floor{8(k-1)/c}+1$. 

    By Claim \ref{cl: disjoint contraction} the sets $U_{p_i}$ are pairwise disjoint for $i=1,\dots,N$ and by Claim \ref{cl: machado lemma}
    \begin{equation}
    \lambda(\Gamma[(V_Q)^{||}_{C_0\delta'}U_{p_i}])\geq c/2-N(\delta+p)\lceil\log (C_0R\delta''^{-1})\rceil
    \end{equation}
and since by our assumption $N(\delta+p)\lceil\log (C_0R\delta''^{-1})\rceil<c/4$ then
\begin{equation}
    \lambda(\Gamma[(V_Q)^{||}_{C_0\delta'}U_{p_i}])\geq c/4\text{ for every $i=1,\dots,N$}.
\end{equation}

Since $\lambda(\Gamma\text{ is not $r$ discrete})\leq c/8$ by our assumption, we thus have
\begin{equation}
    \lambda(A_i):=\lambda(\Gamma[(V_Q)^{||}_{C_0\delta'}U_{p_i}]\text{ and } \Gamma\text{ is $r$ discrete})\geq c/8\text{ for every $i=1,\dots,N$}.
\end{equation}

By Claim \ref{cl: prob pigeon hole} applied to the sets $A_i$ for $i=1,\dots,N$ using the above equation to satisfy the conditions of the claim, there exist distinct $i_1,\dots,i_k$ such that $\bigcap_{j=1}^k A_{i_j}$ is nontrivial, say containing some $\Gamma_0$.

By the uniqueness of the $V_QQ$-decomposition, and by Claim \ref{cl: disjoint contraction}, the sets $(V_Q)^{||}_{C_0\delta'}U_{p_i}$ are pairwise disjoint: if $g\in (V_Q)^{||}_{C_0\delta'}U_{p_i}\cap (V_Q)^{||}_{C_0\delta'}U_{p_j}$ for $i\neq j$, the uniqueness of the decomposition forces the $Q$-part of $g$ to lie in $U_{p_i}\cap U_{p_j}=\emptyset$, a contradiction.
We thus obtain at least $k$ distinct points of $\Gamma_0$ inside $(V_Q)^{||}_{C_0\delta'}N_1L_{C_0R}\subseteq G_{C_0\delta'}N_1L_{C_0R}$.

Using Claim \ref{cl: group pigeon hole} we deduce that $\Ii(\Gamma_0)\leq \max\{(C_0R)^{\on{dim}(G)}/k,C_0\delta'\}<r$ (where the last inequality uses $k> (C_0R/r)^{\on{dim}(G)}$, $r\leq 1$ and the assumption $C_0\delta'<r$) which is a contradiction, since $\Gamma_0$ should be $r$-discrete.
\end{proof}

\subsection{From Levi to the trivial subgroup}
In this subsection we explain how to upgrade from proximity of a random subgroup to a Levi group, to proximity of a random subgroup to the trivial group.

Our strategy is to apply Lemma \ref{lem: quantitative discreteness} several times to a finite collection of  Levi groups whose intersection is trivial (constructed in Lemma \ref{lem: trivial intersection}) and deduce proximity to their trivial intersection.

We first need the following slightly more detailed version of \cite[Lemma 6.3]{FG}.
The proof is identical to the proof appearing in \cite{FG}.

\begin{lemma}\label{lem: trivial intersection}
Let $G$ be a connected simple Lie group with trivial center, equipped with a
left-invariant Riemannian metric $d_G$; write $G_r:=\{g:d_G(e,g)\le r\}$.
Let $Q$ be a proper parabolic subgroup with Levi decomposition $Q=L\ltimes U$.
Then the intersection of all Levi subgroups of $Q$ is trivial. Moreover, there
exist $k\le\dim G$ and $q_1,\dots,q_k\in Q$ with
\[
  \bigcap_{i=1}^k q_iLq_i^{-1}=\{e\},
\]
and there are constants $D>0$, $\delta_0>0$, depending only on $G$, $d_G$ and $Q$,
such that for every $\delta\le\delta_0$ and every $g\in G$ with
$d_G(g,q_iLq_i^{-1})\le\delta$ for $i=1,\dots,k$, one has $g\in G_{D\delta}$.
\end{lemma}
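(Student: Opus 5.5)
We split the statement into an algebraic part (trivial intersection of finitely many Levi subgroups) and a quantitative part (a transversality estimate near the identity).

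\emph{Algebraic part.} Let $H:=\bigcap_{u\in U}uLu^{-1}$ be the intersection of all Levi subgroups of $Q$. Since the Levi subgroups of $Q$ are exactly the $U$-conjugates of $L$, and $Q=LU$ with $U$ normal, conjugating by $q=\ell u$ permutes the Levi subgroups. Hence $H$ is normalized by $Q$.

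\begin{enumerate}
\item First we show $H$ is central in $U$ in a strong sense. Take $h\in H$ and $u\in U$. Both $h$ and $uhu^{-1}$ lie in $L$, so $h^{-1}uhu^{-1}=[h^{-1},u]\in L$. On the other hand $[h^{-1},u]\in U$ because $U$ is normal. Since $L\cap U=\{e\}$, the element $h$ commutes with $U$.
\item So $H\subseteq Z_G(U)$, and $H$ is normalized by $Q$. Pass to the Zariski closure $\overline H$ and its Lie algebra $\mathfrak h$. This is an $\Ad(Q)$-invariant subspace of $\Lie(L)$ that commutes with $\Lie(U)$.
\item Next take a regular element $a$ of the split torus, which lies in $L$. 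The roots occurring in $\Lie(U)$ span the dual of the torus, so any element of $\Lie(L)$ centralizing $\Lie(U)$ has zero torus component. The remaining possibility is a compact or central piece of $L$ acting trivially on $\Lie(U)$. Since $G$ is simple with trivial centre, $\Lie(U)$ generates $\g$ together with $\Lie(\overline U)$. Moreover, $\mathfrak h$ is $\Ad(Q)$-invariant and killed by $\Lie(U)$. From this I expect to conclude that $\mathfrak h$ centralizes an ideal, hence $\mathfrak h=0$.
\item Then $H$ is discrete and centralizes $U$. By the same argument, $H$ lies in the kernel of the action on $U$. Arguing via the adjoint representation, whose kernel is the trivial centre, gives $H=\{e\}$.
\end{enumerate}

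This is essentially \cite[Lemma 6.3]{FG}, and I would follow it.

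To get finitely many conjugates, use the descending chain condition on algebraic subgroups. Choose $q_1,\dots,q_k$ inductively so that each new conjugate $q_iLq_i^{-1}$ strictly decreases the intersection $\bigcap_j q_jLq_j^{-1}$ in (dimension, number of components). The Zariski-closed intersections stabilize, and the stable value must equal $H=\{e\}$. The bound $k\le\dim G$ needs care: dimension drops can occur at most $\dim G$ times, but the final zero-dimensional finite group must also be eliminated. I would handle this by showing the intersection is trivial as soon as its Lie algebra vanishes and it is normalized by a Zariski-dense set. If that fails, I would accept $k\le\dim G+C$ for a constant $C$.

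\emph{Quantitative part.} Set $\mathfrak l_i:=\Ad(q_i)\Lie(L)$. Since $\bigcap_i q_iLq_i^{-1}=\{e\}$, we have $\bigcap_i\mathfrak l_i=0$, so the linear map
\[
\g\to\bigoplus_i \g/\mathfrak l_i
\]
is injective, hence bounded below by some constant $D_0^{-1}$. Near $e$, use the exponential chart. The condition $d_G(g,q_iLq_i^{-1})\le\delta$ means $g=\exp(X)$ with $\mathrm{dist}(X,\mathfrak l_i)\lesssim\delta$ for each $i$, provided $g$ is known to lie in a small neighbourhood of $e$. Injectivity then gives $\|X\|\le D\delta$.

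\emph{Main obstacle.} The hardest step is excluding $g$ far from $e$. The set $\{g:d(g,q_iLq_i^{-1})\le\delta\ \forall i\}$ could a priori contain points near a noncompact end where the subgroups $q_iLq_i^{-1}$ approach one another asymptotically. Since $d_G$ is only left-invariant, these neighbourhoods are not conjugation-stable.

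I would argue by contradiction. Take $\delta_n\to0$ and points $g_n$ with $d(g_n,e)\ge r_0$ lying within $\delta_n$ of every $q_iLq_i^{-1}$. Write $g_n=\ell_{i,n}\epsilon_{i,n}$ with $\ell_{i,n}\in q_iLq_i^{-1}$ and $\epsilon_{i,n}\to e$, and apply the compactness/chain argument above to the products $\ell_{i,n}\ell_{j,n}^{-1}$. If this genuinely fails for noncompact $L$, I would restrict the statement to $g$ in a fixed bounded ball $G_{1}$. That restriction suffices for the later application, where all elements considered lie in a norm ball.
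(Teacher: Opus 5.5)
\textbf{Main gap: the quantitative part is only local.} You prove the bound only for $g$ already in a small neighbourhood of $e$. The lemma asserts it for every $g\in G$, and the ``main obstacle'' you identify is left open. Your contradiction sketch with $g_n$ does not get going, because the $g_n$ may escape to infinity and nothing you have stops this. The fallback of restricting to $g\in G_1$ does not suffice for the application. In the proof of Lemma~\ref{lem: factor final} the statement is applied to every element of $\Gamma\cap G^{||}_R$ with $R\to\infty$. The whole point there is that an element of norm up to $R$ which is $2\delta''$-close to all the $q_iLq_i^{-1}$ must be close to $e$, with constants independent of $R$.

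The missing idea is the paper's. Choose the family explicitly as $q_1=e$ and $q_{1+i}=u_i=\exp v_i$, for a basis $v_1,\dots,v_m$ of $\mathfrak u$. Pick $l_0,l_i\in L$ with $d_G(g,l_0)\le\delta$ and $d_G(g,u_il_iu_i^{-1})\le\delta$. By left-invariance, the element $l_0^{-1}u_il_iu_i^{-1}=(l_0^{-1}l_i)\cdot\big((l_i^{-1}u_il_i)u_i^{-1}\big)\in L\ltimes U$ lies within $2\delta$ of $e$. Its $L$- and $U$-coordinates are therefore $O(\delta)$. This gives $\|\Ad(l_0^{-1})v_i-v_i\|=O(\delta)$ for all $i$, \emph{with no a priori bound on $l_0$}.

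You then need that $\psi=\Ad(\cdot)|_{\mathfrak u}:L\to GL(\mathfrak u)$ is injective, has injective differential, and has closed image, so that it is proper. Properness converts $\|\psi(l_0^{-1})-I\|=O(\delta)$ into $d_G(e,l_0)=O(\delta)$, and hence $d_G(e,g)=O(\delta)$. The global statement comes from exactly this properness, which a transversality argument in the exponential chart cannot see.

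\textbf{Secondary issues in the algebraic part.} Your steps 3--4 (``I expect to conclude'') never show the fact the argument actually needs: an element $h\in L$ acting trivially on $\mathfrak u$ is trivial. Zariski closures and regular elements are not needed for this. The clean argument runs as follows.
\begin{itemize}
\item $L$ preserves both $\mathfrak u$ and $\mathfrak u^-$.
\item The Killing form pairs $\mathfrak u$ and $\mathfrak u^-$ nondegenerately and $\Ad$-invariantly, so $B(\Ad(h)Y-Y,X)=0$ for all $X\in\mathfrak u$, $Y\in\mathfrak u^-$. Hence $\Ad(h)$ also fixes $\mathfrak u^-$.
\item $\mathfrak u\cup\mathfrak u^-$ generates $\g$, so $\Ad(h)=I$ and $h\in Z(G)=\{e\}$.
\end{itemize}

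\textbf{The finite family.} The descending-chain argument is unnecessary and, as you note, does not give $k\le\dim G$. Your own commutator computation from step 1 already does the job. Apply it to $h\in L\cap\bigcap_i u_iLu_i^{-1}$: it shows $h$ commutes with each $u_i=\exp v_i$, so $\Ad(h)|_{\mathfrak u}=I$. Thus the explicit family $\{e,u_1,\dots,u_m\}$ works, with $k=\dim U+1\le\dim G$.
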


\begin{proof}
Write $\mathfrak g,\mathfrak l,\mathfrak u$ for the Lie algebras of $G,L,U$, and
let $\mathfrak u^-$ be the nilradical of the opposite parabolic, so that
$\mathfrak g=\mathfrak u^-\oplus\mathfrak l\oplus\mathfrak u$ is the grading by the
eigenvalues of $\operatorname{ad}(H_0)$ for a suitable $H_0\in\mathfrak l$, with
$\mathfrak l$ the $0$-eigenspace. Since $Z(G)=\{e\}$, the map
$\operatorname{Ad}:G\to GL(\mathfrak g)$ is injective and identifies $G$ with the
real algebraic group $\operatorname{Aut}(\mathfrak g)^\circ$, and $L$ with a
reductive algebraic subgroup. Fix a norm $\|\cdot\|$ on $\mathfrak g$ and the
associated operator norm. Invariance of the Killing form $B$ under
$\operatorname{ad}(H_0)$ makes the pairing
$\mathfrak u\times\mathfrak u^-\to\mathbb R$, $(X,Y)\mapsto B(X,Y)$,
nondegenerate, as $B$ pairs the root subgroup $\mathfrak g_s$ with $\mathfrak g_{-s}$.

\smallskip
\emph{Step 1 (algebraic rigidity).} Let
$\psi:=\operatorname{Ad}(\cdot)|_{\mathfrak u}:L\to GL(\mathfrak u)$.
We claim $\psi$ is injective with injective differential
$\theta:=d\psi_e=\operatorname{ad}(\cdot)|_{\mathfrak u}$.
Indeed, the subalgebra generated by $\mathfrak u\cup\mathfrak u^-$ is a nonzero
ideal, hence all of $\mathfrak g$. If $\lambda\in L$ fixes $\mathfrak u$
pointwise, then for $Y\in\mathfrak u^-$, $X\in\mathfrak u$,
$B(\operatorname{Ad}(\lambda)Y-Y,X)=B(Y,\operatorname{Ad}(\lambda^{-1})X)-B(Y,X)=0$;
as $\operatorname{Ad}(\lambda)Y-Y\in\mathfrak u^-$ and the pairing is nondegenerate,
$\lambda$ also fixes $\mathfrak u^-$, hence all of $\mathfrak g$, so
$\lambda\in Z(G)=\{e\}$. The same computation with $\lambda$ replaced by
$\exp(tX)$, $X\in\mathfrak l$, differentiated at $t=0$, gives injectivity of
$\theta$.

\smallskip
\emph{Step 2 (properness and a local estimate).} Since $\psi$ is an injective
homomorphism of real algebraic groups, its image is closed
\cite[Theorem A]{BaderLeibtag}, and a bijective homomorphism of $\sigma$-compact
locally compact groups onto a Lie subgroup is an isomorphism onto it; thus $\psi$
is a closed embedding, and in particular proper. On the other hand, injectivity
of $\theta$ gives $c_0:=\min_{\|X\|=1}\|\theta(X)\|_{op}>0$, and
$\operatorname{Ad}(\exp X)|_{\mathfrak u}=e^{\theta(X)}$ yields $r_1\in(0,1]$ with
\begin{equation}\label{eq:local}
  \|X\|\le r_1\ \Longrightarrow\ \|\psi(\exp X)-I\|_{op}\ge\tfrac{c_0}{2}\|X\|.
\end{equation}
Let $W:=\exp\{X\in\mathfrak l:\|X\|<r_1\}$. Since
$F:=\{\lambda:\|\psi(\lambda)-I\|_{op}\le\tfrac12\}$ is compact (properness) and
$\|\psi(\cdot)-I\|_{op}$ vanishes on $F$ only at $e\in W$, the number
$\eta_0:=\min\bigl(\tfrac12,\ \inf_{F\setminus W}\|\psi(\cdot)-I\|_{op}\bigr)$ is
positive. If $\|\psi(\lambda)-I\|_{op}\le\eta_0$ then $\lambda\in W$, and
\eqref{eq:local} with $d_G(e,\exp X)\le\|X\|$ gives
\begin{equation}\label{eq:C_L}
  d_G(e,\lambda)\le C_L\,\|\psi(\lambda)-I\|_{op},\qquad C_L:=2/c_0.
\end{equation}

\smallskip
\emph{Step 3 (the family and the finite intersection).} Fix a basis
$v_1,\dots,v_m$ of $\mathfrak u$ with $\|v_i\|\le1$, $m=\dim U$, set
$u_i:=\exp v_i\in U$, and take $q_1:=e$, $q_{1+i}:=u_i$, so $k=m+1\le\dim G$; each
$q_iLq_i^{-1}$ is a Levi factor of $Q$. If $g\in L\cap\bigcap_i u_iLu_i^{-1}$,
then $u_i^{-1}gu_i\in L$ while $g^{-1}(u_i^{-1}gu_i)\in U$, so by $L\cap U=\{e\}$
the element $g$ commutes with $u_i$; hence $\operatorname{Ad}(g)v_i=v_i$ for all
$i$, so $\psi(g)=I$ and $g=e$ by Step~1. Thus the finite intersection is trivial,
and a fortiori so is the intersection of all Levi subgroups.

\smallskip
\emph{Step 4 (quantitative statement).} As $d_G$ is left-invariant and the maps
below are smooth and fix $e$, there is a neighborhood of $e$ and a constant
$C_1\ge1$ (depending only on $G,d_G,Q$ and the $v_i$) such that, for elements of
norm $\le r_0$: the $L \ltimes U$-coordinates $(l,u)$ of $q\in Q$ satisfy
$d_G(e,l),d_G(e,u)\le C_1 d_G(e,q)$; right translation by $u_i$ and the chart
$\log:U\to\mathfrak u$ are $C_1$-Lipschitz; and
$\|\operatorname{Ad}(x)-I\|\le C_1\,d_G(e,x)$. Fix $c_b$ with
$\|A\|_{op}\le c_b\max_i\|Av_i\|$ on $\operatorname{End}(\mathfrak u)$, and choose
$\delta_0>0$ so small that all norms occurring below stay in the valid range and
that $C_6\delta_0\le\eta_0$, where $C_6$ is defined at \eqref{eq:C6}.

Let $\delta\le\delta_0$ and let $g$ satisfy the hypotheses. Since $q_1=e$, pick
$l_0\in L$ with $d_G(g,l_0)\le\delta$, and for each $i$ pick $l_i\in L$ with
$d_G(g,u_il_iu_i^{-1})\le\delta$; then $d_G(l_0,u_il_iu_i^{-1})\le2\delta$.
Writing in $L \ltimes U$-coordinates
$l_0^{-1}u_il_iu_i^{-1}=:\mu_i w_i$ with $\mu_i:=l_0^{-1}l_i\in L$ and
$w_i:=(l_i^{-1}u_il_i)u_i^{-1}\in U$, the estimates above give
$d_G(e,\mu_i),d_G(e,w_i)\le 2C_1\delta$, whence
\[
  d_G\bigl(u_i,\ l_i^{-1}u_il_i\bigr)\le C_1\,d_G(e,w_i)\le 2C_1^2\delta,
  \qquad
  \|\operatorname{Ad}(l_i^{-1})v_i-v_i\|\le 2C_1^3\delta .
\]
Since $\operatorname{Ad}(l_0^{-1})v_i=\operatorname{Ad}(\mu_i)\operatorname{Ad}(l_i^{-1})v_i$
and $\|\operatorname{Ad}(\mu_i)-I\|,\ \|\operatorname{Ad}(\mu_i)\|=O(1)$ (as
$d_G(e,\mu_i)\le1$), we obtain
$\|\operatorname{Ad}(l_0^{-1})v_i-v_i\|\le C_5\delta$ for all $i$, with $C_5$
absorbing the constants; note this uses the Lipschitz bound for
$\operatorname{Ad}$ only at $\mu_i\in G_1$, so no a priori control of $l_0$ is
needed. Therefore
\begin{equation}\label{eq:C6}
  \|\psi(l_0^{-1})-I\|_{op}\le c_bC_5\,\delta=:C_6\,\delta\le\eta_0,
\end{equation}
and \eqref{eq:C_L} gives $d_G(e,l_0)=d_G(e,l_0^{-1})\le C_LC_6\delta$. Finally
\[
  d_G(e,g)\le d_G(g,l_0)+d_G(e,l_0)\le(1+C_LC_6)\,\delta=:D\,\delta,
\]
so $g\in G_{D\delta}$.
\end{proof}

We proceed to the following quantitative version of \cite[Lemma 6.4]{FG} which we will prove using the above two lemmata.
The proof will also follow closely the proof in \cite{FG}.
\begin{lemma}\label{lem: factor final}
	Let $G$ be a simple center-free algebraic real Lie group with a proper parabolic subgroup $Q$ containing $P$.
    There exist $r_0=r_0(G)$ and $k=k(G)\in \NN$ such that for every $r<r_0$ the following holds for every $\delta,p>0$ small and $R>0$ big.
	Let $\lambda$ be a norm $\delta,P$-invariant measure on $\subd$ such that 
    \begin{equation}\label{eq: assumption again}
     \lambda(\Gamma:d_H^+(\Gamma^{||}_{R},Q)\leq r/2)\geq 1-p\text{ and }\lambda(\Gamma\text{ is $r$ discrete})\geq 1-\frac{1}{16k}
    \end{equation}
    and 
    \begin{align*}       
        \delta+p&<\frac{\frac{1}{8k}}{(16kR^{\on{dim}(G)}/r+1)\log\!\big(R(2c_0/r)^{N(G)}\big)}.
    \end{align*}
	Then 
	\begin{equation}
        \lambda(\Gamma:\Gamma\cap G_{R}^{||}=\{e\})>1/4.
    \end{equation}
\end{lemma}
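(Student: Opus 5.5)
The plan is to follow the proof of \cite[Lemma 6.4]{FG}. We combine Lemma~\ref{lem: quantitative discreteness}, applied to several Levi subgroups of $Q$ at once, with Lemma~\ref{lem: trivial intersection}. Let $q_1,\dots,q_k\in Q$ be the elements given by Lemma~\ref{lem: trivial intersection}, so that $k\le\dim G$ and $\bigcap_i q_iLq_i^{-1}=\{e\}$. For each $i$, $L_i:=q_iLq_i^{-1}$ is a Levi factor of $Q$, and $Q=L_iN$ is a Levi decomposition whose centre $A_{L_i}=q_iA_Lq_i^{-1}$ lies in a conjugate of $S\le P$ by an element of $Q$.

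The first step is to check that $\lambda$ is norm-$\delta'$-invariant under each $A_{L_i}$, and in particular under the contracting element $a_i:=q_ia_0q_i^{-1}$, where $\delta'$ is comparable to $\delta$. If $q_i\in P$ this is immediate, since $A_{L_i}\le P$. Otherwise I would replace $q_i$ by a $Q$-conjugate lying in $P$: the Levi factors of $Q$ are exactly the $N$-conjugates of $L$, so one can take $q_i\in N\le P$. Then only finitely many fixed elements of $P$ occur, each a bounded product of elements of $G_1\cap P$. Norm-$\delta$ invariance under $P\cap G_1$ therefore gives norm-$O_G(\delta)$ invariance under each $a_i$, and this constant is harmless.

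The second step applies Lemma~\ref{lem: quantitative discreteness} to $L_i$ for each $i$, with parameters chosen as follows:
\begin{itemize}
\item $c=1/(2k)$;
\item $\delta''$ a small multiple of $r$, say $\delta''=r/(2D)$, where $D$ comes from Lemma~\ref{lem: trivial intersection};
\item $\delta'=r/2$, after possibly shrinking $r_0$ so that $C_0\delta'\le\delta''$ and $\delta'\le\delta_0$;
\item the radius chosen so that $R^{*}$ matches the given $R$.
\end{itemize}
The discreteness hypothesis $\lambda(r\text{-discrete})\ge 1-\frac{1}{16k}=1-c/8$ is exactly the lemma's assumption. The quantitative condition on $\delta+p$ in the statement is designed to imply the lemma's condition, with $(C_0R/r)^{\dim G}$ and $\log(C_0R\delta''^{-1})$ absorbed into $R^{\dim G}/r$ and $\log(R(2c_0/r)^{N(G)})$. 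Checking this is bookkeeping of constants, and the radii $R$ versus $R^{*}$ (up to $\sqrt{\,\cdot\,}$ losses) must be tracked. The output is
\[
\lambda\bigl(\Gamma: d_H^+(\Gamma^{||}_{R'},L_i)\le 2\delta''\bigr)\ge 1-\tfrac{1}{2k}
\]
for each $i$. A union bound over $i$ then gives a set of $\lambda$-measure at least $1/2$ on which $\Gamma^{||}_{R'}$ lies within $2\delta''$ of every $L_i$.

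The third step concludes on this set. By Lemma~\ref{lem: trivial intersection} (after comparing the norm metric with $d_G$ near the identity, or translating appropriately), every $\gamma\in\Gamma^{||}_{R'}$ is $2\delta''$-close to each $L_i$, and hence satisfies $\gamma\in G_{2D\delta''}\subseteq G_r$. Intersecting with the event that $\Gamma$ is $r$-discrete, of measure at least $1-\frac1{16k}$, forces $\gamma=e$. This gives $\lambda(\Gamma\cap G^{||}_{R'}=\{e\})\ge 1/2-1/(16k)>1/4$.

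I expect the main obstacle to be the third step. Lemma~\ref{lem: trivial intersection} is local: it concerns elements $g$ that are close to all $L_i$ and concludes that $g$ is near $e$. However, $d_H^+\le 2\delta''$ only places $\gamma$ near $L_i$, which does not say that $\gamma$ is near $e$ when $\|\gamma\|$ is large. Left-invariance of $d_G$ together with the closeness statement for $g$ close to each $q_iLq_i^{-1}$ should still work, because the lemma applies to every $g\in G$, not only to $g$ near $e$. I would check carefully that the constant $D$ is uniform for $g$ of norm up to $R$, since the norm metric and $d_G$ distort by factors of order $R$. If $D$ is not uniform, I would shrink $\delta''$ by a factor polynomial in $R$, which the logarithmic dependence in the hypothesis on $\delta+p$ should tolerate.
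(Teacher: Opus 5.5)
You follow the paper's proof: Levi factors $q_iLq_i^{-1}$ with $q_i\in U\subseteq P$ (so their centres lie in $P$), Lemma~\ref{lem: quantitative discreteness} for each $i$ with $c=1/(2k)$, a union bound, Lemma~\ref{lem: trivial intersection}, and intersection with the $r$-discrete event. The gap is in your choice of scales in the second step, and shrinking $r_0$ cannot fix it.

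Three constraints are in play:
\begin{itemize}
\item Lemma~\ref{lem: quantitative discreteness} needs $C_0\delta'\le\delta''$ (and $C_0\delta'<r$).
\item The final step needs $2D\delta''<r$, so that $r$-discreteness kills every $\gamma\in\Gamma^{||}_{R}$.
\item The hypothesis only gives proximity to $Q$ at scale $r/2$, so you need $\delta'\ge r/2$.
\end{itemize}
The first two force $2C_0D\,\delta'<r$, and with $\delta'\ge r/2$ this becomes $C_0D<1$. All three constraints are homogeneous in $r$, so shrinking $r_0$ changes nothing. Moreover $C_0D<1$ is false. Lemma~\ref{lem: quantitative discreteness} gives $C_0\ge1$. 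Also $D\ge1$: every $q_iLq_i^{-1}$ contains $e$, so any $g$ with $d_G(e,g)=\delta$ satisfies the hypothesis of Lemma~\ref{lem: trivial intersection}.

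Concretely, your choice $\delta'=r/2$, $\delta''=r/(2D)$ violates $C_0\delta'\le\delta''$. Raising $\delta''$ to $C_0r/2$ instead breaks the final step. So proximity at scale $r/2$ is too coarse for this argument; the input has to be at scale $\delta'<r/(2C_0D)$, a $G$-dependent fraction of $r$.

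The paper's proof takes $\delta'=\delta''=(r/(2c_0))^{N(G)}$. It therefore feeds Lemma~\ref{lem: quantitative discreteness} proximity at a scale far below $r/2$, which is strictly stronger than what the statement assumes. That finer scale is what the argument really uses. It costs nothing in the application, Lemma~\ref{lem: almost factor proof}: there the proximity scale is $\delta e^m\le\delta R(\log R)^{-\beta}$, and $\delta R<c$ with $c=c(\Ii(\Gamma),G,\beta)$ at your disposal. The repair is to impose the proximity hypothesis at such a finer scale, not to adjust $r_0$.

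\textbf{The radius.} You choose the radius so that $R^{*}$ equals the given $R$. You then end with $\Gamma\cap G^{||}_{R'}=\{e\}$ for $R'=\sqrt{R/(2C_0^2)}$, not with $\Gamma\cap G^{||}_{R}=\{e\}$ as the statement claims. This loss is built into Lemma~\ref{lem: quantitative discreteness}, whose hypothesis lives at radius $R^{*}=2C_0^2R^2$; the paper's proof does not track $R^{*}$. The loss is harmless downstream, where only a radius of order a power of $\log R$ is needed, but you should say which statement you actually prove.

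\textbf{The third step.} The obstacle you anticipate there is not real. Lemma~\ref{lem: trivial intersection} holds for every $g\in G$ with a uniform $D$, because its proof moves all distances to a neighbourhood of $e$ using invariance of the metric. Passing from its left-invariant metric to the right-invariant one used for $d_H^+$ costs nothing: $g\mapsto g^{-1}$ exchanges the two metrics and preserves each subgroup and each ball about $e$. So no $R$-dependent shrinking of $\delta''$ is needed.
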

\begin{proof}
	Let $L\leq Q$ be a Levi subgroup of $Q$, such that the center $A$ of $L$ is contained in $P$.
	By Lemma \ref{lem: quantitative discreteness} for every $p,c,r,\delta',\delta,\delta''$ satisfying the conditions of the lemma we have 
	\begin{equation}\label{eq: measure}
		\lambda(\Gamma:d_{H}^+(\Gamma^{||}_{R},L)\leq 2\delta'')\geq 1-c.
	\end{equation}
    Let $k=k(G)$ be the natural number from Lemma \ref{lem: trivial intersection}.
    We take $c=\frac{1}{2k},\delta''=\delta'=(r/(2c_0))^{N(G)}$ where $c_0,N(G)$ are as in Lemma \ref{lem: trivial intersection}, and take $\delta,p$ as we defined them. 
    One can verify that this choice of parameters satisfies the conditions of Lemma \ref{lem: quantitative discreteness} (in particular, for $r$ small enough one has $\delta'=\delta''=(r/(2c_0))^{N(G)}<r$ and $\delta'\le\delta''$, and the displayed inequality above is exactly the condition of Lemma \ref{lem: quantitative discreteness} with these values, since $\log(R\delta''^{-1})=\log\!\big(R(2c_0/r)^{N(G)}\big)$).
    
    Let $q_1,\dots,q_k\in U$ be the elements guaranteed by Lemma \ref{lem: trivial intersection}.
	Now, for $i = 1, \ldots, k$, the subgroup $q_i L q_i^{-1}$ is also a Levi subgroup of $Q$, with center $q_iAq_i^{-1}$. Since $q_i\in U\subseteq P$ and $A\subseteq P$ we have $q_iAq_i^{-1}\subseteq P$, so the norm $\delta,P$-invariance of $\lambda$ provides the invariance under $q_iAq_i^{-1}$ needed to apply Lemma \ref{lem: quantitative discreteness}. The same argument therefore yields
    \begin{equation}
    \lambda(\Gamma:d_{H}^+(\Gamma^{||}_{R},q_iLq_i^{-1})\leq 2\delta'')\geq 1-c.
	\end{equation}
    Hence,
	\begin{equation}
		\lambda(\Gamma:d_{H}^+(\Gamma^{||}_{R},q_iLq_i^{-1})\leq 2\delta''\text{ for every $i=1,\dots,k$})\geq 1-kc.
	\end{equation}
    Therefore since $c=\tfrac{1}{2k}$
	we obtain by the final part of Lemma \ref{lem: trivial intersection} and our choice of $\delta''$ (each point of $\Gamma^{||}_R$ being within $2\delta''$ of every $q_iLq_i^{-1}$)
	\begin{equation}
		 \lambda\big(\Gamma:d_{H}^+(\Gamma^{||}_{ R},\{e\})\leq c_0(2\delta'')^{1/N(G)}\big)\geq \frac12,
	\end{equation}
	and since $c_0(2\delta'')^{1/N(G)}=2^{1/N(G)-1}r\leq r$ by the choice $\delta''=(r/(2c_0))^{N(G)}$, this gives $\lambda(\Gamma:d_{H}^+(\Gamma^{||}_{R},\{e\})\leq r)\geq\tfrac12$.
	Since $\lambda(\Gamma:\Gamma\text{ is $r$-discrete})\geq 1-\frac{1}{16k}$ and $1/16k<1/16$ we find $\Gamma$'s of measure at least $1/4$ such that $\Gamma\cap G_{R}^{||}=\{e\}$ as desired.
\end{proof}

\subsection{Almost Furstenberg decomposition and conclusion of the proof}




We use the following lemma to control the discreteness radius of a random subgroup.
We let $\mu$ be a convolution power of the heat kernel $p_1$ high enough (depending only on $G$) to satisfy the Margulis-type inequality Theorem \ref{thm:main}.

\begin{lemma}[uniform discreteness]\label{lem: margulis functions}
	Suppose $\Lambda\leq G$ is a discrete subgroup and for a given $n>0$, consider the measure $\nu_n^{\subd}$ defined by
	\begin{equation}
		\nu_n^{\subd}=\frac{1}{n}\sum_{i=1}^n\mu^{(*i)}*\delta_{\Lambda}.
	\end{equation}
	Then there exists a constant $A=A(\Ii(\Lambda))$ and $\kappa=\kappa(G)>0$ such that for every $\eps>0$
	\begin{equation}
		\nu_n^{\subd}(\Gamma:\Ii(\Gamma)\leq \eps)\leq A\eps^\kappa.
	\end{equation}
\end{lemma}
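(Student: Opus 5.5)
The plan is to deduce the statement from the Margulis-type inequality of Theorem~\ref{thm:main} in the appendix, using an iterate-and-average argument followed by Markov's inequality. Theorem~\ref{thm:main} is what justifies the choice of $\mu$ as a sufficiently high convolution power of $p_1$.

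The first step is to extract from Theorem~\ref{thm:main} a proper function $u:\subd\to[0,\infty]$ with the following three properties:
\begin{itemize}
\item[(i)] it dominates a negative power of the discreteness radius, $u(\Gamma)\geq \Ii(\Gamma)^{-\kappa}$ for some $\kappa=\kappa(G)>0$;
\item[(ii)] it is controlled by such a power from above, $u(\Gamma)\leq C\,\Ii(\Gamma)^{-\kappa'}$, or at least $u(\Lambda)<\infty$ with a bound depending only on $\Ii(\Lambda)$;
\item[(iii)] it satisfies the contraction inequality $\mu*u\leq a\,u+b$ pointwise on $\subd$, with $0<a<1$ and $b>0$ depending only on $G$.
\end{itemize}
The natural candidate is an Eskin--Margulis style function built from the short elements of $\Gamma$, for instance a sum over primitive short elements $\gamma$ of $\|\gamma-e\|^{-\kappa}$, or a maximum over small-covolume discrete subgroups of $\Gamma$ of their covolume to a negative power. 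Condition (iii) is precisely what Theorem~\ref{thm:main} provides for the heat kernel; (i) and (ii) come from comparing $u$ with the shortest nontrivial element of $\Gamma$. If the appendix only gives $\mu*u\leq a u+b$ for a fixed convolution power $\mu^{*N}$, one can still handle all steps $i$ by using the uniform bound $\mu^{*j}*u\leq C'u$ for $j<N$. This bound holds because $\mu$ has rapidly decaying tails and conjugation by $g\in G_t$ changes $\Ii$ by at most a factor $e^{O(t)}$.

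The second step is to iterate (iii). Using linearity and positivity of convolution, one gets
\[
\mu^{*i}*u(\Lambda)\leq a^i u(\Lambda)+\frac{b}{1-a}
\]
for every $i$. Averaging over $i=1,\dots,n$ then gives $\nu_n^{\subd}(u)\leq u(\Lambda)+\frac{b}{1-a}=:A$. By (ii), the constant $A$ depends only on $\Ii(\Lambda)$ and $G$.

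The third step is Markov's inequality combined with (i):
\[
\nu_n^{\subd}(\Gamma:\Ii(\Gamma)\leq\eps)\leq \nu_n^{\subd}(u\geq \eps^{-\kappa})\leq A\eps^{\kappa}.
\]

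The main obstacle is the first step: producing a genuine Margulis function on $\subd$ that is comparable to $\Ii^{-\kappa}$ from both sides. The quantity $\Ii^{-\kappa}$ alone is not subharmonic in the required sense, because several independent short elements can conspire, so one needs the full Eskin--Margulis--Mozes construction over nilpotent or small subgroups and a careful application of Theorem~\ref{thm:main}. Everything else is formal.
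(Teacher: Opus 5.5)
Your argument is the paper's: the Margulis-type inequality of Theorem~\ref{thm:main}, iterated along the walk, averaged over $i\le n$, and followed by Markov's inequality. Where you go wrong is in flagging the first step as the main obstacle.

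Theorem~\ref{thm:main} is stated for $u=\Ii_G^{-\delta}$ itself. So (i)--(iii) hold for this $u$ directly, with (i) and (ii) following from the comparison between $\Ii_G$ and $\Ii$ recorded in the appendix. No Eskin--Margulis--Mozes type construction is needed. In fact, if you replaced $u$ by such a sum over short elements, Theorem~\ref{thm:main} would no longer apply and you would have to reprove the contraction yourself.

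Your assertion that $\Ii^{-\kappa}$ alone cannot work because short elements ``conspire'' is the right intuition for the shortest-vector function on $\operatorname{SL}_n(\RR)/\operatorname{SL}_n(\mathbb Z)$, but it is false here. By the Zassenhaus input (GLM1), all elements of $\Gamma$ in the Zassenhaus neighbourhood have logarithms in one nilpotent subalgebra $\mathfrak n(\Gamma)$. They are therefore controlled simultaneously by the smallest singular value $m_{\mathfrak n(\Gamma)}(g)$. Its negative $\delta$-moment under $\mu^{*N}$ is below $1$ uniformly in $\mathfrak n$ once $N$ is large, by the GLM angular estimate and the positive Cartan drift. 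Elements outside that neighbourhood can become short only when $L(g)$ is large, which costs the integrable term $R^{-\delta}L(g)^{\delta}$ (Lemma~\ref{lem:thin}).

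Your handling of the fact that the contraction is only available for $\mu^{*N}$ is correct, and somewhat more careful than the paper, which absorbs $N$ into the choice of $\mu$ as a high power of $p_1$. The bound $\mu^{*j}*u\le C'u$ for $j<N$ follows from \eqref{eq:crude-lower}. Since $\Ii_G\le\rho$ and $L\ge 1$, that inequality gives $\Ii_G(\Gamma^g)^{-\delta}\le L(g)^{\delta}\,\Ii_G(\Gamma)^{-\delta}$. Combine this with $\int L^{\delta}\,d\mu^{*j}\le\bigl(\int L^{\delta}\,d\mu\bigr)^{j}<\infty$. The constant you get is then $A=C'\bigl(u(\Lambda)+b/(1-a)\bigr)$ rather than $u(\Lambda)+b/(1-a)$.
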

\begin{proof}
    The proof is a simple combination of Theorem \ref{thm:main} and Markov's inequality. 
    As the measure $\mu$ satisfies the conditions of Theorem \ref{thm:main}, there exist $N\geq 1,\delta>0,c\in (0,1),b<\infty$ such that for every discrete subgroup $\Gamma<G$ we have
    \begin{align}
        \int_G\Ii(\Gamma^g)^{-\delta}d\mu^{*N}(g)\leq c\Ii(\Gamma)^{-\delta}+b.
    \end{align}
    Given $k\in \NN$, a repeated $k$-fold application of the above fact, starting from $\Gamma=\Lambda$ yields
    \begin{align}
        \int_G \Ii(\Lambda^g)^{-\delta}d\mu^{*Nk}(g)\leq c^k\Ii(\Lambda)+b/(1-c).
    \end{align}
    Denoting $A=c\Ii(\Lambda)+b/(1-c)$ we obtain (using $c\in(0,1)$) by Markov's inequality for every $\eps>0$
    \begin{align}
        \mu^{*Nk}(\Ii(\Lambda^g)^{-\delta}>\eps^{-1})\leq A\eps
    \end{align}
    which means in other words
    \begin{align}
        \mu^{*Nk}(\Ii(\Lambda^g)<\eps)\leq A\eps^{\delta}.
    \end{align}
    We may replace without loss of generality $\mu$ by $\mu^{*N}$ (since $\mu$ was assumed to be only a high enough power of the heat kernel $p_1$ in the first place). 
    Denoting the average $\nu_n=\frac{1}{n}\sum_{k=1}^n\mu^{*Nk}$ and averaging the above inequality we obtain the same upper bound for $\nu_n^{\subd}:=\nu_n*\delta_{\Lambda}$.
    As is evident from its definition, $A$ only depends on $\Ii(\Lambda)$ and by Theorem \ref{thm:main}$, \kappa:=\delta$ only depends on $G$.
\end{proof}

\begin{claim}\label{cl: aux}
	For any subsets $S_1,S_2\subseteq G$ and any $g\in G$ we have
	\begin{equation}
		d_H^+(gS_1g^{-1},gS_2g^{-1})\leq |g|\,d_H^+(S_1,S_2),
	\end{equation}
	and the same holds for the two-sided Hausdorff distance $d_H$.
\end{claim}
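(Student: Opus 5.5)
The claim reduces to a pointwise Lipschitz estimate for the conjugation map $c_g\colon x\mapsto gxg^{-1}$; I would prove that first and then pass to the infimum and supremum in the definition \eqref{def: dh+}. Throughout I read $|g|$ as the Lipschitz constant of $c_g$ for the metric in use. For the norm metric $d_{\|\cdot\|}(x,y)=\|x-y\|$, coming from a faithful matrix representation, this constant is $\|g\|\,\|g^{-1}\|$, which is how the claim is applied to the sets $\Gamma^{\|\cdot\|}_R$. For the right-invariant metric $d_G$ the corresponding constant is the operator norm of $\Ad_g$, up to a constant depending only on $G$.

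\emph{Step 1 (pointwise bound).} For $a,b\in G$ in the norm metric,
\[
d_{\|\cdot\|}(gag^{-1},gbg^{-1})=\|g(a-b)g^{-1}\|\le \|g\|\,\|g^{-1}\|\,\|a-b\| = |g|\,d_{\|\cdot\|}(a,b),
\]
by submultiplicativity of the operator norm. In the Riemannian case, right invariance gives $d_G(gag^{-1},gbg^{-1})=d_G(ga,gb)$. Left translation by $g$ is a diffeomorphism whose differential is conjugate, via right translations, to $\Ad_g$. So its Lipschitz constant on geodesics is bounded by the operator norm of $\Ad_g$, which is the appropriate $|g|$.

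\emph{Step 2 (infimum over the target set).} Fix $a\in S_1$. Conjugation is a bijection $S_2\to gS_2g^{-1}$, so every point of $gS_2g^{-1}$ has the form $gbg^{-1}$ with $b\in S_2$. Hence
\[
d(gag^{-1},gS_2g^{-1})=\inf_{b\in S_2}d(gag^{-1},gbg^{-1})\le |g|\inf_{b\in S_2}d(a,b)=|g|\,d(a,S_2).
\]

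\emph{Step 3 (supremum over the source set).} Every point of $gS_1g^{-1}$ is $gag^{-1}$ for some $a\in S_1$. Taking the supremum over $a\in S_1$ therefore gives $d_H^+(gS_1g^{-1},gS_2g^{-1})\le |g|\,d_H^+(S_1,S_2)$.

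\emph{Step 4 (two-sided distance).} The two-sided Hausdorff distance is $d_H(S_1,S_2)=\max\{d_H^+(S_1,S_2),\,d_H^+(S_2,S_1)\}$. Applying Step 3 to both orderings and taking the maximum gives the same bound for $d_H$.

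The only point needing care is fixing the meaning of $|g|$ so that Step 1 is a genuine Lipschitz bound for the chosen metric. Once that is fixed, Steps 2–4 are formal.
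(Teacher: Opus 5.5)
Your proposal is correct and follows essentially the paper's route. Both arguments first prove a pointwise Lipschitz bound for $c_g$: the paper writes $c_g=L_g\circ R_{g^{-1}}$, where $R_{g^{-1}}$ is an isometry for the right-invariant metric and $L_g$ distorts distances by at most the operator norm of $\Ad_g$. Both then pass to infima and suprema. Your separate treatment of the matrix-norm metric via submultiplicativity is a harmless extra; note only that $\|g\|\,\|g^{-1}\|$ is an upper bound for the Lipschitz constant of $c_g$, not necessarily equal to it.
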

\begin{proof}
	Conjugation $c_g(x)=gxg^{-1}$ factors as $c_g=L_g\circ R_{g^{-1}}$. Since the metric $d$ is right-invariant, $R_{g^{-1}}$ is an isometry, and left translation $L_g$ has Lipschitz constant $|g|$ (the operator norm controlling the metric distortion of $\Ad_g$). Hence $d(c_g(x),c_g(y))\leq |g|\,d(x,y)$ for all $x,y\in G$, and taking suprema/infima over $S_1,S_2$ gives the stated bounds on the one-sided (and two-sided) Hausdorff distances.
\end{proof}

\begin{lemma}[from a point to a measure]\label{lem: upgraded furstenberg}
    Suppose $\Lambda\leq G$ is a discrete subgroup, let $m>0$ be natural, and let $r>0$.
    Then we can write
    \begin{align*}
        \tilde \nu_m:=\frac{1}{m}\sum_{i=1}^{m}\mu^{*i}=\int_K\theta\tilde \lambda_\theta\,dm_K(\theta)
    \end{align*}
    for probability measures $\tilde \lambda_\theta$ on $P_{e^m}^{||}$ such that for $m_K$-almost every $\theta\in K$ the measure $\tilde \lambda_\theta$ is norm $m^{-1/2}$-$P$ almost invariant and furthermore denoting $\lambda_\theta=\tilde \lambda_\theta*\delta_\Lambda$ we have
    \begin{align*}
        \nu_{m,\Lambda}:=\tilde \nu_m*\delta_\Lambda=\int_K\theta\tilde \lambda_\theta*\delta_\Lambda\,dm_K(\theta)=\int_K\theta\lambda_\theta\,dm_K(\theta)
    \end{align*}
    and for subset $K_0\subset K$ of $m_K$ measure at least $1-A(\Ii(\Lambda))^{1/2}r^{\kappa/2}$ we have for every $\theta\in K_0$ 
    \begin{align*}
        \lambda_\theta(\Lambda':\Ii(\Lambda')\geq r)\geq 1-A(\Ii(\Lambda))^{1/2}r^{\kappa/2}
    \end{align*}
    where $A(\Ii(\Lambda))$ and $\kappa$ are the constants from Lemma \ref{lem: margulis functions}.
    Moreover, the measures $\lambda_\theta$ are also $m^{-1/2}$-$P$-norm almost invariant.
\end{lemma}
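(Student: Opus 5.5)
We will prove the statement through the Iwasawa decomposition $G=KP$ together with the $K$-bi-invariance of the heat kernel. First, since $\mu=p_{N_0}$ is $K$-bi-invariant, each convolution power $\mu^{*i}$ is $K$-bi-invariant as well, and so is the Ces\`aro average $\tilde\nu_m$. Writing $g=\theta b$ with $\theta\in K$ and $b\in P$, we disintegrate $\tilde\nu_m$ along the projection $g\mapsto \theta$. Left $K$-invariance of $\tilde\nu_m$ makes the conditional measures on $P$ independent of $\theta$ up to a null set. Even so, we keep the $\theta$-indexed form $\tilde\nu_m=\int_K\theta\tilde\lambda_\theta\,dm_K(\theta)$, because the discreteness statement is naturally per-$\theta$. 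The support claim $\tilde\lambda_\theta(P^{\|}_{e^m})=1$ is intended to follow from the drift and norm-growth estimate of Claim \ref{cl: drift} / Proposition \ref{prop:heat-kernel-cartan-concentration}: after $i\le m$ steps the norm is at most $e^{Cm}$ up to exponentially small tails. Any such tail is either discarded and renormalised, at a cost below $m^{-1/2}$, or absorbed by rescaling the exponent.

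For the almost invariance, we use the identity $\mu*\tilde\nu_m-\tilde\nu_m=\frac1m(\mu^{*(m+1)}-\mu)$, which has total variation at most $2/m$, exactly as in Claim \ref{cl: almost stationary}. Projecting to $G/P\cong K/M$ gives stationarity information for the boundary projection, but the invariance we need is for the right action of $P$ on the fibre measures. The route we would take is to decompose $\mu=\int_K\int_P \theta b\,d(\cdot)$ by $K$-bi-invariance and to compare $\tilde\nu_m$ with $\tilde\nu_m*\mu^{*j}$ for small $j$. Right convolution by $\mu$ changes $\tilde\nu_m$ by $O(1/m)$ in total variation. Since $\mu$ has a smooth density charging a neighbourhood of $P_1$ in the $P$-coordinate, an averaging argument then shows that for $b\in P_1$ the $b$-translate of $\tilde\lambda_\theta$ is $O(m^{-1/2})$-close in total variation for most $\theta$. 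We choose the square root precisely so that Markov's inequality over $\theta$ converts an $L^1$-in-$\theta$ bound of size $O(1/m)$ into a pointwise bound on a set of full measure, after moving the exceptional $\theta$ into a modified $\tilde\lambda_\theta$. Invariance for $\lambda_\theta=\tilde\lambda_\theta*\delta_\Lambda$ is then immediate, since pushing forward by the orbit map of $\Lambda$ is $P$-equivariant and contracts total variation.

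For the discreteness statement we apply Lemma \ref{lem: margulis functions}, which gives
\[
\nu_{m,\Lambda}(\Ii\le r)=\int_K\lambda_\theta(\theta^{-1}\cdot\{\Ii\le r\})\,dm_K\le A r^\kappa .
\]
Conjugation by $\theta\in K$ preserves $\Ii$, since the metric can be taken bi-$K$-invariant up to constants; this is where we need a $K$-invariant choice, or else we absorb constants into $A$. Hence $\int_K\lambda_\theta(\Ii\le r)\,dm_K\le Ar^\kappa$, and Markov's inequality with threshold $(Ar^\kappa)^{1/2}$ yields the set $K_0$ of measure at least $1-A^{1/2}r^{\kappa/2}$ on which $\lambda_\theta(\Ii\ge r)\ge1-A^{1/2}r^{\kappa/2}$. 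We expect the main obstacle to be the almost $P$-invariance of the fibre measures: turning the global $1/m$-almost stationarity into per-fibre right $P$-invariance requires a careful use of the smoothness of the heat kernel in the $P$-coordinates, and this is where the loss from $1/m$ to $m^{-1/2}$ enters.
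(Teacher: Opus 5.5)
Your handling of the discreteness statement matches the paper's argument: Lemma~\ref{lem: margulis functions}, invariance of $\{\Ii\geq r\}$ under conjugation by $K$, then Markov's inequality at threshold $(Ar^{\kappa})^{1/2}$. So does your transfer of almost invariance from $\tilde\lambda_\theta$ to $\lambda_\theta=\tilde\lambda_\theta*\delta_\Lambda$ by $P$-equivariant push-forward. The paper, however, does not prove the decomposition $\tilde\nu_m=\int_K\theta\tilde\lambda_\theta\,dm_K(\theta)$ with almost $P$-invariant fibres; it imports it from the almost Furstenberg decomposition of \cite{QNZ}. Your attempt to derive it has a genuine gap at the step you flag as the main obstacle, and the mechanism you propose there cannot work.

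By bi-$K$-invariance and $G=KAN$ one has $\mu=m_K*\mu_P$, and inductively $\mu^{*i}=m_K*\mu_P^{*i}$, where $\mu_P$ is the law of the $AN$-component of $\mu$. So, as you note, the fibres can be taken independent of $\theta$, e.g.\ $\tilde\lambda_\theta\equiv m_M*\beta$ with $\beta=\frac1m\sum_{i\leq m}\mu_P^{*i}$. The $M$-average is needed because a measure on $AN$ is singular to its $M$-translates. A Markov argument in $\theta$ therefore gains nothing, and reassigning the mass of exceptional $\theta$ would destroy the exact identity.

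Worse, the $L^1$-in-$\theta$ bound of order $1/m$ that you want to feed into Markov is false for \emph{every} such decomposition. The Iwasawa projection $H:G\to\mathfrak a$ satisfies $H(\theta p)=H(p)$ and $H(ap)=\log a+H(p)$ for $\theta\in K$, $a\in A$, $p\in P$. Writing $\tau_v$ for translation by $v$ on $\mathfrak a$, this gives
\[
\int_K\|a\tilde\lambda_\theta-\tilde\lambda_\theta\|_{\mathrm{TV}}\,dm_K(\theta)\;\geq\;\|(\tau_{\log a})_*H_*\tilde\nu_m-H_*\tilde\nu_m\|_{\mathrm{TV}}.
\]
Here $H_*\mu^{*i}=(H_*\mu_P)^{*i}$ is a random walk on $\mathfrak a$ with nonzero drift and nondegenerate fluctuations of size $\sqrt i$. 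Since $\mathrm{rank}_\RR G\geq2$, you may take $\log a$ of size one transverse to the drift. Then the right-hand side is $\asymp\frac1m\sum_{i\leq m}i^{-1/2}\asymp m^{-1/2}$. This CLT spread, not Markov, is where the exponent $1/2$ comes from.

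More fundamentally, almost stationarity does not by itself imply almost invariance. $\tilde\nu_m$ is $\frac2m$-almost stationary, yet for $a\in A_1\setminus\{e\}$ one has $\|a\tilde\nu_m-\tilde\nu_m\|_{\mathrm{TV}}\geq\|a_*m_{G/P}-m_{G/P}\|_{\mathrm{TV}}>0$ uniformly in $m$. Here $m_{G/P}$ is the $K$-invariant measure on the Poisson boundary $G/P$. The disintegration over $K$ removes exactly this boundary obstruction. On the fibres, however, stationarity only yields $\|\mu_P*\beta-\beta\|_{\mathrm{TV}}\leq 2/m$, and smoothness of $\mu$ does not upgrade this to invariance under $P_1$.

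That upgrade needs the dynamics of the walk on $AN$. The $A$-component drifts into the positive Weyl chamber, so in $n\,a\,n'=a\,(a^{-1}na)\,n'$ the factor $a^{-1}na$ is exponentially close to $e$, and $N$-translates are nearly free. The $\sqrt i$ spread of the $A$-component controls $A$-translates. Without this input, or the citation to \cite{QNZ}, the almost $P$-invariance of the fibres, and hence the lemma, is not established.
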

\begin{proof}
    The fact that $\tilde \nu_m$ can be written in this form follows from the almost Furstenberg decomposition \cite{QNZ}.
    The second equation follows immediately from the linearity of the convolution.
    
    For the last part of the lemma, we use Lemma \ref{lem: margulis functions} for the discrete subgroup $\Lambda$ to obtain 
    \begin{align*}
        \nu_{m,\Lambda}(\Gamma:\Ii(\Gamma)\leq r)\leq A(\Ii(\Lambda))r^\kappa.
    \end{align*}
    (Lemma \ref{lem: margulis functions} is stated for $\tfrac1n\sum_{i=1}^n\mu^{*i}*\delta_\Lambda$; the additional $i=0$ term in $\nu_{m,\Lambda}=\tfrac1m\sum_{i=1}^{m}\mu^{*i}*\delta_\Lambda$ equals $\tfrac1m\delta_\Lambda$, which assigns measure $0$ to $\{\Ii\leq r\}$ for $r<\Ii(\Lambda)$, so the same bound applies.)
    Denote $W_r=\{\Gamma:\Ii(\Gamma)\geq r\}$, so by Markov inequality and the definition of $\nu_{m,\Lambda}$ we have
    \begin{align*}
        m_K(\theta: \theta\lambda_\theta(W_r)\geq 1-A(\Lambda)^{1/2}r^{\kappa/2})\geq 1-A(\Lambda)^{1/2}r^{\kappa/2}
    \end{align*}
    However note that since $\theta\in K$ we have $\theta W_r=W_r$ and therefore, denoting $K_0=\{\theta\in K:\lambda_\theta(W_r)\geq 1-A(\Lambda)^{1/2}r^{\kappa/2}\}$ we obtain $m_K(K_0)\geq 1-A(\Lambda)^{1/2}r^{\kappa/2}$ and for every $\theta\in K_0$
    \begin{align*}
        \lambda_\theta(\Lambda':\Ii(\Lambda')\geq r)\geq 1-A(\Lambda)^{1/2}r^{\kappa/2}.
    \end{align*}
    The last part of the lemma follows from the fact that $\tilde \lambda_\theta$ is $m^{-1/2}$-$P$-almost invariant. 
\end{proof}

\begin{nota}
    We denote $A'(\Ii(\Lambda))=A(\Ii(\Lambda))^{1/2}$ and $\kappa'(G)=\kappa(G)/2$ where $A(\Ii(\Lambda))$ and $\kappa(G)$ are as in Lemma \ref{lem: margulis functions}.
\end{nota}

\begin{corollary}\label{cor: fin cor}
    Suppose $\Lambda\leq G$ is a discrete subgroup and $Q\leq G$ is proper parabolic containing $P$. 
    Suppose also that $\delta,R>0$ satisfy $d_H^+(\Lambda\cap G_R^{||},Q)\leq \delta$.
    Then for every $r>0$ small and every natural $m>0$ we can write 
    \begin{equation}
        \nu_{m,\Lambda}=\int_K\theta\lambda_\theta\,dm_K(\theta)
    \end{equation}
    where the measures $\lambda_\theta$ are $m^{-1/2}$-$P$-almost invariant and for a set $K_{m,r}$ of $\theta$'s of $m_K$ measure at least $1-A'(\Ii(\Lambda))r^{\kappa'(G)}$ we have
    \begin{align*}
        \lambda_\theta(\Lambda':d_H^+(\Lambda'\cap G^{||}_{R/e^m},Q)\leq \delta e^m)&=1.\\
        \lambda_\theta(\Lambda'\text{ is $r$-discrete})&\geq 1- A'(\Ii(\Lambda))r^{\kappa'(G)}
    \end{align*}
\end{corollary}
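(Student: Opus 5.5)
We derive the corollary directly from Lemma~\ref{lem: upgraded furstenberg}: that lemma already supplies the decomposition, the almost invariance, and the discreteness bound. The one new ingredient is the deterministic Hausdorff estimate. Apply Lemma~\ref{lem: upgraded furstenberg} to $\Lambda$, $m$ and $r$. This gives
\[
\nu_{m,\Lambda}=\int_K\theta\lambda_\theta\,dm_K(\theta),\qquad \lambda_\theta=\tilde\lambda_\theta*\delta_\Lambda,
\]
with $\tilde\lambda_\theta$ supported on $P^{||}_{e^m}$ and $\lambda_\theta$ norm $m^{-1/2}$-$P$-almost invariant. It also gives a set $K_0$ with $m_K(K_0)\geq 1-A'(\Ii(\Lambda))r^{\kappa'(G)}$ on which $\lambda_\theta(\Lambda'\text{ is }r\text{-discrete})\geq 1-A'(\Ii(\Lambda))r^{\kappa'(G)}$. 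We take $K_{m,r}:=K_0$, so the second conclusion is immediate (identifying $\Ii(\Lambda')\geq r$ with $r$-discreteness, as in Definition~\ref{def: disc rad}).

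For the first conclusion we show the containment holds for \emph{every} $\theta$, by a pointwise argument. Every $\Lambda'$ in the support of $\lambda_\theta$ has the form $\Lambda'=\Lambda^{q}=q\Lambda q^{-1}$ with $q\in P^{||}_{e^m}$. Take $\gamma'\in\Lambda'\cap G^{||}_{R/e^m}$ and write $\gamma'=q\gamma q^{-1}$ with $\gamma\in\Lambda$. By submultiplicativity of the norm, $\|\gamma\|\leq\|q^{-1}\|\,\|\gamma'\|\,\|q\|$. So if the matrix norms of $q$ and $q^{-1}$ are controlled by $e^{m/2}$ (or, after adjusting constants, by $e^m$), then $\gamma\in\Lambda\cap G^{||}_R$. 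Hence
\[
\Lambda'\cap G^{||}_{R/e^m}\subseteq q\bigl(\Lambda\cap G^{||}_R\bigr)q^{-1}.
\]
Since $Q\supseteq P\ni q$, we have $qQq^{-1}=Q$. Claim~\ref{cl: aux} then gives
\[
d_H^+(\Lambda'\cap G^{||}_{R/e^m},Q)\leq d_H^+\bigl(q(\Lambda\cap G^{||}_R)q^{-1},\,qQq^{-1}\bigr)\leq |q|\,\delta\leq \delta e^m .
\]
As this holds for every $\Lambda'$ in the support, $\lambda_\theta$ of the event equals $1$.

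The main obstacle is the bookkeeping of norms. Elements of $P^{||}_{e^m}$ are at norm-distance below $e^m$ from $e$, and this must be converted into bounds on $\|q\|$, $\|q^{-1}\|$ and the Lipschitz constant $|q|$ from Claim~\ref{cl: aux}. There are two issues. First, conjugation distorts by roughly $\|q\|\|q^{-1}\|$ rather than $\|q\|$. Second, the norm metric $d_{\|\cdot\|}$ is not exactly submultiplicative. I would handle both by interpreting the support bound in Lemma~\ref{lem: upgraded furstenberg} so that $\|q\|,\|q^{-1}\|\leq e^{m/2}$; this is harmless, since that lemma's almost Furstenberg decomposition allows reparametrizing $m$ by a constant factor. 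A constant depending only on $G$ can then be absorbed into the $e^m$ factors in both places. If this does not come out exactly, the conclusion holds with $e^{Cm}$ in place of $e^m$. That weaker form only changes constants in the later choice $m=\lfloor\log R-\beta\log\log R\rfloor$.
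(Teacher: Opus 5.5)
Your argument is the paper's proof. You apply Lemma~\ref{lem: upgraded furstenberg}, take $K_{m,r}:=K_0$ for the discreteness bound, and get the Hausdorff estimate for every $\theta$ pointwise on the support $P^{||}_{e^m}$ of $\tilde\lambda_\theta$ via Claim~\ref{cl: aux} and $qQq^{-1}=Q$. The paper also skips the norm bookkeeping you flag (the ball shrinking by $\|q\|\,\|q^{-1}\|$, and $|q|$ possibly being $e^{Cm}$ rather than $e^m$), so your $e^{Cm}$ caveat is a fair refinement rather than a gap — though reparametrizing $m$ is not free, since it changes both the decomposed measure $\nu_{m,\Lambda}$ and the invariance constant $m^{-1/2}$.
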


\begin{proof}
    Fix some $\theta\in K$.
    By Lemma \ref{lem: upgraded furstenberg} we know that the measures $\lambda_\theta$ can be written as $\tilde \lambda_\theta*\delta_{\Lambda}$ where $\tilde \lambda_\theta$ is $m^{-1/2}$-$P$-norm almost invariant and supported on $P_{e^m}^{||}$.
    Using Claim \ref{cl: aux} and our assumption that $d_H^+(\Lambda\cap G_R^{||},Q)\leq \delta$ we obtain that for every $p\in \supp(\tilde \lambda_\theta)$ we have $d_H^+(p\Lambda p^{-1}\cap G_{R/e^m}^{||},Q)\leq \delta e^m$ which in other words means 
    \begin{align*}
        \lambda_\theta(\Lambda':d_H^+(\Lambda'\cap G_{R/e^m}^{||},Q)\leq \delta e^m)=1.
    \end{align*}
    Combining what we have just shown with Lemma \ref{lem: upgraded furstenberg} we obtain that for the set $K_{m,r}:=K_0$, which has measure at least $1-A'(\Ii(\Lambda))r^{\kappa'(G)}$ we have 
    \begin{align*}
        \lambda_\theta(\Lambda':d_H^+(\Lambda'\cap G^{||}_{R/e^m},Q)\leq \delta e^m)&=1.\\
        \lambda_\theta(\Lambda'\text{ is $r$-discrete})&\geq 1- A'(\Ii(\Lambda))r^{\kappa'(G)}
    \end{align*}
    as desired.
\end{proof}

Now we can prove Lemma \ref{lem: almost factor proof}.
\begin{proof}[Proof of Lemma \ref{lem: almost factor proof}]
    Assume without loss of generality that $Q\geq P$ where $P$ is the minimal parabolic. 
    Put \(\Lambda=\Gamma_x=\operatorname{stab}_G(x)\).
    Let $R,\delta>0$ be such that $d_H^+(\Lambda\cap G_R^{||},Q)\leq \delta$ for some $Q$ as in the lemma.
    Let $m>0$ natural to be determined.

    Apply Corollary \ref{cor: fin cor} (whose assumptions are satisfied since we assume that $d_H^+(\Lambda\cap G_R^{||},Q)\leq \delta$) and obtain the set $K_{m,r}\subset K$ with the properties guaranteed by the corollary. 
    Let $\theta\in K_{m,r}$.
    We will choose the parameters appropriately so that the measure $\lambda_\theta$ will satisfy the conditions of Lemma \ref{lem: factor final} which were:
    	Let $\lambda$ be a norm $\overline \delta,P$-invariant measure on $\subd$ such that 
    \begin{equation}\label{eq: assumption recalled}
     \lambda(\Gamma:d_H^+(\Gamma^{||}_{\overline R},Q)\leq r/2)\geq 1-p\text{ and }\lambda(\Gamma\text{ is $r$ discrete})\geq 1-\frac{1}{16k}
    \end{equation}
    and 
    \begin{align}\label{eq: condition inequality}     
        \overline \delta+p&<\frac{\frac{1}{8k}}{(16k\overline R^{\on{dim}(G)}/r+1)\log (\overline R(2c_0r^{-1})^{N(G)})}.
    \end{align}
    By Corollary \ref{cor: fin cor} we know that $\lambda_\theta(\Lambda'\text{ is $r$-discrete})\geq 1- A'(\Ii(\Lambda))r^{\kappa'(G)}$
    and therefore we want $\tfrac{1}{16k}
    \geq A'(\Ii(\Lambda))r^{\kappa'(G)}$ so we choose
    \begin{align*}
        r=\min\{(16kA'(\Ii(\Lambda)))^{-\kappa'(G)^{-1}},r_0(G)/2,\Ii(\Ii(\Lambda))/2\}.
    \end{align*}

    We still have to choose $\overline R$ and $p,\overline \delta$.

    Also by Corollary \ref{cor: fin cor} we know that the measure $\lambda=\lambda_\theta$ is $m^{-1/2}$-norm almost $P$-invariant so we can choose $\overline \delta=m^{-1/2}$. 
    Since we have, again by Corollary \ref{cor: fin cor}, that 
    \begin{align*}
        \lambda(\Lambda':d_H^+(\Lambda'\cap G^{||}_{R/e^m},Q)\leq \delta e^m)=1,
    \end{align*}
    in order to match the condition stated in Equation \eqref{eq: assumption recalled} $\delta,m,\overline R$ have to satisfy
    \begin{align*}
        \delta e^m\leq r/2, \overline R=R/e^m.
    \end{align*}
    Let us list the conditions we reached so far:
    \begin{align*}
        \overline \delta=m^{-1/2}\\
        \delta e^m\leq r/2\\
        p=0\\
        \overline R=R/e^m
    \end{align*}
    and we can choose $p=0$.
It remains to verify the inequality \eqref{eq: condition inequality}, which (with $p=0$, $\overline\delta=m^{-1/2}$) reads
\begin{align*}
    m^{-1/2}<\frac{\tfrac{1}{8k}}{(\tfrac{16k}{r}\overline R^{\on{dim}(G)}+1)\log(2\overline Rr^{-1})}.
\end{align*}
Since $r=r(\Lambda,G)$ is a constant and $\overline R\leq R$, there is a constant $c_1=c_1(\Lambda,G)>0$ such that, for $\overline R$ large, the right-hand side is at least $\big(c_1\,\overline R^{\on{dim}(G)}\log \overline R\big)^{-1}$; hence it suffices to show
\begin{align}\label{eq: star}
    \overline R^{\on{dim}(G)}\log \overline R<c_1^{-1}m^{1/2}.
\end{align}
Fix any exponent $\beta$ with $0<\beta<\tfrac{1}{2\on{dim}(G)}$, and set
\begin{align*}
    m=\big\lfloor\log R-\beta\log\log R\big\rfloor,\qquad\text{so that}\qquad \overline R=R/e^m\geq (\log R)^{\beta}
\end{align*}
and $\overline R\leq e\,(\log R)^{\beta}$, while $m=(1+o(1))\log R$. Then, since $\beta\on{dim}(G)<\tfrac12$,
\begin{align*}
    \overline R^{\on{dim}(G)}\log \overline R\leq e^{\on{dim}(G)}(\log R)^{\beta\on{dim}(G)}\big(\beta\log\log R+1\big)=o\big((\log R)^{1/2}\big),
\end{align*}
whereas $c_1^{-1}m^{1/2}=(1+o(1))c_1^{-1}(\log R)^{1/2}$. Hence \eqref{eq: star}, and therefore \eqref{eq: condition inequality}, holds for all $R$ large enough (depending on $G,\Lambda,\beta$).

With this choice $e^m\leq R(\log R)^{-\beta}$, so the constraint $\delta e^m\leq r/2$ is implied by
\begin{align*}
    \delta\,\frac{R}{(\log R)^{\beta}}\leq r/2\impliedby \delta R\ll_{G,\Lambda}1,
\end{align*}
which holds under the hypothesis $\delta R<c$. This shows $\overline R=R/e^m\geq (\log R)^{\beta}$, and the conclusion of Lemma \ref{lem: factor final} is that
\begin{align*}
        \lambda_\theta(\Gamma:\Gamma\cap G_{(\log R)^{\beta}}^{||}=\{e\})>1/4
\end{align*}
(using $G_{(\log R)^{\beta}}^{||}\subseteq G_{\overline R}^{||}$), which then implies by the decomposition in Corollary \ref{cor: fin cor}, since the event is $K$-invariant,
\begin{align*}
    \nu_{m,\Lambda}(\Gamma:\Gamma\cap G_{(\log R)^{\beta}}^{||}=\{e\})>1/4-A(\Ii(\Lambda))^{1/2}r^{\kappa/2}\geq 1/8
\end{align*}
as desired.
\end{proof}

\section{A new proof of the St\"{u}ck-Zimmer Theorem}\label{sec: stuck zimmer}

\begin{definition}
    For every $R>0$ we define $\Ii_R=\{\Gamma\leq G:\Gamma\cap G^{||}_R=\{e\}\}$.
\end{definition}

We first need to reduce to the case where $X=\subd$ and the action of $G$ is by conjugation as is explained in the following proposition.
We first reduce the theorem to the classification of ergodic discrete
invariant random subgroups using \cite[Theorem 2.9]{7s17}:
\begin{proposition}[Reduction to discrete invariant random subgroups]
\label{prop: subdg}
Let \(G\) be a connected, non-compact, center-free simple Lie group.
Suppose that every ergodic \(G\)-invariant Borel probability measure
\(\nu\) on \(\subd\) is of one of the following two forms:
\begin{enumerate}
    \item \(\nu=\delta_{\{e\}}\);
    \item \(\nu=\nu_\Gamma\) for some lattice \(\Gamma<G\), where
    \[
        \nu_\Gamma
        :=
        (\iota_\Gamma)_*m_{G/\Gamma},
        \qquad
        \iota_\Gamma(g\Gamma)=g\Gamma g^{-1},
    \]
    and \(m_{G/\Gamma}\) denotes the normalized \(G\)-invariant
    probability measure on \(G/\Gamma\).
\end{enumerate}
Then every ergodic p.m.p.\ action of \(G\) on a standard probability
space is either essentially free or essentially transitive.
\end{proposition}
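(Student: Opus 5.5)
The plan is to pass from the action to its stabilizer map and then read off the dichotomy from the classification assumed in the hypothesis. Let $G\curvearrowright(X,\nu)$ be an ergodic p.m.p.\ action on a standard probability space, and let $\on{Stab}:X\to\on{Sub}(G)$, $x\mapsto\stab_G(x)$. This map is Borel by standard arguments (e.g.\ via a countable family of test sets in the Chabauty topology). It is $G$-equivariant for the conjugation action, $\on{Stab}(gx)=g\,\on{Stab}(x)g^{-1}$. Hence $\lambda:=\on{Stab}_*\nu$ is a $G$-invariant probability measure on $\on{Sub}(G)$, i.e.\ an invariant random subgroup. It is ergodic, being a factor of the ergodic measure $\nu$.

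The first real step is to show that $\lambda$ is supported on $\subd$. This is where \cite[Theorem 2.9]{7s17} (Borel density for IRS's) enters. An ergodic IRS of a connected simple Lie group is either supported on discrete subgroups, or is $\delta_G$. Equivalently, the identity component of a random stabilizer is a.s.\ a normal connected subgroup, hence trivial or $G$ by simplicity.

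\begin{itemize}
\item If $\lambda=\delta_G$, then every point is fixed. Ergodicity of a standard space then forces $\nu$ to be a point mass, which is a single (transitive) orbit.
\item Otherwise $\lambda$ lives on $\subd$, and the hypothesis applies.
\end{itemize}

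The main obstacle is this Borel density step. If \cite[Theorem 2.9]{7s17} is stated only as ``a.s.\ discrete or the whole group,'' I will invoke it directly. If not, I would argue via the Lie algebra of the identity component: it gives an invariant measure on a Grassmannian, which by the Furstenberg/Borel density argument is supported on $\on{Ad}(G)$-fixed subspaces, hence on $0$ or $\g$.

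Now apply the hypothesis to $\lambda$ on $\subd$. In case (a), $\lambda=\delta_{\{e\}}$, so $\stab_G(x)=\{e\}$ for $\nu$-a.e.\ $x$, and the action is essentially free. In case (b), $\lambda=\nu_\Gamma$ for a lattice $\Gamma$, so a.e.\ stabilizer is a conjugate of $\Gamma$.

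It remains to deduce essential transitivity in case (b). Fix the Borel set $X_\Gamma=\{x:\stab_G(x)=\Gamma\}$. The conjugates of $\Gamma$ form the orbit $G/N_G(\Gamma)$. Since $\Gamma$ is a lattice in a center-free simple group, $N_G(\Gamma)$ is discrete and contains $\Gamma$ with finite index, by Borel density. The map $G\times X_\Gamma\to X$, $(g,x)\mapsto gx$, is onto a conull set. Two points $x,y\in X_\Gamma$ lie in the same $G$-orbit only via elements of $N_G(\Gamma)$, so the orbit equivalence on $X$ reduces to the action of the countable group $N_G(\Gamma)/\Gamma$ on $X_\Gamma$.

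I then transport $\nu$ to $G/\Gamma\times X_\Gamma$ (a measurable section argument), obtaining a $G$-invariant decomposition of $\nu$. Ergodicity of $\nu$ forces the induced measure on $X_\Gamma/N_G(\Gamma)$ to be ergodic for a trivial action. It is therefore a point mass, since the quotient is standard: the countable group action makes it smooth. Hence $\nu$ is concentrated on a single orbit $G\cdot x_0\cong G/\Gamma$, which is essential transitivity. I expect this last measurable bookkeeping to be routine, so the genuine content is the Borel density reduction above.
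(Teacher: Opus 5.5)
Your proposal is correct and takes the route the paper intends. The paper gives no written argument beyond the pointer to \cite[Theorem 2.9]{7s17}: pass to the stabilizer IRS, use Borel density to land either at $\delta_G$ or on $\subd$, and apply the assumed classification. You also spell out the step from $\lambda=\nu_\Gamma$ to essential transitivity, which the paper leaves implicit.

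There is one slip in that last step. The orbit relation on $X_\Gamma$ is smooth because $N_G(\Gamma)/\Gamma$ is \emph{finite}, as you observed earlier, not merely because it is countable; countable group actions need not be smooth. Finiteness makes the following map, with $s$ a Borel section of $G\to G/N_G(\Gamma)\cong\{\Gamma^g\}$, a $G$-invariant Borel map into a standard space:
\[
x\mapsto \bigl(N_G(\Gamma)\text{-orbit of } s(\stab_G(x))^{-1}x \text{ in } X_\Gamma\bigr).
\]
It is therefore $\nu$-a.e.\ constant by ergodicity. This also spares you from putting a measure on the $\nu$-null set $X_\Gamma$.
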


The following is an immediate consequence of Kakutani's random ergodic theorem \cite{kakutani1951random}.

\begin{claim}\label{cl: stat ergodicity}
    For $\nu$-almost every $x\in \subd$, the sequence $\rho_n=\frac{1}{n}\sum_{i=1}^{n}\mu^{*i}*\delta_{x}$ converges weakly to $\nu$.
\end{claim}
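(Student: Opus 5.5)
The plan is to apply Kakutani's random ergodic theorem to the skew-product system built from the i.i.d.\ random walk and then project to $\subd$. Let $\Omega=G^{\NN}$ carry the product measure $\mu^{\otimes\NN}$. Let $\sigma$ be the shift on $\Omega$, and define
\[
T(\omega,x)=(\sigma\omega,\omega_1 x).
\]
Since $\nu$ is $G$-invariant, it is in particular $\mu$-stationary. Kakutani's theorem then says that $\mu^{\otimes\NN}\otimes\nu$ is $T$-invariant. Moreover, when $\nu$ is ergodic for the $G$-action, the product measure is $T$-ergodic; this is the standard equivalence in Kakutani's random ergodic theorem. The one point to check is that ergodicity of $\nu$ under $G$ implies ergodicity under the semigroup generated by $\operatorname{supp}\mu$. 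This holds because $\mu$ is the heat kernel, whose support is all of $G$ and which is absolutely continuous. Hence a $\mu$-stationary function, that is, a function $h$ with $\mu * h=h$, is $G$-invariant $\nu$-almost everywhere, by the usual martingale/Hilbert space argument for invariant measures.

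Next we would fix a countable family $\{\varphi_j\}$ of bounded continuous functions on $\subd$ that is convergence-determining; this is possible because $\subd$ is a compact metrizable space. For each $j$ we apply the pointwise ergodic theorem to $F_j(\omega,x)=\varphi_j(x)$ along $T$. This gives
\[
\frac1n\sum_{i=1}^n\varphi_j(\omega_i\cdots\omega_1x)\to\int\varphi_j\,d\nu
\]
for $\mu^{\otimes\NN}\otimes\nu$-almost every $(\omega,x)$.

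The main subtlety is that the claim concerns the \emph{averaged} measures $\rho_n$, not individual trajectories. This turns out to be the easier direction. Integrating the pointwise statement in $\omega$ by dominated convergence (the functions $\varphi_j$ are bounded) gives
\[
\int\varphi_j\,d\rho_n=\int\frac1n\sum_{i=1}^n\varphi_j(\omega_i\cdots\omega_1x)\,d\mu^{\otimes\NN}(\omega)\to\int\varphi_j\,d\nu
\]
for every $x$ in a set of full $\nu$-measure. That set is obtained by Fubini: for $\nu$-almost every $x$ the convergence holds for $\mu^{\otimes\NN}$-almost every $\omega$. Here we used that the law of $\omega_i\cdots\omega_1$ is $\mu^{*i}$.

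Intersecting these full-measure sets over the countably many $j$ gives $\rho_n\to\nu$ weakly for $\nu$-almost every $x$. The only real obstacle is the ergodicity transfer from $G$ to the random walk. If needed, one could avoid it entirely: apply the ergodic decomposition of $\mu^{\otimes\NN}\otimes\nu$ with respect to $T$, and note that the limit function $\int\varphi_j$ over the fibre is $\mu$-harmonic in $x$, hence $G$-invariant, hence constant.
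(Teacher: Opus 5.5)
Your argument is correct and follows the paper's approach: the paper justifies the claim only by invoking Kakutani's random ergodic theorem, and you supply the details, namely the skew product, the transfer of ergodicity via the Hilbert-space argument that $\mu$-harmonic functions are $G$-invariant, and the passage from trajectories to the averaged measures $\rho_n$ by Fubini and dominated convergence. One harmless slip: $\subd$ is not compact (it is an open subset of the compact Chabauty space $\on{Sub}(G)$), but it is Polish, so a countable convergence-determining family of bounded continuous functions still exists and the argument goes through unchanged.
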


\begin{proof}[Proof of Theorem \ref{thm: C}]
    By Proposition \ref{prop: subdg}, we can suppose $\nu$ is an ergodic measure on $\subd$.
    Let $x\in \subd$ be one of the points guaranteed by Claim \ref{cl: stat ergodicity}.
    If $x$ corresponds to a discrete subgroup which is a lattice, the measures $\rho_n$ must converge to the uniform measure on $G/\Gamma$ (since under the isomorphism between $G/\Gamma$ and the orbit, the limit is pushed to $\nu$, an invariant probability measure and the Haar measure is unique) and so $\nu$ is the same uniform measure and we are done.
    
    Otherwise, $x$ corresponds to a discrete non-lattice subgroup. 
    By Theorem \ref{thm: qfg} there exists some $c_G>0$ such that for every $n$ large enough, the measures $\rho_n$ satisfy 
    \begin{equation}
        \rho_n(\Ii_{c_\Gamma\log\log\log\log n})\geq c_G.
    \end{equation}
    Note that $\bigcap_n \Ii_{c_\Gamma\log \log \log \log n}=\{e\}$ is the trivial subgroup, so by the weak convergence of $\rho_n$ to $\nu$ we obtain for every $n_0\in \NN$
    \begin{equation}
        \nu(\Ii_{c_\Gamma\log\log\log\log n_0})=\lim_{n\rightarrow\infty}\rho_n(\Ii_{c_\Gamma\log\log\log\log n_0})\geq \lim_{n\rightarrow\infty}\rho_n(\Ii_{c_\Gamma\log\log\log\log n})\geq c_G
    \end{equation}
    and since $\Ii_{c_\Gamma\log\log\log n_0}$ is a descending sequence
    \begin{equation}
        \nu(\{e\})=\nu(\bigcap_{n_0} \Ii_{c_\Gamma\log\log\log\log n_0})\geq c_G.
    \end{equation}
    Since $\delta_{\{e\}}$ is a $G$-invariant measure and since $\nu$ was assumed ergodic, we obtain that $\nu=\delta_{\{e\}}$ as desired
    Since $\delta_{\{e\}}$ is a $G$-invariant measure and since $\nu$ was assumed ergodic, we obtain that $\nu=\delta_{\{e\}}$ as desired. 
\end{proof}

\appendix

\section{A two-sided local log-Lipschitz bound for the heat kernel}
\label{app:two-sided-heat-kernel-log-lipschitz}

Let \(X=K\backslash G\), let \(o=eK\), and let \(p_t\) be the \(K\)-bi-invariant
heat kernel on \(G\), normalized as a probability density with respect to
Haar measure. We put
\[
        \mu:=p_1,\qquad q_n:=\frac{d\mu^{*n}}{dm_G}.
\]
By the heat-kernel semigroup property,
\[
        q_n=p_n .
\]
For \(g\in G\), write
\[
        r(g):=d_X(o,gK).
\]
Equivalently, if \(g=k_1\exp(H)k_2\) is its Cartan decomposition with
\(H\in\overline{\mathfrak a^+}\), then \(r(g)=\|H\|\).

\begin{proposition}[Two-sided local log-Lipschitz estimate]
\label{prop:two-sided-heat-kernel-log-lipschitz}
For every \(R_0>0\) there exists a constant \(C=C(G,R_0)>0\) such that
for every \(n\geq 1\), every \(g\in G\), and every \(h\in G\) with
\(d_X(o,hK)\leq R_0\), one has
\[
        \exp\left(
            -C\left(1+\frac{r(g)}{n}\right)
        \right)
        \leq
        \frac{q_n(hg)}{q_n(g)}
        \leq
        \exp\left(
            C\left(1+\frac{r(g)}{n}\right)
        \right).
\]
In particular, taking \(R_0\) large enough so that \(G_1K/K\subseteq
B_X(o,R_0)\), there is \(C_G>0\) such that for every \(h\in G_1\),
\[
        \exp\left(
            -C_G\left(1+\frac{r(g)}{n}\right)
        \right)
        \leq
        \frac{q_n(hg)}{q_n(g)}
        \leq
        \exp\left(
            C_G\left(1+\frac{r(g)}{n}\right)
        \right).
\]
Consequently, if \(r(g)\leq A n\), then
\[
        e^{-C_G(1+A)}
        \leq
        \frac{q_n(hg)}{q_n(g)}
        \leq
        e^{C_G(1+A)}.
\]
Thus on the region \(r(g)=O(n)\), the densities \(q_n\) are uniformly
log-Lipschitz.
By taking the average, we obtain the same estimates for the density function of $\nu_n=\frac{1}{n}\sum_{i=1}^n\mu^{*i}$.
\end{proposition}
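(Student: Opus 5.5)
The plan is to exploit the fact that $q_n=p_n$ is $K$-bi-invariant, so that $q_n(g)=\phi_n(r(g))$ depends only on the Cartan projection. Both the function and the estimate then reduce to a comparison of the radial profile at two radii differing by at most $R_0$. Since $d_X(o,hgK)\le d_X(o,gK)+d_X(gK,hgK)$ and, by $G$-invariance of $d_X$, $d_X(gK,hgK)$ is controlled uniformly for $h$ in a compact set, we get $|r(hg)-r(g)|\le R_0'$ for some $R_0'=R_0'(G,R_0)$. (For $h$ in a fixed compact set, left multiplication moves the point $gK$ a bounded amount. To make this precise, use that the Riemannian metric on $G$ is right-invariant, so $d_G(hg,g)=d_G(h,e)$, which is bounded, and then project to $X$.) Hence it suffices to prove a radial estimate of the form
\[
\Bigl|\log\frac{\phi_n(r')}{\phi_n(r)}\Bigr|\le C\Bigl(1+\frac{r}{n}\Bigr)\qquad\text{whenever } |r-r'|\le R_0',
\]
or more precisely its version in terms of $H\in\overline{\mathfrak a^+}$, since the heat kernel is not literally radial in $\|H\|$.

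For this I would invoke the sharp two-sided heat kernel bounds of Anker--Ji (and Anker--Ostellari) on noncompact symmetric spaces, valid uniformly for $t\ge1$:
\[
p_t(\exp H)\asymp t^{-\ell/2-|\Sigma_0^+|}\prod_{\alpha\in\Sigma_0^+}(1+\alpha(H))\Bigl(1+\frac{1+\alpha(H)}{t}\Bigr)^{m_\alpha+m_{2\alpha}/2-1}e^{-|\rho|^2t-\rho(H)-\|H\|^2/4t}.
\]
In the ratio $q_n(hg)/q_n(g)$ the powers of $t$ and the factor $e^{-|\rho|^2t}$ cancel. The remaining terms are handled one at a time, with $H$ and $H'$ the Cartan projections of $g$ and $hg$, which satisfy $\|H-H'\|\le R_0'$ by the same bounded-displacement argument applied to $\mathfrak a$-projections; the Cartan projection is $1$-Lipschitz. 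The polynomial factors change by bounded multiplicative constants, because $(1+\alpha(H'))/(1+\alpha(H))$ is bounded when the arguments differ by $O(1)$. The factor $e^{-\rho(H)}$ changes by at most $e^{|\rho|R_0'}$. Finally, $\bigl|\|H'\|^2-\|H\|^2\bigr|/4n\le R_0'(2\|H\|+R_0')/4n$, which is $O(1+r(g)/n)$. Exponentiating gives both bounds with a constant $C(G,R_0)$, and the implicit constants in $\asymp$ are absorbed into the additive $1$.

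The consequences are then immediate. Choosing $R_0$ with $G_1K\subseteq B_X(o,R_0)$ gives the $G_1$ statement, and $r\le An$ gives the uniform constant. For the Ces\`aro average $\nu_n$, the density is $\frac1n\sum_i q_i$. When $r(g)\le An$ we only have $r(g)\le (An/i)\,i$, so a termwise bound would degrade for small $i$. To handle this, I would use the fact that a ratio of sums of positive terms lies between the minimum and maximum of the termwise ratios, restricted to those $i$ that dominate the sum. The small-$i$ terms are exponentially negligible when $r\gg i$, because of the factor $e^{-r^2/4i}$, so the dominant indices satisfy $r/i=O(1+A)$ and the same bound survives. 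This is the main technical subtlety.

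I expect the main obstacle to be the uniformity of the Anker--Ji estimates near the walls of the Weyl chamber and for $t$ large compared to $r$. The product form is designed for this, so I would cite it rather than reprove it. A secondary point is to check carefully that the Cartan projection moves by $O(1)$ under left multiplication by $h$. This follows from $d_X(hgK,gK)=d_X(g^{-1}hgK,o)$ being bounded only if conjugation is controlled, which it is not in general. So I would instead argue directly that $d_X(o,hgK)$ and $d_X(o,gK)$ differ by at most $d_X(o,h^{-1}K)\le R_0$ via the triangle inequality through $h^{-1}K$, and that the vector Cartan projection satisfies the analogous Lipschitz bound (Kostant convexity / the triangle inequality for the $\mathfrak a^+$-valued distance).
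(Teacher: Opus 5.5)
Your argument for the main estimate follows the paper's proof: bi-$K$-invariance, an $O(1)$ bound on $\|\kappa(hg)-\kappa(g)\|$, and a factor-by-factor comparison in the Anker--Ji two-sided estimate. Your form of that estimate matches the paper's once your exponent is read as $(m_\alpha+m_{2\alpha})/2-1$. For the displacement bound, use the version in your last paragraph, $d_X(o,hgK)=d_X(h^{-1}K,gK)$ plus the triangle inequality, and not the $d_X(gK,hgK)$ bound from your first paragraph, which fails as you yourself observed. Your treatment of the Ces\`aro average goes beyond the paper, which asserts the averaged bound without argument. You are right that termwise averaging only gives $\exp\bigl(C(1+r(g)/i)\bigr)$ with $i$ possibly equal to $1$. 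Your dominant-index argument is what is actually needed there: discard the indices $i\lesssim r/A'$ using the factor $e^{-r^2/4i}$, with $A'$ depending on $G$, $R_0$ and $A$.
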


\begin{proof}
Let \(\kappa(g)\in\overline{\mathfrak a^+}\) denote the Cartan projection:
\[
        g\in K\exp(\kappa(g))K.
\]
Since \(p_n\) is \(K\)-bi-invariant, we may write
\[
        p_n(g)=p_n(\exp H),
        \qquad H:=\kappa(g).
\]
Likewise, put
\[
        H':=\kappa(hg).
\]
Because the Cartan projection is Lipschitz on the symmetric space, there is
a constant \(C_0=C_0(G,R_0)\) such that
\[
        \|H'-H\|\leq C_0
\]
whenever \(d_X(o,hK)\leq R_0\).

We now use the standard sharp heat-kernel estimate on noncompact symmetric
spaces. There is a constant \(C_1=C_1(G)\geq 1\) such that, for every
\(t>0\) and every \(H\in\overline{\mathfrak a^+}\),
\[
        C_1^{-1} F_t(H)
        \leq
        p_t(\exp H)
        \leq
        C_1 F_t(H),
\]
where
\[
        F_t(H)
        =
        t^{-d/2}
        \left[
        \prod_{\alpha\in\Sigma^+_0}
        (1+\alpha(H))
        (1+t+\alpha(H))^{(m_\alpha+m_{2\alpha})/2-1}
        \right]
        \exp\left(
            -\|\rho\|^2 t
            -\rho(H)
            -\frac{\|H\|^2}{4t}
        \right).
\]
Here \(d=\dim X\), \(\Sigma^+_0\) denotes the set of positive indivisible
restricted roots, and \(\rho\) is the usual half-sum of positive restricted
roots with multiplicities.

It is therefore enough to compare \(F_n(H')\) and \(F_n(H)\). First consider
the polynomial factor
\[
        P_t(H)
        :=
        \prod_{\alpha\in\Sigma^+_0}
        (1+\alpha(H))
        (1+t+\alpha(H))^{(m_\alpha+m_{2\alpha})/2-1}.
\]
Since \(\|H'-H\|\leq C_0\), for every \(\alpha\in\Sigma^+_0\) we have
\[
        |\alpha(H')-\alpha(H)|\leq C_\alpha C_0.
\]
Because \(\alpha(H),\alpha(H')\geq 0\), this implies
\[
        C_2^{-1}
        \leq
        \frac{1+\alpha(H')}{1+\alpha(H)}
        \leq
        C_2
\]
and also
\[
        C_2^{-1}
        \leq
        \frac{1+n+\alpha(H')}{1+n+\alpha(H)}
        \leq
        C_2,
\]
where \(C_2=C_2(G,R_0)\). Since there are only finitely many roots and the
exponents are fixed, we get
\[
        C_3^{-1}
        \leq
        \frac{P_n(H')}{P_n(H)}
        \leq
        C_3
\]
for some \(C_3=C_3(G,R_0)\).

Next compare the exponential terms. The \(-\|\rho\|^2 n\) terms cancel.
For the linear term,
\[
        |\rho(H')-\rho(H)|
        \leq
        \|\rho\|\,\|H'-H\|
        \leq
        C_4.
\]
For the quadratic term,
\[
        \left|
        \frac{\|H'\|^2-\|H\|^2}{4n}
        \right|
        =
        \frac{|\,\langle H'-H,H'+H\rangle\,|}{4n}
        \leq
        \frac{\|H'-H\|(\|H'\|+\|H\|)}{4n}.
\]
Since \(\|H'-H\|\leq C_0\) and \(\|H'\|\leq \|H\|+C_0\), this gives
\[
        \left|
        \frac{\|H'\|^2-\|H\|^2}{4n}
        \right|
        \leq
        C_5\left(\frac{\|H\|}{n}+\frac{1}{n}\right)
        \leq
        C_5\left(1+\frac{\|H\|}{n}\right),
\]
because \(n\geq 1\).

Combining the polynomial comparison, the linear exponential comparison, and
the quadratic exponential comparison, we obtain
\[
        \exp\left(
            -C_6\left(1+\frac{\|H\|}{n}\right)
        \right)
        \leq
        \frac{F_n(H')}{F_n(H)}
        \leq
        \exp\left(
            C_6\left(1+\frac{\|H\|}{n}\right)
        \right).
\]
Finally, using the two-sided comparison between \(p_n\) and \(F_n\),
\[
        \frac{p_n(\exp H')}{p_n(\exp H)}
        \leq
        C_1^2
        \frac{F_n(H')}{F_n(H)}
        \leq
        \exp\left(
            C\left(1+\frac{\|H\|}{n}\right)
        \right),
\]
and similarly
\[
        \frac{p_n(\exp H')}{p_n(\exp H)}
        \geq
        C_1^{-2}
        \frac{F_n(H')}{F_n(H)}
        \geq
        \exp\left(
            -C\left(1+\frac{\|H\|}{n}\right)
        \right).
\]
Since \(\|H\|=r(g)\), and since \(q_n=p_n\), this proves the proposition.

\end{proof}

The following lemma explains how to obtain a version of the Proposition \ref{prop:two-sided-heat-kernel-log-lipschitz} for $G/\Gamma$.

\section{Concentration of the heat kernel around its drift}
\label{app:heat-kernel-drift-concentration}

Let \(X=K\backslash G\), \(o=eK\), and let \(p_t\) be the \(K\)-bi-invariant heat
kernel on \(G\), normalized as a probability density with respect to Haar
measure. We keep the convention of Appendix \ref{app:two-sided-heat-kernel-log-lipschitz}: 
\[
        \mu:=p_1,
        \qquad
        \mu^{*n}=p_n .
\]
Let
\[
        g\in K\exp(\kappa(g))K,
        \qquad
        \kappa(g)\in\overline{\mathfrak a^+},
\]
be the Cartan projection, and put
\[
        r(g):=d_X(o,gK)=\|\kappa(g)\|.
\]
As usual,
\[
        \rho=\frac12\sum_{\alpha\in\Sigma^+}m_\alpha\alpha
\]
is identified with the corresponding vector in \(\mathfrak a\) using the
chosen inner product.

\begin{proposition}[Exponential concentration around the Cartan drift]
\label{prop:heat-kernel-cartan-concentration}
For every \(\varepsilon>0\) there are constants
\(C_\varepsilon,a_\varepsilon>0\), depending only on \(G\), the metric
normalization, and \(\varepsilon\), such that for every \(n\geq 1\),
\[
        \mu^{*n}
        \left(
            \left\{
            g\in G:
            \|\kappa(g)-2n\rho\|\geq \varepsilon n
            \right\}
        \right)
        \leq
        C_\varepsilon e^{-a_\varepsilon n}.
\]
Consequently,
\[
        \mu^{*n}
        \left(
            \left\{
            g\in G:
            \bigl|r(g)-2\|\rho\|n\bigr|\geq \varepsilon n
            \right\}
        \right)
        \leq
        C_\varepsilon e^{-a_\varepsilon n}.
\]
In particular, for every pair of constants \(c,c'\) satisfying
\[
        0<c<2\|\rho\|<c',
\]
there are constants \(C,a>0\) such that
\[
        \mu^{*n}\bigl(\{g\in G: cn<r(g)<c'n\}\bigr)
        \geq
        1-Ce^{-an}.
\]
For example, one may take \(c=\|\rho\|\) and \(c'=3\|\rho\|\).
\end{proposition}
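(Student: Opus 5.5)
The plan is to reduce the statement to a large-deviation estimate for the radial part of Brownian motion on $X=K\backslash G$, using the two-sided heat kernel asymptotics already invoked in Appendix~\ref{app:two-sided-heat-kernel-log-lipschitz}. Since $p_n$ is $K$-bi-invariant, the law of $\kappa(g)$ under $\mu^{*n}$ is the pushforward of $p_n\,dm_G$ under the Cartan projection. In polar coordinates $g=k_1\exp(H)k_2$, Haar measure is
\[
J(H)\,dH\,dk_1\,dk_2,\qquad J(H)\asymp\prod_{\alpha\in\Sigma^+}\big(\sinh\alpha(H)\big)^{m_\alpha}.
\]
So the density of $\kappa_*\mu^{*n}$ on $\overline{\mathfrak a^+}$ is comparable to $F_n(H)J(H)$. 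On the bulk of the chamber $J(H)\asymp e^{2\rho(H)}\prod_\alpha\min(1,\alpha(H))^{m_\alpha}$, and the combined exponent is
\[
-\|\rho\|^2n-\rho(H)+2\rho(H)-\frac{\|H\|^2}{4n}=-\frac{\|H-2n\rho\|^2}{4n}.
\]

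Hence, up to polynomial factors in $n$ and $\|H\|$, the law of $\kappa(g)$ under $\mu^{*n}$ is a Gaussian centred at $2n\rho$ with variance of order $n$. Here we use that $2n\rho$ lies in the open chamber, because $\rho$ is regular.

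The estimate then follows by integration. Split $\{H\in\overline{\mathfrak a^+}:\|H-2n\rho\|\geq\varepsilon n\}$ into dyadic shells $\{2^j\varepsilon n\le\|H-2n\rho\|<2^{j+1}\varepsilon n\}$. On the $j$-th shell the density is at most
\[
\mathrm{poly}(n,2^j)\,e^{-4^j\varepsilon^2 n/4},
\]
and the shell has Lebesgue volume $O((2^jn)^{\dim\mathfrak a})$. Summing over $j$ gives $C_\varepsilon e^{-a_\varepsilon n}$ with any $a_\varepsilon<\varepsilon^2/4$. The polynomial prefactors $t^{-d/2}$ and $(1+t+\alpha(H))^{\cdots}$ are absorbed by shrinking $a_\varepsilon$ and enlarging $C_\varepsilon$; for small $n$ the bound is trivial after enlarging $C_\varepsilon$. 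The second statement follows from the triangle inequality $|r(g)-2\|\rho\|n|\le\|\kappa(g)-2n\rho\|$. The third follows by taking $\varepsilon=\min(2\|\rho\|-c,\;c'-2\|\rho\|)$.

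\textbf{Main obstacle.} The delicate point is the Jacobian comparison near the walls of the chamber, where $\sinh\alpha(H)$ is not comparable to $e^{\alpha(H)}$. There I would only use the upper bound $\sinh\alpha(H)\le e^{\alpha(H)}$, which suffices because only an upper bound on the tail mass is needed. This gives density $\lesssim\mathrm{poly}\cdot e^{-\|H-2n\rho\|^2/4n}$ everywhere on $\overline{\mathfrak a^+}$, with no lower bound required.

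A cleaner fallback, if one distrusts the precise form of $F_t$, is a martingale argument. The radial process in $\mathfrak a$ satisfies $dH_t=\sqrt2\,dB_t+\nabla\log J(H_t)\,dt$, with drift $\nabla\log J\to2\rho$ away from the walls. One would then apply exponential martingale concentration, using that the time spent near walls has exponentially small probability. I would try the kernel-integration route first.
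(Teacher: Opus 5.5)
Your proposal is correct and follows essentially the same route as the paper: the global heat-kernel upper bound combined with the Cartan integration formula, the Jacobian bound $J(H)\lesssim \mathrm{poly}(\|H\|)\,e^{2\rho(H)}$ (so the walls cause no trouble), and completing the square to obtain a Gaussian centred at $2n\rho$ with variance of order $n$. The remaining differences are cosmetic: you integrate the tail over dyadic shells where the paper substitutes $Y=H-2n\rho=\sqrt n\,Z$, and both arguments derive the two consequences from the triangle inequality and the choice $\varepsilon<\min(2\|\rho\|-c,\,c'-2\|\rho\|)$.
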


\begin{proof}
We use the sharp global heat-kernel estimate on noncompact symmetric
spaces. There are constants \(A,M>0\), depending only on \(G\), such that
for all \(t\geq 1\) and all \(H\in\overline{\mathfrak a^+}\),
\[
        p_t(\exp H)
        \leq
        A\, t^M(1+\|H\|)^M
        \exp\left(
            -\|\rho\|^2t-\rho(H)-\frac{\|H\|^2}{4t}
        \right).
\]
Here polynomial factors have been absorbed into \(t^M(1+\|H\|)^M\).

By the Cartan integration formula, for every \(K\)-bi-invariant
nonnegative function \(F\),
\[
        \int_G F(g)\,dm_G(g)
        =
        c_G
        \int_{\overline{\mathfrak a^+}}
        F(\exp H)J(H)\,dH,
\]
where
\[
        J(H)
        =
        \prod_{\alpha\in\Sigma^+}
        \bigl(\sinh\alpha(H)\bigr)^{m_\alpha}.
\]
In particular, for another constant \(A>0\),
\[
        J(H)\leq A(1+\|H\|)^M e^{2\rho(H)}.
\]
Therefore, for \(t=n\),
\[
        p_n(\exp H)J(H)
        \leq
        A\,n^M(1+\|H\|)^M
        \exp\left(
            -\|\rho\|^2n+\rho(H)-\frac{\|H\|^2}{4n}
        \right).
\]
The exponent is exactly a completed square:
\[
        -\|\rho\|^2n+\rho(H)-\frac{\|H\|^2}{4n}
        =
        -\frac{\|H-2n\rho\|^2}{4n}.
\]
Thus
\[
        p_n(\exp H)J(H)
        \leq
        A\,n^M(1+\|H\|)^M
        \exp\left(
            -\frac{\|H-2n\rho\|^2}{4n}
        \right).
\]

Now let
\[
        E_{\varepsilon,n}
        :=
        \{H\in\overline{\mathfrak a^+}:
        \|H-2n\rho\|\geq \varepsilon n\}.
\]
By the preceding estimate and Cartan integration,
\[
        \mu^{*n}\{g:\|\kappa(g)-2n\rho\|\geq\varepsilon n\}
        \leq
        A n^M
        \int_{E_{\varepsilon,n}}
        (1+\|H\|)^M
        \exp\left(
            -\frac{\|H-2n\rho\|^2}{4n}
        \right)dH.
\]
Put \(Y=H-2n\rho\). Since
\[
        1+\|H\|
        \leq
        1+2n\|\rho\|+\|Y\|,
\]
the last integral is bounded by a polynomial in \(n\) times
\[
        \int_{\|Y\|\geq \varepsilon n}
        (1+\|Y\|)^M
        \exp\left(-\frac{\|Y\|^2}{4n}\right)dY.
\]
After the change of variables \(Y=\sqrt n\,Z\), this is bounded by
\[
        C n^{M'}
        \int_{\|Z\|\geq \varepsilon\sqrt n}
        (1+\|Z\|)^M
        e^{-\|Z\|^2/4}\,dZ
        \leq
        C_\varepsilon e^{-a_\varepsilon n},
\]
after increasing \(C_\varepsilon\) and decreasing \(a_\varepsilon\). This proves
the Cartan-projection concentration estimate.

The radial estimate follows immediately from
\[
        r(g)=\|\kappa(g)\|.
\]
Indeed,
\[
        \bigl|r(g)-2\|\rho\|n\bigr|
        =
        \bigl|\|\kappa(g)\|-\|2n\rho\|\bigr|
        \leq
        \|\kappa(g)-2n\rho\|.
\]
Therefore
\[
        \{g: |r(g)-2\|\rho\|n|\geq\varepsilon n\}
        \subseteq
        \{g:\|\kappa(g)-2n\rho\|\geq\varepsilon n\},
\]
which gives the second estimate.

Finally, if \(0<c<2\|\rho\|<c'\), choose
\[
        0<\varepsilon<
        \min\{2\|\rho\|-c,\;c'-2\|\rho\|\}.
\]
Then
\[
        |r(g)-2\|\rho\|n|<\varepsilon n
        \quad\Longrightarrow\quad
        cn<r(g)<c'n.
\]
Hence
\[
        \mu^{*n}\bigl(\{g:cn<r(g)<c'n\}\bigr)
        \geq
        1-C_\varepsilon e^{-a_\varepsilon n}.
\]
This proves the proposition.
\end{proof}

\section{Log Lipschitzity computations}
In our proof of Theorem \ref{thm: qfg}, it is crucial to deal with measures whose density is log-Lipschitz. While this is not the case "off the shelf" for the almost stationary measures we are given by averaging along a step-$n$ Brownian motion, it is easy to slightly perturb the density we have into a log-Lipschitz one. In summary, this is a consequence of heat-kernel estimates showing that the heat-kernel density is log-lipshchitz on a large ball about the identity: we can simply truncate the heat-kernel on such a large ball, then extend it thanks to McShane's extension theorem.   

We recall the definition of log-Lipschitzity. 
\begin{definition}[Log Lipschitzity]\label{def: loglip app}
Let \(X\) be a \(G\)-space, let \(f:X\to(0,\infty)\), and let
\(0<c_1\leq c_2<\infty\). We say that \(f\) is
\((c_1,c_2)\)-log-Lipschitz if
\[
        c_1
        \leq
        \frac{f(x)}{f(g^{-1}x)}
        \leq
        c_2
        \qquad
        \text{for every \(g\in G_1\) and \(x\in X\).}
\]
\end{definition}

We first record a periodization lemma that transfers local density estimates
from \(G\) to \(G/\Gamma\).

\begin{lemma}[Periodization from a large good ball]
\label{lem:large-ball-periodization-log-lip}
Let
\[
        q_n(g):=\frac1n\sum_{i=1}^n p_i(g)
\]
be the density of the Cesàro heat-kernel average on \(G\), and let
\[
        \rho_\Gamma(u\Gamma)
        :=
        \sum_{\gamma\in\Gamma}q_n(u\gamma)
\]
be its periodization to \(G/\Gamma\). Let
\[
        d\nu_n^\Gamma=\rho_\Gamma\,dm_{G/\Gamma}.
\]
Fix \(R_0>0\). Suppose that for each \(n\) there is a measurable set
\(E_n\subset G\) and a number \(\delta_n>0\) such that, for every \(a\in G_{R_0}\),
\begin{align}
        \int_{E_n^c}q_n(z)\,dm_G(z)
        &\leq \delta_n, \label{eq:good-ball-mass}\\
        \int_{(aE_n)^c}q_n(z)\,dm_G(z)
        &\leq \delta_n, \label{eq:translated-good-ball-mass}
\end{align}
and such that the upstairs density is uniformly locally log-Lipschitz on \(E_n\):
there is \(C_0=C_0(G,R_0)\geq 1\) such that
\begin{equation}
        C_0^{-1}
        \leq
        \frac{q_n(az)}{q_n(z)}
        \leq
        C_0
        \qquad
        \forall z\in E_n,\ \forall a\in G_{R_0}.
        \label{eq:upstairs-good-log-lip}
\end{equation}
Then for every discrete subgroup \(\Gamma\leq G\), every \(a\in G_{R_0}\), and every
\(\eta\in(0,1)\), there exists a measurable set
\[
        Y_{n,a,\Gamma}\subset G/\Gamma
\]
such that
\begin{equation}
        \nu_n^\Gamma(Y_{n,a,\Gamma})
        \geq
        1-\frac{2\delta_n}{\eta},
        \label{eq:quotient-good-set-large}
\end{equation}
and, for every \(x\in Y_{n,a,\Gamma}\),
\begin{equation}
        \frac{1}{C_0+\eta}
        \leq
        \frac{\rho_\Gamma(x)}{\rho_\Gamma(ax)}
        \leq
        \frac{C_0}{1-\eta}.
        \label{eq:quotient-good-log-lip}
\end{equation}
In particular, if \(\eta\leq 1/2\), then
\[
        C_1^{-1}
        \leq
        \frac{\rho_\Gamma(x)}{\rho_\Gamma(ax)}
        \leq
        C_1
        \qquad
        \forall x\in Y_{n,a,\Gamma},
\]
where \(C_1=C_1(G,R_0)\) is independent of \(n\), \(\Gamma\), and \(a\in G_{R_0}\).
\end{lemma}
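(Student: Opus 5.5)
The plan is to unfold the periodized densities into sums over $\Gamma$ and split each sum into a good part (lattice translates landing in $E_n$) and a bad part. Fix $a\in G_{R_0}$ and $\Gamma$, and for $u\in G$ write
\[
\rho_\Gamma(au\Gamma)=\sum_{\gamma\in\Gamma}q_n(au\gamma)=G_a(u)+B_a(u),
\]
where $G_a(u)$ collects the terms with $u\gamma\in E_n$ and $B_a(u)$ the remaining ones. For the good terms, \eqref{eq:upstairs-good-log-lip} applied with $z=u\gamma$ gives $C_0^{-1}q_n(u\gamma)\le q_n(au\gamma)\le C_0q_n(u\gamma)$. Summing over good $\gamma$,
\[
C_0^{-1}\,G_0(u)\le G_a(u)\le C_0\,G_0(u),
\]
where $G_0(u)=\sum_{u\gamma\in E_n}q_n(u\gamma)$ is the good part of $\rho_\Gamma(u\Gamma)$ itself. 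So the ratio $\rho_\Gamma(x)/\rho_\Gamma(ax)$ is controlled on any $x=u\Gamma$ at which both bad parts are small relative to the total.

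Define
\[
Y_{n,a,\Gamma}=\bigl\{x=u\Gamma:\ B_0(u)\le\eta\,\rho_\Gamma(x)\ \text{and}\ B_a(u)\le\eta\,\rho_\Gamma(x)\bigr\},
\]
where $B_0(u)=\sum_{u\gamma\notin E_n}q_n(u\gamma)$. This set is well defined, since replacing $u$ by $u\gamma'$ only reindexes the sums.

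\textbf{Measure of the complement (unfolding).} The key identity is
\[
\int_{G/\Gamma}B_0\,dm_{G/\Gamma}=\int_{E_n^c}q_n\,dm_G\le\delta_n .
\]
Markov's inequality then gives $\nu_n^\Gamma(\{B_0>\eta\rho_\Gamma\})=\int_{\{B_0>\eta\rho_\Gamma\}}\rho_\Gamma\,dm_{G/\Gamma}\le\eta^{-1}\delta_n$.

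For $B_a$, substitute $z=au\gamma$. The condition $u\gamma\notin E_n$ becomes $z\notin aE_n$. Left multiplication by $a$ preserves $m_G$, and unfolding gives
\[
\int_{G/\Gamma}B_a\,dm_{G/\Gamma}=\int_{(aE_n)^c}q_n\,dm_G\le\delta_n
\]
by \eqref{eq:translated-good-ball-mass}. The same Markov argument, with density $\rho_\Gamma$, bounds this exceptional set by $\delta_n/\eta$. Together these give \eqref{eq:quotient-good-set-large}.

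\textbf{Ratio estimate on $Y_{n,a,\Gamma}$.} On $Y_{n,a,\Gamma}$ we have $G_0\ge(1-\eta)\rho_\Gamma(x)$. Hence
\[
\rho_\Gamma(ax)=G_a+B_a\ \ge\ C_0^{-1}(1-\eta)\rho_\Gamma(x),
\]
which gives the upper bound $C_0/(1-\eta)$. For the lower bound,
\[
\rho_\Gamma(ax)\le C_0G_0+B_a\le C_0\rho_\Gamma(x)+\eta\rho_\Gamma(x)=(C_0+\eta)\rho_\Gamma(x).
\]
For $\eta\le1/2$ both bounds lie within $C_1=2C_0+1$, which gives the final statement.

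\textbf{Expected obstacle.} The delicate point is the normalization of the bad set for $B_a$. Its smallness is naturally measured against $\rho_\Gamma(x)$ and not $\rho_\Gamma(ax)$. This is what makes the Markov bound work with density $\rho_\Gamma$, and it forces the asymmetric constants $C_0+\eta$ and $C_0/(1-\eta)$. I would double-check that the unfolding identity holds for arbitrary discrete $\Gamma$, including infinite covolume, with the Haar measures compatibly normalized.
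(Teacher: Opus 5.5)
Your proof is correct and follows the paper's argument essentially verbatim: the same splitting of $\rho_\Gamma(x)$ and $\rho_\Gamma(ax)$ according to whether $u\gamma\in E_n$, the same unfolding and Markov bounds with respect to $\nu_n^\Gamma$ for the two bad parts, and the same two-sided ratio estimates on the intersection set. The unfolding (Weil) identity you wanted to double-check holds for every discrete $\Gamma$, since $G$ and $\Gamma$ are both unimodular, so infinite covolume causes no difficulty.
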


\begin{proof}
Write
\[
        \rho_\Gamma(u\Gamma)
        :=
        \sum_{\gamma\in\Gamma} q_n(u\gamma).
\]
For the good set \(E_n\subset G\), define
\begin{align*}
        \rho_\Gamma^E(u\Gamma)
        &:=
        \sum_{\gamma:\,u\gamma\in E_n} q_n(u\gamma),\\
        \rho_\Gamma^{\mathrm{bad}}(u\Gamma)
        &:=
        \rho_\Gamma(u\Gamma)-\rho_\Gamma^E(u\Gamma),\\
        B_{a,\Gamma}(u\Gamma)
        &:=
        \sum_{\gamma:\,u\gamma\notin E_n} q_n(a u\gamma).
\end{align*}
By unfolding,
\begin{align*}
        \int_{G/\Gamma}\rho_\Gamma^{\mathrm{bad}}(x)\,dm_{G/\Gamma}(x)
        &=
        \int_{E_n^c} q_n(z)\,dm_G(z)
        \leq \delta_n,\\
        \int_{G/\Gamma}B_{a,\Gamma}(x)\,dm_{G/\Gamma}(x)
        &=
        \int_{E_n^c}q_n(a z)\,dm_G(z)
        =
        q_n(aE_n^c)
        =
        q_n((aE_n)^c)
        \leq \delta_n.
\end{align*}
Hence, by Markov's inequality with respect to the probability measure
\[
        d\nu_n^\Gamma=\rho_\Gamma\,dm_{G/\Gamma},
\]
we have
\begin{align*}
        \nu_n^\Gamma
        \left(
        \rho_\Gamma^{\mathrm{bad}}>\eta\rho_\Gamma
        \right)
        &\leq
        \frac1\eta
        \int_{G/\Gamma}\rho_\Gamma^{\mathrm{bad}}\,dm_{G/\Gamma}
        \leq
        \frac{\delta_n}{\eta},\\
        \nu_n^\Gamma
        \left(
        B_{a,\Gamma}>\eta\rho_\Gamma
        \right)
        &\leq
        \frac1\eta
        \int_{G/\Gamma}B_{a,\Gamma}\,dm_{G/\Gamma}
        \leq
        \frac{\delta_n}{\eta}.
\end{align*}
Set
\[
        Y_{n,a,\Gamma}
        :=
        \left\{
        \rho_\Gamma^{\mathrm{bad}}\leq \eta\rho_\Gamma
        \right\}
        \cap
        \left\{
        B_{a,\Gamma}\leq \eta\rho_\Gamma
        \right\}.
\]
Then
\[
        \nu_n^\Gamma(Y_{n,a,\Gamma})
        \geq
        1-\frac{2\delta_n}{\eta}.
\]

Now fix \(x=u\Gamma\in Y_{n,a,\Gamma}\). Since
\[
        \rho_\Gamma^{\mathrm{bad}}(x)
        \leq
        \eta\rho_\Gamma(x),
\]
we have
\[
        \rho_\Gamma^E(x)
        =
        \rho_\Gamma(x)-\rho_\Gamma^{\mathrm{bad}}(x)
        \geq
        (1-\eta)\rho_\Gamma(x).
\]
Using the upstairs comparison on \(E_n\), namely
\[
        C_0^{-1}q_n(z)\leq q_n(az)\leq C_0q_n(z)
        \qquad
        \forall z\in E_n,
\]
we obtain the lower bound
\begin{align*}
        \rho_\Gamma(a x)
        &=
        \sum_{\gamma\in\Gamma}q_n(a u\gamma)\\
        &\geq
        \sum_{\gamma:\,u\gamma\in E_n}q_n(a u\gamma)\\
        &\geq
        C_0^{-1}
        \sum_{\gamma:\,u\gamma\in E_n}q_n(u\gamma)\\
        &=
        C_0^{-1}\rho_\Gamma^E(x)\\
        &\geq
        C_0^{-1}(1-\eta)\rho_\Gamma(x).
\end{align*}
Therefore
\[
        \frac{\rho_\Gamma(x)}{\rho_\Gamma(a x)}
        \leq
        \frac{C_0}{1-\eta}.
\]

For the opposite inequality, split the sum defining \(\rho_\Gamma(a x)\) according
to whether \(u\gamma\in E_n\):
\begin{align*}
        \rho_\Gamma(a x)
        &=
        \sum_{\gamma:\,u\gamma\in E_n}q_n(a u\gamma)
        +
        \sum_{\gamma:\,u\gamma\notin E_n}q_n(a u\gamma)\\
        &\leq
        C_0
        \sum_{\gamma:\,u\gamma\in E_n}q_n(u\gamma)
        +
        B_{a,\Gamma}(x)\\
        &\leq
        C_0\rho_\Gamma(x)+\eta\rho_\Gamma(x)\\
        &=
        (C_0+\eta)\rho_\Gamma(x).
\end{align*}
Thus
\[
        \frac{\rho_\Gamma(x)}{\rho_\Gamma(a x)}
        \geq
        \frac{1}{C_0+\eta}.
\]
Combining the two estimates gives, for every \(x\in Y_{n,a,\Gamma}\),
\[
        \frac{1}{C_0+\eta}
        \leq
        \frac{\rho_\Gamma(x)}{\rho_\Gamma(a x)}
        \leq
        \frac{C_0}{1-\eta}.
\]
In particular, if \(\eta\leq 1/2\), then
\[
        C_1^{-1}
        \leq
        \frac{\rho_\Gamma(x)}{\rho_\Gamma(a x)}
        \leq
        C_1
\]
for some constant \(C_1=C_1(G,R_0)\), independent of \(n\) and \(\Gamma\).
\end{proof}

We now turn to the truncation-extension argument: 

\begin{claim}\label{cl: clipped-ratio-lipschitz}
There are constants \(0<c_1\leq c_2<\infty\), \(L_G<\infty\), and
\(\Lambda<\infty\), depending only on \(G\), such that for every \(n\geq1\)
there is a probability measure \(\widetilde\mu_n\) on \(G/\Gamma\) satisfying
\[
        \|\widetilde\mu_n-\mu^{*n}\|_{\mathrm{TV}}\leq \frac1n,
        \qquad
        \operatorname{Lip}(\log\widetilde\rho_n)\leq\Lambda,
\]
where \(\widetilde\rho_n=d\widetilde\mu_n/dm_{G/\Gamma}\). Moreover, for
every \(g\in G_1\), the function
\[
        \widetilde f_{g,n}(x)
        :=
        \frac{d\widetilde\mu_n}{d(g\widetilde\mu_n)}(x)
        =
        \frac{\widetilde\rho_n(x)}
             {\widetilde\rho_n(g^{-1}x)}
\]
takes values in \([c_1,c_2]\) and is \(L_G\)-Lipschitz.
\end{claim}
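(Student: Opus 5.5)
We will construct $\widetilde\mu_n$ by truncating the log of the heat-kernel density to a region where it is already controlled, extending by McShane, and then periodizing to $G/\Gamma$.

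\textbf{Step 1 (good region upstairs).} Set $E_n:=\{g\in G: r(g)\leq 3\|\rho\|n\}$. By Proposition~\ref{prop:heat-kernel-cartan-concentration}, $\mu^{*n}(E_n^c)\leq Ce^{-an}$. By Proposition~\ref{prop:two-sided-heat-kernel-log-lipschitz} with $A=3\|\rho\|$ (enlarging slightly to absorb $G_1$-translates), $\log p_n$ is uniformly Lipschitz on $E_n$ in the left variable. The constant is $\Lambda_0=C_G(1+3\|\rho\|+1)$, which depends only on $G$: smoothness of $p_n$ turns the bounded multiplicative jump over $G_1$ into a local Lipschitz bound.

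\textbf{Step 2 (truncate and extend).} Let $\psi:=\log p_n|_{E_n}$. McShane's theorem gives a $\Lambda_0$-Lipschitz extension $\widetilde\psi$ of $\psi$ to all of $G$. This extension need not be integrable, so we cap it: set
\[
\widetilde\psi':=\min\bigl(\widetilde\psi,\ \log p_n\vee(\text{a tail term})\bigr).
\]
The simplest choice we would try first is $\widetilde\psi'(g):=\max\bigl(\widetilde\psi(g),\ \min_{E_n}\psi-\Lambda_0 d(g,E_n)\bigr)$ capped above by $\widetilde\psi$. Since $E_n$ is $K$-invariant and $r(g)$ grows linearly off $E_n$, the volume growth $e^{2\rho(H)}$ of Haar measure is then beaten provided $\Lambda_0$ is made large (we may enlarge it freely). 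The total mass of $e^{\widetilde\psi'}$ off $E_n$ is then at most $e^{-a'n}$, using the Gaussian factor $e^{-\|H\|^2/4n}$ at the boundary of $E_n$. Normalize to get a probability density $\widetilde q_n$ on $G$ with $\|\widetilde q_n-p_n\|_{L^1}\leq 2Ce^{-a''n}\leq 1/n$ for $n$ large; small $n$ are absorbed into constants. By construction, $\log\widetilde q_n$ is globally $\Lambda_0$-Lipschitz under left translation.

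\textbf{Step 3 (periodize).} Define $\widetilde\rho_n(u\Gamma):=\sum_{\gamma}\widetilde q_n(u\gamma)$ and $\widetilde\mu_n=\widetilde\rho_n\,dm_{G/\Gamma}$. Since pushforward to $G/\Gamma$ contracts total variation, $\|\widetilde\mu_n-\mu^{*n}*\delta_{[e]}\|_{TV}\leq 1/n$. Global left-log-Lipschitzness passes termwise through the sum. For $g\in G_1$ and every $\gamma$, $e^{-\Lambda_0}\widetilde q_n(u\gamma)\leq\widetilde q_n(g^{-1}u\gamma)\leq e^{\Lambda_0}\widetilde q_n(u\gamma)$, so $\widetilde f_{g,n}\in[e^{-\Lambda_0},e^{\Lambda_0}]=:[c_1,c_2]$. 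The same termwise argument, applied to $h\in G_\epsilon$, gives $\operatorname{Lip}(\log\widetilde\rho_n)\leq\Lambda:=\Lambda_0$. This uses the fact that the metric on $G/\Gamma$ is the quotient of the right-invariant metric, so $d(hx,x)\leq d_G(h,e)$.

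\textbf{Step 4 (Lipschitz ratio).} We have $\log\widetilde f_{g,n}=\log\widetilde\rho_n(x)-\log\widetilde\rho_n(g^{-1}x)$. Since $x\mapsto g^{-1}x$ is Lipschitz with a constant depending only on $G_1$, this function is $2\Lambda\,\mathrm{Lip}(g^{-1})$-Lipschitz. Since $\widetilde f_{g,n}$ takes values in $[c_1,c_2]$, $\exp$ is $c_2$-Lipschitz there, so $\widetilde f_{g,n}$ is $L_G$-Lipschitz with $L_G:=c_2\cdot 2\Lambda\sup_{g\in G_1}\mathrm{Lip}(g^{-1})$.

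\textbf{Main obstacle.} The delicate point is Step 2: producing an extension that is simultaneously globally log-Lipschitz and has exponentially small mass outside $E_n$. Haar measure grows exponentially, so a Lipschitz decay with rate $\Lambda_0$ only works if $\Lambda_0>2\|\rho\|$. We would therefore take $\Lambda_0:=\max(\text{Prop.~\ref{prop:two-sided-heat-kernel-log-lipschitz} constant},\,4\|\rho\|)$. We would verify integrability via the Cartan integration formula, exactly as in the proof of Proposition~\ref{prop:heat-kernel-cartan-concentration}, using that $p_n$ at the boundary $r=3\|\rho\|n$ is already $e^{-cn}$-small after multiplying by the Jacobian.
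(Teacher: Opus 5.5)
\textbf{Verdict.} Your overall architecture is the paper's: restrict $\log p_n$ to a ball of radius $\asymp n$, take a Lipschitz extension with slope above the volume-growth exponent, renormalize, push forward, and use right-invariance of $d_G$ to carry the log-Lipschitz bound through periodization and then to the ratios $\widetilde f_{g,n}$. Steps~3 and~4 are fine. However, Step~1 has a genuine gap, and it is the one place where real analytic input is required.

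\textbf{The gap in Step 1.} Proposition~\ref{prop:two-sided-heat-kernel-log-lipschitz} is only a \emph{coarse}, scale-one estimate. It gives $|\log p_n(hg)-\log p_n(g)|\le C_G(1+A)$ for $h\in G_1$. The additive constant comes from the two-sided comparison $C_1^{\pm1}$ between $p_n$ and $F_n$ and from the polynomial factor, and it does not shrink as $h\to e$.
\begin{itemize}
\item A function with bounded oscillation on unit balls need not be Lipschitz.
\item ``Smoothness of $p_n$'' only yields Lipschitz constants that depend on $n$ and on the point.
\end{itemize}
So the claim that $\log p_n$ is $\Lambda_0$-Lipschitz on $E_n$ with $\Lambda_0$ independent of $n$ is unsupported.

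\textbf{Why the missing input cannot be bypassed.} What is needed is a uniform first-derivative bound $\sup_{n\ge1,\,R(z)\le 3An}\|\nabla\log p_n(z)\|\le\Lambda_A$, which the paper takes from the Anker--Ji derivative estimates. It is then integrated along geodesics joining points of $G_{An}$; these stay in $G_{3An}$, so your ``enlarging slightly'' must be by a factor of~$3$. This input is essential. The extension must agree with $p_n$ on $E_n$, which requires $\log p_n$ to be honestly Lipschitz there. Any construction built only from the coarse bound (a Lipschitz envelope of $\log p_n$, or a smoothing of $\log p_n$ or of $p_n$) changes $p_n$ by a bounded multiplicative factor on most of its mass. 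That destroys $\|\widetilde\mu_n-\mu^{*n}\|_{\mathrm{TV}}\le 1/n$.

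\textbf{Smaller issues, all fixable.}
\begin{itemize}
\item Your formula in Step~2 is vacuous. Taking $\max(\widetilde\psi,\cdot)$ and then capping above by $\widetilde\psi$ returns $\widetilde\psi$. If $\widetilde\psi$ is the usual McShane extension $\inf_y(\psi(y)+\Lambda_0 d(x,y))$, it grows linearly and is not integrable.
\item The fix is the paper's lower extension $u_n(x)=\sup_{y\in E_n}(\log p_n(y)-\Lambda d_G(x,y))$. It equals $\log p_n$ on $E_n$ and decays at rate $\Lambda$ off it.
\item In the tail estimate you must control the supremum over \emph{interior} $y$, not only boundary values. The paper does this via $\sup_{0\le s\le An}\bigl((C_0+\Lambda)s-s^2/(C_0n)\bigr)$. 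With your completed-square bound from Proposition~\ref{prop:heat-kernel-cartan-concentration}, it also works at radius $3\|\rho\|n$ once $\Lambda$ is large compared to $\|\rho\|$, up to the comparability constants between $d_G$ and the Cartan distance.
\item The bound $1/n$ carries no constant, so small $n$ cannot be ``absorbed into constants.'' The paper instead fixes the radius $An$ with $A$ large once and for all, so that the tail is $\le 1/(4n)$ for every $n\ge1$. The resulting dependence $\Lambda=\Lambda_A$ is harmless.
\end{itemize}
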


\begin{proof}
Let \(\pi:G\to G/\Gamma\) be the quotient map.  We first modify the
heat-kernel density on \(G\), and then push the resulting measure forward by
\(\pi\).  Write
\[
        q_n:=\frac{d\mu^{*n}}{dm_G}=p_n
\]
for the time-\(n\) heat-kernel density and put
\(R(z):=d_G(e,z)\), where \(d_G\) is the fixed right-invariant Riemannian
metric on \(G\).

We use two standard consequences of the sharp heat-kernel estimates on
noncompact symmetric spaces. First, the global upper bound of
\cite{AnkerOstellari} gives constants \(C_0,M>0\), depending only on \(G\),
such that
\begin{equation}
        q_n(z)
        \leq C_0 n^M
        \exp\left(C_0R(z)-\frac{R(z)^2}{C_0n}\right),
        \label{eq:clipped-gaussian-upper}
\end{equation}
for every \(n\geq1\) and \(z\in G\). Second, the heat-kernel asymptotics
and their first-derivative estimates in \cite{AnkerJi}, applied in the
region \(R(z)=O(n)\), imply that for every fixed \(A>0\) there is
\(\Lambda_A<\infty\) such that
\begin{equation}
        \sup_{\substack{n\geq1\\R(z)\leq3An}}
        \|\nabla\log q_n(z)\|
        \leq\Lambda_A.
        \label{eq:clipped-local-log-gradient}
\end{equation}
Here we used that \(R\) and the Cartan distance differ by at most a bounded
additive constant, since \(K\) is compact. We shall also use the standard
volume-growth estimate
\[
        m_G(G_t)\leq C_0e^{vt}
        \qquad (t\geq 0)
\]
for some \(v=v_G>0\).

Choose \(A=A(G)\) sufficiently large, put
\[
        B_n:=G_{An},
\]
and set \(\Lambda:=\Lambda_A\). Increasing \(\Lambda\), if necessary, we
may assume that
\[
        \Lambda>v+1
        \qquad\text{and}\qquad
        \Lambda+C_0\geq \frac{2A}{C_0}.
\]
If \(x,y\in B_n\) and \(z\) lies on a minimizing geodesic from \(x\) to
\(y\), then
\[
        R(z)
        \leq R(x)+d_G(x,z)
        \leq An+d_G(x,y)
        \leq 3An.
\]
Thus \eqref{eq:clipped-local-log-gradient}, integrated along that geodesic,
shows that \(\log q_n\) is \(\Lambda\)-Lipschitz on \(B_n\).

By the McShane extension theorem \cite{McShane1934}, define
\[
        u_n(x)
        :=
        \sup_{y\in B_n}
        \bigl(\log q_n(y)-\Lambda d_G(x,y)\bigr),
        \qquad x\in G,
\]
and put
\[
        \widehat q_n:=e^{u_n}.
\]
If \(x,x'\in G\), then
\[
        \log q_n(y)-\Lambda d_G(x,y)
        \leq
        \log q_n(y)-\Lambda d_G(x',y)
        +\Lambda d_G(x,x')
\]
for every \(y\in B_n\). Taking suprema and then interchanging \(x,x'\)
gives
\[
        |u_n(x)-u_n(x')|
        \leq\Lambda d_G(x,x').
\]
Moreover, if \(x\in B_n\), the choice \(y=x\) gives
\(u_n(x)\geq\log q_n(x)\), while the \(\Lambda\)-Lipschitzity of
\(\log q_n\) on \(B_n\) gives
\[
        \log q_n(y)-\Lambda d_G(x,y)
        \leq\log q_n(x)
        \qquad(y\in B_n).
\]
Hence \(u_n=\log q_n\) on \(B_n\). Consequently,
\begin{equation}
        \widehat q_n=q_n\quad\text{on }B_n,
        \qquad
        \operatorname{Lip}(\log\widehat q_n)\leq\Lambda.
        \label{eq:clipped-extension-properties}
\end{equation}

We next check that this extension changes only a small amount of mass. Fix
\(x\in G\setminus B_n\), so that \(R(x)>An\). For \(y\in B_n\), write
\(s=R(y)\). Since \(d_G(x,y)\geq R(x)-s\), Equation
\eqref{eq:clipped-gaussian-upper} gives
\begin{align*}
        u_n(x)
        &\leq
        \log C_0+M\log n-\Lambda R(x)\\
        &\qquad+
        \sup_{0\leq s\leq An}
        \left((C_0+\Lambda)s-\frac{s^2}{C_0n}\right)\\
        &\leq
        \log C_0+M\log n
        +C_0An-\frac{A^2n}{C_0}
        -\Lambda\bigl(R(x)-An\bigr).
\end{align*}
The last inequality follows from
\(C_0+\Lambda\geq 2A/C_0\).  Summing over unit annuli and using
\(\Lambda>v\), we obtain
\[
        \int_{B_n^c}\widehat q_n\,dm_G
        \leq
        C n^M
        \exp\left(
                -\left(\frac{A^2}{C_0}-(C_0+v)A\right)n
        \right).
\]
The same argument, directly from
\eqref{eq:clipped-gaussian-upper}, gives the same bound for
\(\int_{B_n^c}q_n\,dm_G\), after changing \(C\).  Thus, by choosing \(A\)
sufficiently large once and for all, we may arrange that
\begin{equation}
        \int_{B_n^c}q_n\,dm_G
        +
        \int_{B_n^c}\widehat q_n\,dm_G
        \leq \frac{1}{4n}
        \qquad\text{for every }n\geq 1.
        \label{eq:clipped-tail-small}
\end{equation}

Let
\[
        Z_n:=\int_G\widehat q_n\,dm_G,
        \qquad
        \widetilde q_n:=Z_n^{-1}\widehat q_n,
\]
and let \(\widetilde\mu_{n,G}\) be the probability measure on \(G\) with
density \(\widetilde q_n\).  Since \(\widehat q_n=q_n\) on \(B_n\),
\eqref{eq:clipped-tail-small} implies
\[
        |Z_n-1|
        \leq
        \int_{B_n^c}(q_n+\widehat q_n)\,dm_G
        \leq \frac{1}{4n}
\]
and
\begin{align*}
        \int_G|\widetilde q_n-q_n|\,dm_G
        &\leq
        |Z_n-1|
        +
        \int_G|\widehat q_n-q_n|\,dm_G\\
        &\leq
        2\int_{B_n^c}(q_n+\widehat q_n)\,dm_G
        \leq \frac{1}{2n}.
\end{align*}
Therefore
\[
        \|\widetilde\mu_{n,G}-\mu^{*n}\|_{TV}
        \leq \frac1n.
\]
Define
\[
        \widetilde\mu_n:=\pi_*\widetilde\mu_{n,G}.
\]
Total variation decreases under push-forward, so, using the usual convention
that \(\mu^{*n}\) also denotes its projection to \(G/\Gamma\),
\begin{equation}
        \|\widetilde\mu_n-\mu^{*n}\|_{TV}
        \leq \frac1n.
        \label{eq:clipped-tv}
\end{equation}

It remains to control the Radon--Nikodym ratios.  The density of
\(\widetilde\mu_n\) with respect to \(m_{G/\Gamma}\) is
\[
        \widetilde\rho_n(u\Gamma)
        =
        \sum_{\gamma\in\Gamma}\widetilde q_n(u\gamma).
\]
The tail estimate above, together with \(\Lambda>v\), implies that this series
converges locally uniformly.  Moreover, periodization preserves the
logarithmic Lipschitz bound.  Indeed, for \(x=u\Gamma\), \(y=v\Gamma\), and
\(\gamma_0\in\Gamma\), right invariance of \(d_G\) gives
\begin{align*}
        \widetilde\rho_n(x)
        &=
        \sum_{\gamma\in\Gamma}
        \widetilde q_n(u\gamma_0\gamma)\\
        &\leq
        e^{\Lambda d_G(u\gamma_0,v)}
        \sum_{\gamma\in\Gamma}\widetilde q_n(v\gamma)\\
        &=
        e^{\Lambda d_G(u\gamma_0,v)}\widetilde\rho_n(y).
\end{align*}
Taking the infimum over \(\gamma_0\), and then interchanging \(x\) and \(y\),
we obtain
\begin{equation}
        |\log\widetilde\rho_n(x)-\log\widetilde\rho_n(y)|
        \leq
        \Lambda d_{G/\Gamma}(x,y).
        \label{eq:clipped-periodized-loglip}
\end{equation}

Fix \(g\in G_1\).  Since \(m_{G/\Gamma}\) is \(G\)-invariant,
\[
        \widetilde f_{g,n}(x)
        =
        \frac{\widetilde\rho_n(x)}
             {\widetilde\rho_n(g^{-1}x)}.
\]
Set
\[
        D_G:=\sup_{g\in G_1}d_G(e,g^{-1})<\infty.
\]
For \(x=u\Gamma\), right invariance of \(d_G\) yields
\[
        d_{G/\Gamma}(x,g^{-1}x)
        \leq d_G(u,g^{-1}u)
        =d_G(e,g^{-1})
        \leq D_G.
\]
It follows from \eqref{eq:clipped-periodized-loglip} that
\begin{equation}
        e^{-\Lambda D_G}
        \leq
        \widetilde f_{g,n}(x)
        \leq
        e^{\Lambda D_G}.
        \label{eq:clipped-ratio-bounds}
\end{equation}
Thus we may take
\[
        c_1=e^{-\Lambda D_G},
        \qquad
        c_2=e^{\Lambda D_G}.
\]

Finally, let
\[
        A_G
        :=
        \sup_{g\in G_1}
        \operatorname{Lip}(x\mapsto g^{-1}x)<\infty.
\]
The corresponding Lipschitz constants of left translations on \(G\) are
uniformly bounded for \(g\in G_1\), so \(A_G\) depends only on \(G\).  By
\eqref{eq:clipped-periodized-loglip},
\begin{align*}
        &\left|
        \log\widetilde f_{g,n}(x)
        -
        \log\widetilde f_{g,n}(y)
        \right|\\
        &\qquad\leq
        \Lambda d_{G/\Gamma}(x,y)
        +
        \Lambda d_{G/\Gamma}(g^{-1}x,g^{-1}y)\\
        &\qquad\leq
        \Lambda(1+A_G)d_{G/\Gamma}(x,y).
\end{align*}
Since the ratio takes values in \([c_1,c_2]\), exponentiation gives
\[
        |\widetilde f_{g,n}(x)
        -\widetilde f_{g,n}(y)|
        \leq
        c_2\Lambda(1+A_G)d_{G/\Gamma}(x,y).
\]
Hence the claim holds with
\[
        L_G:=c_2\Lambda(1+A_G).
\]
In fact, the construction proves the stronger estimate
\(\operatorname{Lip}(\log\widetilde\rho_n)\leq\Lambda\), uniformly in \(n\).
\end{proof}

\begin{claim}\label{cl: analytics}
There are constants \(0<c_1\leq c_2<\infty\) and \(L_G<\infty\), depending
only on \(G\), such that for every \(n\geq1\) there is a probability measure
\(\widetilde\nu_n\) on \(G/\Gamma\) with the following properties:
\begin{enumerate}
        \item
        \[
                \|\widetilde\nu_n-\nu_n\|_{\mathrm{TV}}
                \leq \frac{2}{\sqrt n}.
        \]
        \item If
        \(\widetilde\eta_n=d\widetilde\nu_n/dm_{G/\Gamma}\), then, for every
        \(g\in G_1\),
        \[
                c_1
                \leq
                \widetilde f_{g,n}(x)
                :=
                \frac{d\widetilde\nu_n}
                     {d(g\widetilde\nu_n)}(x)
                =
                \frac{\widetilde\eta_n(x)}
                     {\widetilde\eta_n(g^{-1}x)}
                \leq
                c_2
                \qquad(x\in G/\Gamma).
        \]
        \item For every \(g\in G_1\), the function
        \(\widetilde f_{g,n}\) is \(L_G\)-Lipschitz.
\end{enumerate}
\end{claim}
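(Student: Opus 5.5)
The plan is to apply Claim~\ref{cl: clipped-ratio-lipschitz} to each convolution power $\mu^{*i}$ and average, with a truncation step for the small $i$. For each $1\le i\le n$, Claim~\ref{cl: clipped-ratio-lipschitz} gives a probability measure $\widetilde\mu_i$ on $G/\Gamma$ with $\|\widetilde\mu_i-\mu^{*i}*\delta_{[e]}\|_{TV}\le 1/i$. Its density $\widetilde\rho_i$ satisfies $\operatorname{Lip}(\log\widetilde\rho_i)\le\Lambda$, with $\Lambda$ independent of $i$.

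The naive average $\frac1n\sum_i\widetilde\mu_i$ has total variation error $\frac1n\sum_i 1/i\approx \log n/n$, which is already below $2/\sqrt n$. Even so, I would discard the first $\lfloor\sqrt n\rfloor$ terms. This makes the estimate robust and costs only $\sqrt n/n=1/\sqrt n$ in total variation. So I would set
\[
\widetilde\nu_n:=\frac1n\Big(\lfloor\sqrt n\rfloor\,\widetilde\mu_{n}+\sum_{i>\sqrt n}\widetilde\mu_i\Big),
\]
where any fixed $\widetilde\mu_j$ could be used as the filler in place of $\widetilde\mu_n$. Then
\[
\|\widetilde\nu_n-\nu_n\|_{TV}\le \frac{\sqrt n}{n}+\frac1n\sum_{i>\sqrt n}\frac1i\le \frac{1}{\sqrt n}+\frac{\log n}{n}\le\frac{2}{\sqrt n}.
\]
This proves part (a).

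For parts (b) and (c), the key observation is that log-Lipschitz bounds are stable under convex combinations. If $\eta=\sum_i a_i\rho_i$ with $a_i\ge0$, and each $\rho_i$ satisfies $\rho_i(x)\le e^{\Lambda d(x,y)}\rho_i(y)$, then summing gives the same bound for $\eta$. Hence $\operatorname{Lip}(\log\widetilde\eta_n)\le\Lambda$. From this point the argument is the one at the end of Claim~\ref{cl: clipped-ratio-lipschitz}, applied verbatim to $\widetilde\eta_n$. The bound $d_{G/\Gamma}(x,g^{-1}x)\le D_G$ gives $\widetilde f_{g,n}\in[e^{-\Lambda D_G},e^{\Lambda D_G}]=[c_1,c_2]$. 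For the Lipschitz property, $\log\widetilde f_{g,n}$ is $\Lambda(1+A_G)$-Lipschitz, and exponentiating on $[c_1,c_2]$ gives $L_G=c_2\Lambda(1+A_G)$. All of these constants depend only on $G$.

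The only point needing care is that Claim~\ref{cl: clipped-ratio-lipschitz} provides $\Lambda$ uniformly in $i\ge1$, which its statement asserts. I would also check that $\widetilde\eta_n$ is genuinely a density with respect to $m_{G/\Gamma}$ when $\Gamma$ has infinite covolume. This holds because periodization of $\widetilde q_i$ converges and has total mass $1$, by unfolding. I expect no real obstacle beyond checking that uniformity; if uniformity in $i$ were in doubt for small $i$, the truncation above removes those terms anyway.
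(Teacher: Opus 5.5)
Your proposal is correct and is essentially the paper's proof. Like the paper, you average the clipped measures $\widetilde\mu_i$ from Claim~\ref{cl: clipped-ratio-lipschitz}, observe that the pointwise bound $\widetilde\rho_i(x)\le e^{\Lambda d_{G/\Gamma}(x,y)}\widetilde\rho_i(y)$ survives convex combinations, and then rerun the ratio and Lipschitz estimates with $c_1=e^{-\Lambda D_G}$, $c_2=e^{\Lambda D_G}$ and $L_G=c_2\Lambda(1+A_G)$.

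The one difference is replacing the first $\lfloor\sqrt n\rfloor$ terms by copies of $\widetilde\mu_n$, which is harmless but unnecessary. Since $\Lambda$ is uniform in $i$, the paper takes $\widetilde\nu_n=\frac1n\sum_{i=1}^n\widetilde\mu_i$ directly and uses $\frac1n\sum_{i=1}^n\frac1i\le\frac{1+\log n}{n}\le\frac{2}{\sqrt n}$.
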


\begin{proof}
For each \(i\geq1\), let \(\widetilde\mu_i\) and \(\widetilde\rho_i\) be
given by Claim \ref{cl: clipped-ratio-lipschitz}, and define
\[
        \widetilde\nu_n
        :=
        \frac1n\sum_{i=1}^n\widetilde\mu_i,
        \qquad
        \widetilde\eta_n
        :=
        \frac1n\sum_{i=1}^n\widetilde\rho_i.
\]
By triangle inequality,
\[
\begin{aligned}
        \|\widetilde\nu_n-\nu_n\|_{\mathrm{TV}}
        &\leq
        \frac1n\sum_{i=1}^n
        \|\widetilde\mu_i-\mu^{*i}\|_{\mathrm{TV}}\\
        &\leq
        \frac1n\sum_{i=1}^n\frac1i
        \leq
        \frac{1+\log n}{n}
        \leq
        \frac{2}{\sqrt n}.
\end{aligned}
\]

By the same claim,
\(\operatorname{Lip}(\log\widetilde\rho_i)\leq\Lambda\), with \(\Lambda\)
independent of \(i\). Hence, for \(x,y\in G/\Gamma\),
\[
        e^{-\Lambda d_{G/\Gamma}(x,y)}\widetilde\rho_i(y)
        \leq
        \widetilde\rho_i(x)
        \leq
        e^{\Lambda d_{G/\Gamma}(x,y)}\widetilde\rho_i(y).
\]
Summing over \(i\) shows that
\begin{equation}
        \operatorname{Lip}(\log\widetilde\eta_n)\leq\Lambda.
        \label{eq:average-log-density-lipschitz}
\end{equation}

Put
\[
        D_G:=\sup_{g\in G_1}d_G(e,g^{-1})<\infty,
        \qquad
        A_G:=
        \sup_{g\in G_1}
        \operatorname{Lip}(x\mapsto g^{-1}x)<\infty.
\]
Equation \eqref{eq:average-log-density-lipschitz} gives
\[
        e^{-\Lambda D_G}
        \leq
        \widetilde f_{g,n}(x)
        \leq
        e^{\Lambda D_G}.
\]
Thus we may take \(c_1=e^{-\Lambda D_G}\) and
\(c_2=e^{\Lambda D_G}\). Furthermore,
\[
\begin{aligned}
        |\log\widetilde f_{g,n}(x)
          -\log\widetilde f_{g,n}(y)|
        &\leq
        \Lambda d_{G/\Gamma}(x,y)
        +\Lambda d_{G/\Gamma}(g^{-1}x,g^{-1}y)\\
        &\leq
        \Lambda(1+A_G)d_{G/\Gamma}(x,y).
\end{aligned}
\]
Since \(\widetilde f_{g,n}\leq c_2\), it follows that
\[
        |\widetilde f_{g,n}(x)-\widetilde f_{g,n}(y)|
        \leq
        c_2\Lambda(1+A_G)d_{G/\Gamma}(x,y).
\]
This proves the claim with \(L_G=c_2\Lambda(1+A_G)\).
\end{proof}

\section{Margulis type inequality for convolution powers of measures with exponential moment}

\subsection{Standing notation}

Let \(G\) be a connected simple Lie group with finite center, and let \(K<G\) be a maximal compact subgroup.  Put
\(X=G/K\) and let \(o=K\in X\).  We write
\[
  \Gamma^g = g\Gamma g^{-1}
\]
for conjugation.  We say that a subset of \(G\) is Zariski dense if its image
under the adjoint representation is Zariski dense in the real algebraic group
\(\Ad(G)\).

We denote by $\eta_K$
the normalized Haar probability measure on the compact group \(K\).  If \(g\in G\), then
\(\delta_g\) denotes the Dirac probability mass at \(g\).  If \(\lambda\) and
\(\nu\) are Borel probability measures on \(G\), their convolution
\(\lambda*\nu\) is the push-forward of \(\lambda\otimes\nu\) under multiplication
\((x,y)\mapsto xy\).  Thus
  $\eta_K*\delta_a*\eta_K$

is the law of \(k_1ak_2\), where $(k_1,k_2)$ are independent \(\eta_K\)-distributed
random variables.  A probability measure \(\tau\) on \(G\) is called
bi-\(K\)-invariant if
\[
  (k_1)_*\tau=\tau=(\cdot k_2)_*\tau
  \qquad\text{for all }k_1,k_2\in K,
\]
or equivalently \(\tau(k_1Ak_2)=\tau(A)\) for every Borel set \(A\subset G\).

Fix a Cartan decomposition
\[
  \g=\mathfrak{k}\oplus\p,
\]
choose a maximal abelian subspace \(\aCartan\subset\p\), and choose a closed
positive Weyl chamber \(\overline{\aCartan^+}\subset\aCartan\).  The Cartan
decomposition of \(G\) says that every \(g\in G\) can be written as
\[
  g=k_1\exp(\kappa(g))k_2,
  \qquad k_1,k_2\in K,
\]
where \(\kappa(g)\in\overline{\aCartan^+}\) is uniquely determined.  We call
\(\kappa(g)\) the Cartan projection of \(g\).  The Riemannian metric on
\(X=G/K\) is normalized once and for all; with respect to any Euclidean norm on
\(\aCartan\), there are constants \(A_1,A_2>0\) such that
\begin{equation}\label{eq:distance-kappa-comparable}
  A_1\|\kappa(g)\|-A_2
  \le \dX(o,go)
  \le A_2\|\kappa(g)\|+A_2 .
\end{equation}
In the usual normalization one can take \(\dX(o,go)=\|\kappa(g)\|\), but only
comparability will be used.

Let \(\Sigma^+\) be the corresponding set of positive restricted roots, and let
\(\Pi\subset\Sigma^+\) be the set of simple restricted roots.  Put
\[
  \uphor=\bigoplus_{\alpha\in\Sigma^+}\g_\alpha .
\]
We also fix the \(\Ad(\exp\aCartan)\)-invariant complementary subspace
\[
  \pminus
  =\g_0\oplus\bigoplus_{\alpha\in\Sigma^+}\g_{-\alpha},
  \qquad \g=\uphor\oplus\pminus,
\]
where \(\g_0\) is the zero restricted-root space.  Let
\(P:\g\to\uphor\) denote the projection with kernel \(\pminus\).
For \(H\in\overline{\aCartan^+}\), set
\[
  a_H=\exp H,
  \qquad
  r(H)=\min_{\alpha\in\Pi}\alpha(H).
\]
Thus \(r(H)\ge0\), and \(r(H)>0\) precisely when \(H\) lies in the open Weyl
chamber.

Let \(\|\cdot\|\) be the usual Cartan norm on \(\g=\Lie(G)\).

Let \(\mathcal N\) be the set of all nonzero nilpotent Lie subalgebras of \(\g\), viewed as a subset of the finite disjoint union
\[
  \coprod_{d=1}^{\dim\g}\Grass_d(\g).
\]
For each \(d\), the conditions ``Lie subalgebra'' and ``nilpotent'' are closed
conditions on \(\Grass_d(\g)\); hence \(\mathcal N\) is compact.  It is also
\(\Ad(K)\)-invariant.  For \(\mathfrak n\in\mathcal N\), define
\begin{equation}\label{eq:def-mn}
  m_{\mathfrak n}(g)
  =
  \inf_{0\ne Y\in\mathfrak n}
  \frac{\|\Ad(g)Y\|}{\|Y\|}.
\end{equation}
This is the smallest singular value of \(\Ad(g)\) restricted to
\(\mathfrak n\).

Finally, let \(R>0\) be a Zassenhaus radius for \(G\), chosen small enough that
\(\exp\) is injective on \(B_\g(R)\).  Choose \(0<\rho<R\), and set
\[
  V=\exp B_\g(R),
  \qquad
  V_0=\exp B_\g(\rho).
\]
For a discrete subgroup \(\Gamma<G\), define
\begin{equation}\label{eq:def-IG}
\Ii_G(\Gamma)=
\begin{cases}
\displaystyle \min_{\gamma\in(\Gamma\cap V_0)\setminus\{e\}}
     \|\log \gamma\|,
  & \Gamma\cap V_0\ne\{e\},\\[1.2ex]
\rho,
  & \Gamma\cap V_0=\{e\}.
\end{cases}
\end{equation}
Equivalently,
\[
  \Ii_G(\Gamma)=\sup\{0<r<\rho:\Gamma\cap\exp B_\g(r)=\{e\}\}.
\]
This is the Archimedean discreteness-radius function used by
Gelander-Levit-Margulis; see especially Section 7 and the proof of Theorem
1.5 of \cite{GLM}. It is Lipschitz equivalent (with constant depending only on $G$) to the discreteness radius defined in Definition \ref{def: disc rad}.  

\subsection{The Gelander--Levit--Margulis inputs}

We shall use the following three consequences of Gelander--Levit--Margulis
\cite{GLM}.  They are stated here in the precise form needed below.

\begin{proposition}[GLM inputs]\label{prop:GLM-inputs}
With the notation above, the following hold.
\begin{enumerate}[label=\textup{(GLM\arabic*)}, leftmargin=2.8em]

\item\label{item:GLM-zassenhaus}
\emph{Zassenhaus nilpotent enclosure.}  The radius \(R\) can be chosen so that,
for every discrete subgroup \(\Gamma<G\), the group generated by  $\Gamma \cap V$ is contained in a connected nilpotent subgroup $N(\Gamma).$
Denote it's Lie algebra by
\(\mathfrak n(\Gamma)\in\mathcal N\).

\item\label{item:GLM-angular}
\emph{Uniform angular sublevel estimate.}  For the projection
\(P:\g\to\uphor\) fixed above, there exist constants
\(C_{\mathrm{ang}}>0\) and \(\theta>0\), depending only on \(G,K,\|\cdot\|\) and
\(P\), such that for every \(\mathfrak n\in\mathcal N\) and every
\(0<\varepsilon<1\),
\begin{equation}\label{eq:GLM-angular-sublevel}
  \eta_K\left\{k\in K:
  \inf_{0\ne Y\in\Ad(k)\mathfrak n}
  \frac{\|PY\|}{\|Y\|}\le\varepsilon\right\}
  \le C_{\mathrm{ang}}\varepsilon^\theta .
\end{equation}

\item\label{item:GLM-linear}
\emph{Linear expansion from angular transversality.}  There is a constant
\(C_{\mathrm{lin}}\ge1\) such that for every
\(H\in\overline{\aCartan^+}\), every \(k\in K\), and every
\(\mathfrak n\in\mathcal N\),
\begin{equation}\label{eq:GLM-linear-expansion}
  m_{\mathfrak n}(a_Hk)
  \ge
  C_{\mathrm{lin}}^{-1} e^{r(H)}
  \inf_{0\ne Y\in\Ad(k)\mathfrak n}
  \frac{\|PY\|}{\|Y\|}.
\end{equation}

\end{enumerate}
\end{proposition}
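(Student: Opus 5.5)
We prove the three items separately. (GLM3) is a direct computation, (GLM1) is the classical Zassenhaus lemma, and (GLM2) is where the real work lies.

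\emph{(GLM3).} Fix $H\in\overline{\aCartan^+}$, $k\in K$, $\mathfrak n\in\mathcal N$ and $0\neq Y\in\mathfrak n$, and put $Z=\Ad(k)Y$. Since the Cartan norm is $\Ad(K)$-invariant, $\|Z\|=\|Y\|$. Both $\uphor$ and $\pminus$ are $\Ad(a_H)$-invariant, so $P$ commutes with $\Ad(a_H)$. On $\g_\alpha$ with $\alpha\in\Sigma^+$, $\Ad(a_H)$ acts by $e^{\alpha(H)}$. Every positive root is a nonnegative integer combination of simple roots with at least one positive coefficient, so $\alpha(H)\geq r(H)$. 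The root-space decomposition is orthogonal for the Cartan norm, so $\|\Ad(a_H)PZ\|\geq e^{r(H)}\|PZ\|$. Hence
\[
\|\Ad(a_Hk)Y\|\geq \|P\|_{op}^{-1}\|P\Ad(a_H)Z\|=\|P\|_{op}^{-1}\|\Ad(a_H)PZ\|\geq \|P\|_{op}^{-1}e^{r(H)}\|PZ\|.
\]
Dividing by $\|Y\|=\|Z\|$ and taking the infimum over $Y$ gives \eqref{eq:GLM-linear-expansion} with $C_{\mathrm{lin}}=\max(1,\|P\|_{op})$.

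\emph{(GLM1).} This is the Zassenhaus--Kazhdan--Margulis lemma. Choose $R$ small enough that $\exp$ is a diffeomorphism on $B_\g(R)$ and the commutator map contracts, $\|\log[x,y]\|\leq C\|\log x\|\,\|\log y\|$ on $V$. The standard argument then shows that for a discrete $\Gamma$, the set $\log(\Gamma\cap V)$ generates a nilpotent Lie subalgebra $\mathfrak n(\Gamma)$: iterated commutators of elements of $\Gamma\cap V$ shrink to $e$, so by discreteness they are eventually trivial, and the Baker--Campbell--Hausdorff series then terminates. Take $N(\Gamma)=\exp\mathfrak n(\Gamma)$, which contains $\langle\Gamma\cap V\rangle$. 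We will quote \cite{GLM} for the precise form.

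\emph{(GLM2)} is the main obstacle. For $\mathfrak n$ of dimension $d$, consider $F_{\mathfrak n}(k):=\|\wedge^d P(\wedge^d\Ad(k)w_{\mathfrak n})\|$, where $w_{\mathfrak n}$ is a unit Plücker vector for $\mathfrak n$. Up to constants depending only on $\g$, the quantity $\inf_{Y}\|PY\|/\|Y\|$ is at least $F_{\mathfrak n}(k)$, since the smallest singular value is bounded below by the product of all singular values. As $\mathfrak n$ ranges over the compact set $\mathcal N$, the functions $F_{\mathfrak n}^2$ lie in a fixed finite-dimensional space of matrix coefficients of the $K$-representation $\wedge^d\g\otimes\wedge^d\g$; these are real-analytic functions on $K$. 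The plan is to use uniform $(C,\alpha)$-goodness of such a finite-dimensional space on the compact analytic manifold $K$, in the sense of Kleinbock--Margulis (a Remez/Łojasiewicz-type estimate). This gives $\eta_K\{F_{\mathfrak n}\leq\varepsilon\}\leq C\varepsilon^\theta\sup_K F_{\mathfrak n}^{-\theta}$, so \eqref{eq:GLM-angular-sublevel} follows once we know $\inf_{\mathfrak n\in\mathcal N}\sup_K F_{\mathfrak n}>0$. By compactness and continuity this reduces to showing that, for each nilpotent $\mathfrak n$, some $k\in K$ satisfies $\Ad(k)\mathfrak n\cap\pminus=0$.

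For this nondegeneracy we would try the following. The $d$-planes meeting $\pminus$ nontrivially form a proper closed algebraic (Schubert-type) subvariety of $\Grass_d(\g)$. We must show that the $K$-orbit of $\mathfrak n$ is not contained in it. Using $G=KP_\varnothing$ and that $P_\varnothing$ preserves $\uphor$, the nilpotent-element part of $\mathfrak n$ can be moved by $K$ into $\uphor$. Elliptic directions (e.g. those in $\mathfrak m\subset\pminus$) are moved off $\pminus$ by generic $k$, because the $\uphor$-component of $\Ad(k)X$ for $X\in\mathfrak k$ vanishes only when $\Ad(k)X\in\mathfrak m$. Making this simultaneous for all of $\mathfrak n$ is the delicate point. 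If a clean argument fails, we will fall back on citing \cite[Section~7]{GLM}, where exactly this uniform transversality is established.
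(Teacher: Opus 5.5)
Your proposal is correct and follows the paper's proof essentially step for step. (GLM1) is quoted from \cite{GLM}. (GLM3) is the same computation $\|\Ad(a_H)Z\|\ge\|P\|^{-1}\|\Ad(a_H)PZ\|\ge\|P\|^{-1}e^{r(H)}\|PZ\|$. (GLM2) is the same reduction via the Pl\"ucker map to a compact family with finite-dimensional span, with the smallest singular value bounded below by the product of all singular values. The only differences are in what you cite. For the uniform sublevel bound you use Kleinbock--Margulis $(C,\alpha)$-goodness plus compactness, where the paper uses Proposition~6.1 and Theorem~5.1 of \cite{GLM}. The nondegeneracy step you leave to a citation (some $k\in K$ with $\Ad(k)\mathfrak n\cap\pminus=\{0\}$) is exactly what the paper takes from Propositions~6.4--6.5 of \cite{GLM}, so your fallback is what the paper does, though the paper cites those propositions rather than Section~7.
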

\begin{proof}
The first estimate (GLM1) is a classical lemma of Zassenhaus. See \cite[Lemma 7.1]{GLM} for this exact formulation and further references.

For the second estimate (GLM2), let $\mathcal{N}_d$ be the space of nilpotent $d$-dimensional Lie subalgebras of $\mathfrak{g}$. Note, this is a compact subset of $Gr_{d}(\mathfrak{g})$. Reverse the positive-root convention used in \cite{GLM} so that their decomposition $\mathfrak{g}=\mathfrak{b}+\mathfrak{u}^{-1}$ becomes $\mathfrak{g}=\mathfrak{p}^{-1}+\mathfrak{u}$ and their projection onto $\mathfrak{u}^{-1}$ is our projection $P$. By Propositions 6.4 and 6.5 of \cite{GLM}, for every $\mathfrak{n}\in \mathcal{N}_d$ there exists $k\in K$ such that $Ad(k)\mathfrak{n}\cap \mathfrak{p}^{-}=\{0\}$. Thus, the Pluecker map $Q_{\mathfrak{n}}=\wedge^{d}(P\circ Ad(k))(X_1\wedge...\wedge X_d)$ (where $X_1,...,X_d$ is an orthonormal basis of $\mathfrak{n}$) is not identically zero. The resulting family of maps, as $\mathfrak{n}$ ranges over $\mathcal{N}_d$, is compact and has finite-dimensional linear span.

Furthermore, Proposition 6.1 and Theorem 5.1 of \cite{GLM} give constants $C_d,\theta_d, \epsilon_d>0$ such that $$\eta_K\{k:\|Q_{\mathfrak{n}}(k)\|\leq \varepsilon\}\leq C_{d}\varepsilon^{\theta_d}$$ for every $\mathfrak{n}\in \mathcal{N}_d$ and $0<\varepsilon<\varepsilon_d$.

Let $s_{1}(k)\geq...\geq s_{d}(k)$ be the singular values of $P_{Ad(k)\mathfrak{n}}$. 
Then $\|Q_{\mathfrak{n}}(k)\|=\Pi^{d}_{i=1}s_{i}(k)$ and $s_{d}(k)=\inf_{Y\in Ad(k)\mathfrak{n}} \frac{\|PY\|}{\|Y\|}.$ 
Since $P$ is orthogonal, we have all $s_{i}(k)\leq 1$ so $\|Q_{\mathfrak{n}}(k)\|\leq \inf_{Y\in Ad(k)\mathfrak{n}} \frac{\|PY\|}{\|Y\|}.$
Hence, the set $\left\{k\in K:
  \inf_{0\ne Y\in\Ad(k)\mathfrak n}
  \frac{\|PY\|}{\|Y\|}\le\varepsilon\right\}$ is contained in the set 
$\{k:\|Q_{\mathfrak{n}}(k)\|\leq \epsilon\}$.
Taking the minimum of the finitely many exponents $\theta_d$ and the maximum of the corresponding constants, gives the estimate uniformly over all dimensions and completes the proof.

For the third estimate (GLM3), note that for any $Z\in \mathfrak{g}$ we have:
$$\|Ad(a_H)Z\|\geq \|P\|^{-1}\|P Ad(a_H)Z\|=\|P\|^{-1}\|Ad(a_H)PZ\| \geq C^{-1} e^{r(H)}\|PZ\|.$$
Taking $Z=\Ad(k)Y$, using the $Ad(K)$ invariance of the norm and then taking the infimum over $Y\in \mathfrak{n}\{0\}$ gives the desired estimate.

\end{proof}

\subsection{Statement}

\begin{theorem}[Convolution-power GLM inequality]\label{thm:main}
Let \(\mu\) be a bi-\(K\)-invariant Borel probability measure on \(G\).  Assume
that the closed semigroup generated by \(\supp\mu\) is Zariski dense in \(G\),
and that \(\mu\) has exponential moment in the symmetric space, i.e. that for
some \(\alpha>0\),
\begin{equation}\label{eq:exp-moment}
  \int_G e^{\alpha\dX(o,go)}\,d\mu(g)<\infty.
\end{equation}
Then there exist
\[
  N\ge 1, \qquad \delta>0, \qquad 0<c<1, \qquad b<\infty
\]
such that for every discrete subgroup \(\Gamma<G\),
\begin{equation}\label{eq:main-drift}
  \int_G \Ii_G(\Gamma^g)^{-\delta}\,d\mu^{*N}(g)
  \le c I_G(\Gamma)^{-\delta}+b.
\end{equation}
The constants \(N,\delta,c,b\) may depend on \(\mu\).  In particular, no claim is
made that \(\delta\) is the explicit group-uniform exponent appearing in
\cite[Theorem 1.5]{GLM}.
\end{theorem}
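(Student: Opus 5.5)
We follow the Margulis-function strategy of Eskin--Margulis and Gelander--Levit--Margulis. The test function is $F(\Gamma)=\Ii_G(\Gamma)^{-\delta}$, and we want a uniform contraction of $F$ under $\mu^{*N}$ up to an additive constant. Since $\Ii_G\leq\rho$, we have $F\geq\rho^{-\delta}$. If $\Ii_G(\Gamma)$ is bounded below by a fixed constant $r_1$, the inequality follows from a crude estimate. The substantive case is therefore $\Ii_G(\Gamma)$ small, where we must show that a $\mu^{*N}$-random conjugation expands the short elements of $\Gamma$ on average.

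\textbf{Reduction to a single step measure.} Write $\mu^{*N}$, which is bi-$K$-invariant, as $\int \eta_K*\delta_{a_H}*\eta_K\,d\tau_N(H)$, where $\tau_N$ is the law of the Cartan projection. It then suffices to bound $\int_K F(\Gamma^{a_Hk})\,d\eta_K(k)$ for a fixed $H$ and a fixed $\Gamma$; the left $K$-factor does not change $\Ii_G$, because the norm is $\Ad(K)$-invariant. Set $\Gamma'=\Gamma^{k'}$ for the outer $K$-factor and fix it. By (GLM1), $\Gamma'\cap V$ generates a subgroup of a connected nilpotent group $N(\Gamma')$ with Lie algebra $\mathfrak n=\mathfrak n(\Gamma')\in\mathcal N$. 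Take $\gamma\in\Gamma'\cap V$ with $\|\log\gamma\|=\Ii_G(\Gamma')=:s$ small; then $\log\gamma\in\mathfrak n$.

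\textbf{Key estimate.} For any nontrivial $\gamma''\in(\Gamma')^{a_Hk}$ with $\|\log\gamma''\|<\rho$, we must show that $\|\log\gamma''\|\gtrsim m_{\mathfrak n}(a_Hk)\,s$, i.e.\ that short elements of the conjugate come from elements of $\Gamma'\cap V$. Two cases arise. If the preimage lies in $V$, then its logarithm lies in $\mathfrak n$ and has norm at least $s$, so by (GLM3) and (GLM2) its norm is multiplied by at least $C_{\mathrm{lin}}^{-1}e^{r(H)}\epsilon_k$, where $\epsilon_k$ is the angular quantity. If the preimage lies outside $V$, then its norm is at least $R$, and conjugation by $a_H$ contracts norms by at most $e^{-C\|H\|}$. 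This gives
\[
\Ii_G(\Gamma^{a_Hk})\geq \min\bigl(\rho,\;C^{-1}e^{r(H)}\epsilon_k s,\;R e^{-C\|H\|}\bigr).
\]
Raising to the power $-\delta$ and integrating over $k$ with (GLM2), we get $\int \epsilon_k^{-\delta}d\eta_K\leq C'$ as soon as $\delta<\theta$, by the layer-cake formula. Hence
\[
\int_K F(\Gamma^{a_Hk})\,dk\leq C''e^{-\delta r(H)}F(\Gamma)+R^{-\delta}e^{C\delta\|H\|}+\rho^{-\delta}.
\]

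\textbf{Integration over $H$.} We now integrate against $\tau_N$. By Zariski density of the semigroup and Proposition~\ref{prop:heat-kernel-cartan-concentration} (for the heat kernel; in general we would use the Benoist--Quint law of large numbers, which says the Cartan drift lies in the open chamber), $r(H)\geq cN$ with probability $1-Ce^{-aN}$. On the exceptional set we use $r(H)\geq0$, so the first term contributes at most $C''(e^{-\delta cN}+Ce^{-aN})F(\Gamma)$. We choose $N$ large so that this coefficient is less than $1$. The second term is finite, provided $\delta$ is small enough relative to the exponential moment $\alpha$ of $\mu^{*N}$. Since $\delta$ must be chosen after $N$, we note that the exponential moment of $\mu^{*N}$ holds with the same $\alpha$ and a constant of size $M^N$, which is finite for any fixed $N$. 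The coefficient bound $C''$ is uniform in $\delta\leq\theta/2$, so we may fix $N$ first and then shrink $\delta$.

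\textbf{Main obstacle.} The hardest step is the key estimate: showing that the shortest element of the conjugate group comes from the nilpotent enclosure, together with the uniformity of (GLM2) over the outer $K$-twist. The careful Zassenhaus bookkeeping is needed to ensure that elements of $\Gamma'$ outside $V$ cannot become short, except through the crude $e^{-C\|H\|}$ term that the exponential moment absorbs.
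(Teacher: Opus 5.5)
Your route is the paper's. You use the bi-$K$-invariant disintegration of $\mu^{*N}$ and the pointwise bound $\Ii_G(\Gamma^g)\geq\min\{\rho,\,m_{\mathfrak n(\Gamma)}(g)\Ii_G(\Gamma),\,R/L(g)\}$ of Lemma~\ref{lem:thin}, with $L(g)\le e^{C\|\kappa(g)\|}$. You integrate (GLM2)--(GLM3) against the Haar-distributed right $K$-factor via layer-cake, use the Benoist--Quint drift to get $\E[e^{-\delta r(\kappa(Y_N))}]\to0$, and use the exponential moment to integrate the $L(g)^\delta$ term.

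Two small remarks on the key estimate and the drift. First, phrase the case split as the paper does. Write $g\gamma g^{-1}=\exp X$; either $\|\Ad(g^{-1})X\|<R$, so that $\log\gamma=\Ad(g^{-1})X\in\mathfrak n$, or $\|X\|\geq R/L(g)$. The ``norm of a preimage outside $V$'' is not defined. Second, for a general $\mu$ the law of large numbers only gives that the exceptional probability tends to $0$, not $Ce^{-aN}$. That is all you need.

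The one step that fails as written is the order in which you fix $\delta$ and $N$. Your contraction coefficient is $C''\,\E[e^{-\delta r(H)}]\le C''(e^{-\delta cN}+P_N)$, and it depends on $\delta$ through $e^{-\delta cN}$. The gain comes from the product $\delta N$. With $N$ fixed and $\delta\to0$, the bound tends to at least $1$, so contraction is lost. Uniformity of $C''$ in $\delta\le\theta/2$ does not rescue this. So ``fix $N$ first and then shrink $\delta$'' does not work.

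The cure is the observation you already made. Since $d_X(o,gho)\le d_X(o,go)+d_X(o,ho)$, one has $\int e^{\alpha d_X(o,go)}\,d\mu^{*N}(g)\le M^N$ with the same $\alpha$. Hence the smallness condition needed for the $R^{-\delta}L(g)^\delta$ term, namely $C\delta\le\alpha$, does not depend on $N$. So first fix $\delta<\min\{\theta,\alpha/C\}$. Only then choose $N$ so that $C''\,\E[e^{-\delta r(\kappa(Y_N))}]<1$. The additive constant $b$ then depends on $N$ through $M^N$, which is allowed.

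This is exactly the paper's order: $\delta$ is chosen by \eqref{eq:choose-delta}, then $N$ by \eqref{eq:c0-less-than-one}. The $N$-independence there comes from submultiplicativity of $L$, which gives $M_N\le(\int L^\delta\,d\mu)^N$.
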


\begin{remark}[Comparison with GLM]
Gelander--Levit--Margulis prove the corresponding one-step inequality for the
special measure
\[
  \mu_s=\eta_K*\delta_s*\eta_K,
\]
where \(s\in G\) is a sufficiently expanding regular semisimple element; see
\cite[Theorem 1.5]{GLM}.  Their compact support gives a deterministic lower
bound on how much conjugation by an element of \(KsK\) can shrink the
Zassenhaus radius.  In the proof below this deterministic bound is replaced by
an integrable tail estimate, which follows from the exponential-moment
assumption.  Their deterministic expansion of the fixed element \(s\) is
replaced by linear Cartan drift of a high convolution power \(\mu^{*N}\).
\end{remark}

\subsection{The angular estimate in negative-moment form}

We first turn Proposition \ref{prop:GLM-inputs}\ref{item:GLM-angular}--\ref{item:GLM-linear}
into the negative-moment estimate used in the drift argument.

\begin{lemma}[Angular negative moment]\label{lem:angular-negative-moment}
For every \(0<\delta<\theta\) there is a constant
\(C_\delta<\infty\) such that, for every \(\mathfrak n\in\mathcal N\) and every
\(H\in\overline{\aCartan^+}\),
\begin{equation}\label{eq:angular-negative-moment}
  \int_K m_{\mathfrak n}(a_H k)^{-\delta}\,d\eta_K(k)
  \le C_\delta e^{-\delta r(H)}.
\end{equation}
\end{lemma}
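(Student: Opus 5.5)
The plan is to combine the linear expansion estimate (GLM3) with the angular sublevel estimate (GLM2) through a layer-cake computation. Fix $\mathfrak n\in\mathcal N$ and $H\in\overline{\aCartan^+}$, and write
\[
  s(k):=\inf_{0\ne Y\in\Ad(k)\mathfrak n}\frac{\|PY\|}{\|Y\|}\in[0,1].
\]
By \eqref{eq:GLM-linear-expansion},
\[
  m_{\mathfrak n}(a_Hk)^{-\delta}\le C_{\mathrm{lin}}^{\delta}e^{-\delta r(H)}s(k)^{-\delta}
\]
pointwise in $k$, with the convention that $s(k)^{-\delta}=\infty$ when $s(k)=0$. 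Integrating over $K$ then reduces the lemma to a bound on $\int_K s(k)^{-\delta}\,d\eta_K(k)$ that is uniform in $\mathfrak n$. The factor $e^{-\delta r(H)}$ comes out exactly, so there is no dependence on $H$ to track.

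To bound the negative moment, use the distribution function:
\[
  \int_K s^{-\delta}\,d\eta_K=\int_0^\infty \eta_K\{s^{-\delta}>t\}\,dt\le 1+\int_1^\infty \eta_K\{s< t^{-1/\delta}\}\,dt .
\]
Apply \eqref{eq:GLM-angular-sublevel} with $\varepsilon=t^{-1/\delta}<1$, which gives $\eta_K\{s\le t^{-1/\delta}\}\le C_{\mathrm{ang}}t^{-\theta/\delta}$. Since $\delta<\theta$, the exponent satisfies $\theta/\delta>1$, so the integral converges. This yields
\[
  \int_K s^{-\delta}\,d\eta_K\le 1+C_{\mathrm{ang}}\frac{\delta}{\theta-\delta}.
\]
The same sublevel bound with $\varepsilon\to0$ shows that $\{s=0\}$ is $\eta_K$-null, so the convention $s^{-\delta}=\infty$ there is harmless. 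Setting $C_\delta:=C_{\mathrm{lin}}^{\delta}\bigl(1+C_{\mathrm{ang}}\delta/(\theta-\delta)\bigr)$ finishes the argument.

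There is essentially no obstacle beyond bookkeeping. The only points to check are that the bound is uniform over $\mathfrak n\in\mathcal N$, which holds because the constants in (GLM2) and (GLM3) are uniform by Proposition~\ref{prop:GLM-inputs}, and that the measurability of $k\mapsto s(k)$ needed for the layer-cake formula holds. For the latter, $s$ is continuous on $K$: it is the smallest singular value of $P$ restricted to a subspace that varies continuously in the Grassmannian. The condition $\delta<\theta$ is used exactly once, to make the tail integral $\int_1^\infty t^{-\theta/\delta}\,dt$ finite.
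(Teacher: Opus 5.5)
Your proposal is correct and is essentially the paper's proof. The paper also uses (GLM3) to extract the factor $C_{\mathrm{lin}}^{\delta}e^{-\delta r(H)}$, then bounds the remaining negative moment uniformly in $\mathfrak n$ by the layer-cake formula combined with (GLM2), with $\theta/\delta>1$ giving convergence; the only cosmetic difference is that it applies the layer-cake formula to the normalized quantity $Z(k)=C_{\mathrm{lin}}e^{-r(H)}m_{\mathfrak n}(a_Hk)\geq s(k)$ rather than to $s(k)$ itself, and it leaves $C_\delta$ implicit.
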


\begin{proof}
By \eqref{eq:GLM-linear-expansion},
\[
  m_{\mathfrak n}(a_Hk)
  \ge C_{\mathrm{lin}}^{-1} e^{r(H)} A_{\mathfrak n}(k),
\]
where
\[
  A_{\mathfrak n}(k)=
  \inf_{0\ne Y\in\Ad(k)\mathfrak n}\frac{\|PY\|}{\|Y\|}.
\]
The angular sublevel estimate \eqref{eq:GLM-angular-sublevel} says that
\[
  \eta_K\{k:A_{\mathfrak n}(k)\le\varepsilon\}
  \le C_{\mathrm{ang}}\varepsilon^\theta
  \qquad(0<\varepsilon<1),
\]
uniformly in \(\mathfrak n\in\mathcal N\).  Therefore, setting
\[
  Z(k)=C_{\mathrm{lin}}e^{-r(H)}m_{\mathfrak n}(a_Hk),
\]
we have \(\eta_K\{Z\le\varepsilon\}\le C_{\mathrm{ang}}\varepsilon^\theta\) for
\(0<\varepsilon<1\).  If \(0<\delta<\theta\), then the layer-cake formula gives
\[
\begin{aligned}
  \int_K Z(k)^{-\delta}\,d\eta_K(k)
  &=\int_0^\infty \eta_K\{Z^{-\delta}>t\}\,dt  \\
  &\le 1+C_{\mathrm{ang}}\int_1^\infty t^{-\theta/\delta}\,dt
  <\infty .
\end{aligned}
\]
This bound is uniform in \(H\) and \(\mathfrak n\).  Multiplying back by
\(C_{\mathrm{lin}}^\delta e^{-\delta r(H)}\) proves
\eqref{eq:angular-negative-moment}.
\end{proof}

\subsection{Bi-\texorpdfstring{\(K\)}{K}-invariant averaging and why the GLM Haar estimate still applies}

Let \(\tau\) be any bi-\(K\)-invariant probability measure on \(G\), not
necessarily compactly supported.  Let \(\overline\tau=\kappa_*\tau\) be its
radial law on \(\overline{\aCartan^+}\).  The Cartan decomposition and
bi-\(K\)-invariance imply the disintegration formula
\begin{equation}\label{eq:KAK-disintegration-integral}
  \int_G F(g)\,d\tau(g)
  =
  \int_{\overline{\aCartan^+}}
  \int_K\int_K F(k_1a_Hk_2)\,d\eta_K(k_1)d\eta_K(k_2)d\overline\tau(H)
\end{equation}
for every nonnegative Borel function \(F\) and every bounded Borel function
\(F\).  Equivalently,
\begin{equation}\label{eq:KAK-disintegration}
  \tau=
  \int_{\overline{\aCartan^+}}
       \eta_K*\delta_{a_H}*\eta_K\,d\overline\tau(H).
\end{equation}
No compact-support assumption is involved in \eqref{eq:KAK-disintegration}; it
is simply the disintegration over the double-coset space \(K\backslash G/K\),
identified with \(\overline{\aCartan^+}\) by the Cartan projection.

Since the norm is \(\Ad(K)\)-invariant,
\[
  m_{\mathfrak n}(k_1a_Hk_2)=m_{\mathfrak n}(a_Hk_2)
  =m_{\Ad(k_2)\mathfrak n}(a_H).
\]
As \(\mathcal N\) is \(\Ad(K)\)-invariant, Lemma
\ref{lem:angular-negative-moment} and \eqref{eq:KAK-disintegration-integral}
give, for every \(0<\delta<\theta\),
\begin{equation}\label{eq:any-biK-average}
  \sup_{\mathfrak n\in\mathcal N}
  \int_G m_{\mathfrak n}(g)^{-\delta}\,d\tau(g)
  \le C_\delta
      \int_{\overline{\aCartan^+}}e^{-\delta r(H)}\,d\overline\tau(H).
\end{equation}
Thus the GLM Haar estimate applies to any bi-\(K\)-invariant measure because,
conditional on a fixed double coset \(K a_H K\), the right angular variable is
Haar-distributed with law \(\eta_K\).  The radial parameter \(H\) may be
unbounded; the estimate is pointwise and uniform in \(H\), and the unboundedness
is reflected only in the remaining radial integral on the right side of
\eqref{eq:any-biK-average}.

We now apply this to \(\tau=\mu^{*n}\).  Since \(\mu\) is bi-\(K\)-invariant,
every convolution power \(\mu^{*n}\) is bi-\(K\)-invariant.  Let
\[
  Y_n=g_n\cdots g_1,
\]
where \(g_1,g_2,\ldots\) are independent \(G\)-valued random variables with law
\(\mu\).  Then the radial law \(\overline{\mu^{*n}}\) is the law of
\(\kappa(Y_n)\), and \eqref{eq:any-biK-average} yields
\begin{equation}\label{eq:mn-bound-random-walk}
  \sup_{\mathfrak n\in\mathcal N}
  \int_G m_{\mathfrak n}(g)^{-\delta}\,d\mu^{*n}(g)
  \le C_\delta
       \E\left[e^{-\delta r(\kappa(Y_n))}\right].
\end{equation}

The exponential-moment assumption \eqref{eq:exp-moment} implies finite first moment, and Zariski density implies that the Lyapunov vector lies in the open Weyl chamber.

By the law of large numbers for the Cartan projection (for finite first moment random walks), and positivity
of the Lyapunov vector under Zariski density, see Benoist--Quint
\cite[Theorem 9.9 ]{BQ}, we obtain: 
\begin{equation}\label{eq:BQ-LLN}
  \frac{1}{n}\kappa(Y_n)\longrightarrow \sigma_\mu
\end{equation}
almost surely and in \(L^1\), where \(\sigma_\mu\in\aCartan^{++}\) lies in the
open Weyl chamber.  Hence
\[
  \frac{1}{n}r(\kappa(Y_n))\longrightarrow r(\sigma_\mu)>0
\]
almost surely.  Since \(0\le e^{-\delta r(\kappa(Y_n))}\le1\), dominated
convergence gives
\begin{equation}\label{eq:expectation-to-zero}
  \E\left[e^{-\delta r(\kappa(Y_n))}\right]\longrightarrow0.
\end{equation}
Combining this with \eqref{eq:mn-bound-random-walk}, we obtain the following
conclusion: for every fixed \(0<\delta<\theta\),
\begin{equation}\label{eq:negative-moment-limit}
  \sup_{\mathfrak n\in\mathcal N}
  \int_G m_{\mathfrak n}(g)^{-\delta}\,d\mu^{*n}(g)
  \longrightarrow 0
  \qquad(n\to\infty).
\end{equation}
The final value of \(\delta\) will be chosen in the next subsection so that it
also satisfies the tail-integrability requirement.

\subsection{The exponential-moment replacement for compact support}

Define
\begin{equation}\label{eq:def-L}
  L(g)=\max\{1,\|\Ad(g^{-1})\|\},
\end{equation}
where the operator norm is taken with respect to the fixed Euclidean norm on
\(\g\).  Since \(\Ad\) is a finite-dimensional representation and
\eqref{eq:distance-kappa-comparable} holds, there is a constant \(B>0\) such
that
\begin{equation}\label{eq:L-kappa-bound}
  \log L(g)\le B(1+\dX(o,go))
\end{equation}
for all \(g\in G\).  Therefore the exponential-moment assumption
\eqref{eq:exp-moment} implies that
\begin{equation}\label{eq:tail-moment-one-step}
  \int_G L(g)^\delta\,d\mu(g)<\infty
\end{equation}
for every sufficiently small \(\delta>0\).  Equivalently, this is the standard
comparison between exponential moments of the Cartan projection and exponential
moments of operator norms in finite-dimensional representations; see
\cite[Chapter 9]{BQ}.

We now choose \(\delta>0\) satisfying both conditions
\begin{equation}\label{eq:choose-delta}
  0<\delta<\theta
  \qquad\text{and}\qquad
  \int_G L(g)^\delta\,d\mu(g)<\infty.
\end{equation}
With this \(\delta\) fixed, choose \(N\) so large that
\begin{equation}\label{eq:c0-less-than-one}
  c_0:=
  \sup_{\mathfrak n\in\mathcal N}
  \int_G m_{\mathfrak n}(g)^{-\delta}\,d\mu^{*N}(g)<1,
\end{equation}
which is possible by \eqref{eq:negative-moment-limit}.  Since
\[
  L(gh)\le L(g)L(h),
\]
we also have
\begin{equation}\label{eq:tail-moment-N}
  M_N:=\int_G L(g)^\delta\,d\mu^{*N}(g)<\infty.
\end{equation}
Indeed, \(M_N\le (\int L(g)^\delta\,d\mu(g))^N\).

This is the only point at which exponential moment is used to replace compact
support.  In the compactly supported GLM measure \(\eta_K*\delta_s*\eta_K\), the
quantity \(L(g)\) is uniformly bounded on the support.  For a noncompactly
supported measure, the uniform bound is false, and \eqref{eq:tail-moment-N} is
the required substitute.

\subsection{Pointwise estimates for the discreteness radius}

The next two lemmas replace the pointwise compact-support estimates used in the
proof of \cite[Theorem 1.5]{GLM}.  

\begin{lemma}[Thin-part estimate]\label{lem:thin}
Assume \(\Ii_G(\Gamma)<\rho\), and let \(\mathfrak n(\Gamma)\in\mathcal N\) be as above.  Then, for every \(g\in G\),
\begin{equation}\label{eq:thin-lower}
  \Ii_G(\Gamma^g)
  \ge
  \min\left\{\rho,
       m_{\mathfrak n(\Gamma)}(g)\Ii_G(\Gamma),
       \frac{R}{L(g)}\right\}.
\end{equation}
Consequently,
\begin{equation}\label{eq:thin-upper-negative}
  \Ii_G(\Gamma^g)^{-\delta}
  \le \rho^{-\delta}
     +m_{\mathfrak n(\Gamma)}(g)^{-\delta}\Ii_G(\Gamma)^{-\delta}
     +R^{-\delta}L(g)^\delta.
\end{equation}
\end{lemma}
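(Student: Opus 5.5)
We take a nontrivial element $\gamma'\in\Gamma^g$ of minimal $\|\log\gamma'\|$ inside $V_0$, write it as $\gamma'=g\gamma g^{-1}$ with $\gamma\in\Gamma$, and split into two cases according to whether $\gamma$ lies in the Zassenhaus neighbourhood $V$. If $\Gamma^g\cap V_0=\{e\}$, then $\Ii_G(\Gamma^g)=\rho$ and \eqref{eq:thin-lower} is immediate, so we assume $\gamma'\neq e$ exists. Set $Y'=\log\gamma'\in B_\g(\rho)$, so that $\Ii_G(\Gamma^g)=\|Y'\|$. The goal is to show that either $\|Y'\|\ge R/L(g)$, or $\|Y'\|\ge m_{\mathfrak n(\Gamma)}(g)\Ii_G(\Gamma)$.

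\emph{Case 1: $\|\Ad(g^{-1})Y'\|\ge R$.} Then $R\le \|\Ad(g^{-1})\|\,\|Y'\|\le L(g)\|Y'\|$, which gives $\|Y'\|\ge R/L(g)$.

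\emph{Case 2: $\|\Ad(g^{-1})Y'\|<R$.} Put $Y=\Ad(g^{-1})Y'$. Then $\gamma=\exp(Y)=g^{-1}\gamma' g\in\Gamma\cap V$, and $\gamma\ne e$ because $\exp$ is injective on $B_\g(R)$. By \ref{item:GLM-zassenhaus}, $\gamma$ lies in the connected nilpotent group $N(\Gamma)$. Since $\exp$ is injective on $B_\g(R)$ and $R$ is a Zassenhaus radius, $\log\gamma=Y$ should lie in $\mathfrak n(\Gamma)$; this needs care, and we would justify it by choosing $R$ so small that $\exp^{-1}(N(\Gamma))\cap B_\g(R)\subseteq\mathfrak n(\Gamma)$, which is the formulation in \cite[Lemma 7.1]{GLM}. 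Granting this, $\|Y'\|=\|\Ad(g)Y\|\ge m_{\mathfrak n(\Gamma)}(g)\|Y\|$. It remains to bound $\|Y\|\ge\Ii_G(\Gamma)$. If $\|Y\|<\rho$, then $\gamma\in(\Gamma\cap V_0)\setminus\{e\}$, so $\|Y\|\ge\Ii_G(\Gamma)$ by definition. If $\|Y\|\ge\rho$, then $\|Y\|>\Ii_G(\Gamma)$ because we assumed $\Ii_G(\Gamma)<\rho$. In both sub-cases $\|Y'\|\ge m_{\mathfrak n(\Gamma)}(g)\Ii_G(\Gamma)$.

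Combining the two cases yields \eqref{eq:thin-lower}. The negative-moment form \eqref{eq:thin-upper-negative} then follows by raising to the power $-\delta$: for positive $a,b,c$ we have $\min\{a,b,c\}^{-\delta}=\max\{a^{-\delta},b^{-\delta},c^{-\delta}\}\le a^{-\delta}+b^{-\delta}+c^{-\delta}$, applied with $a=\rho$, $b=m_{\mathfrak n(\Gamma)}(g)\Ii_G(\Gamma)$, $c=R/L(g)$.

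The main obstacle is the step asserting $\log\gamma\in\mathfrak n(\Gamma)$, that is, the compatibility of the logarithm chart with the enclosing nilpotent group. I would settle it by shrinking $R$ so that $\exp$ is a diffeomorphism from $B_\g(R)$ onto its image, and by using that for a connected nilpotent group every element near $e$ is $\exp$ of a small element of its Lie algebra. Everything else in the argument is bookkeeping.
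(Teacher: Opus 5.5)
Your proof is correct and is essentially the paper's argument: the paper argues by contradiction for each $r<\min\{\rho,m_{\mathfrak n(\Gamma)}(g)\Ii_G(\Gamma),R/L(g)\}$, whereas you run it directly on a shortest nontrivial element of $\Gamma^g\cap V_0$, with the same ingredients ($\|\Ad(g^{-1})\|\le L(g)$ to land in $V$, $\log\gamma\in\mathfrak n(\Gamma)$ from (GLM1), and the definition of $\Ii_G(\Gamma)$). The step you flag is also simply read off from (GLM1) in the paper; it is better justified by taking $\mathfrak n(\Gamma)$ to be the Lie algebra generated by $\log(\Gamma\cap V)$ (the Lie-algebra form of the Zassenhaus lemma) than by shrinking $R$, because a non-closed connected nilpotent subgroup, such as an irrational line in a torus, contains elements $\exp X$ with $X$ arbitrarily small but not in its Lie algebra.
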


\begin{proof}
Write \(I=I_G(\Gamma)\), \(m=m_{\mathfrak n(\Gamma)}(g)\), and \(L=L(g)\).  Let
\[
  0<r<\min\left\{\rho,mI,\frac{R}{L}\right\}.
\]
Suppose, for contradiction, that \(\Gamma^g\cap\exp B_\g(r)\) contains a
nontrivial element.  Then there is \(\gamma\in\Gamma\setminus\{e\}\) such that
\(g\gamma g^{-1}=\exp X\) with \(\|X\|<r\).  Since
\[
  \gamma=\exp(\Ad(g^{-1})X)
\]
and \(\|\Ad(g^{-1})X\|\le L\|X\|<R\), we have \(\gamma\in\Gamma\cap V\).  By
(GLM1), we have \(\log\gamma\in\mathfrak n(\Gamma)\).  Also
\[
  X=\log(g\gamma g^{-1})=\Ad(g)\log\gamma.
\]
Therefore \(\|X\|\ge m\|\log\gamma\|\).  Since \(\|X\|<mI\), it follows that
\(\|\log\gamma\|<I\).  As \(I<\rho\), this places \(\gamma\) in
\(\Gamma\cap V_0\), contradicting the definition of \(I=I_G(\Gamma)\).  Hence
\(\Gamma^g\cap\exp B_\g(r)=\{e\}\) for every such \(r\), proving
\eqref{eq:thin-lower}.  The estimate \eqref{eq:thin-upper-negative} follows by
taking negative powers and using
\((\min\{A,B,C\})^{-\delta}=\max\{A^{-\delta},B^{-\delta},C^{-\delta}\}
\le A^{-\delta}+B^{-\delta}+C^{-\delta}\).
\end{proof}

\begin{lemma}[Thick-part estimate]\label{lem:thick}
If \(I_G(\Gamma)\ge \rho/2\), then for every \(g\in G\),
\begin{equation}\label{eq:thick-upper-negative}
  \Ii_G(\Gamma^g)^{-\delta}
  \le \rho^{-\delta}+(2/\rho)^\delta L(g)^\delta.
\end{equation}
\end{lemma}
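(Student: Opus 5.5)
The plan is to mimic the proof of the thin-part estimate (Lemma~\ref{lem:thin}), but without the nilpotent enclosure. The only input will be the size of $\Ad(g^{-1})$, i.e.\ $L(g)$. Write $I=\Ii_G(\Gamma)\geq\rho/2$ and $L=L(g)$. We aim for the pointwise lower bound
\[
\Ii_G(\Gamma^g)\;\geq\;\min\Big\{\rho,\ \frac{\rho}{2L}\Big\},
\]
from which \eqref{eq:thick-upper-negative} follows by taking $(-\delta)$-th powers. As in Lemma~\ref{lem:thin}, we use $(\min\{A,B\})^{-\delta}\leq A^{-\delta}+B^{-\delta}$, and note that $(\rho/(2L))^{-\delta}=(2/\rho)^\delta L^\delta$.

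To prove the lower bound, fix $0<r<\min\{\rho,\rho/(2L)\}$. Suppose for contradiction that some $\gamma\in\Gamma\setminus\{e\}$ satisfies $g\gamma g^{-1}=\exp X$ with $\|X\|<r$. Then $\gamma=\exp(\Ad(g^{-1})X)$ and
\[
\|\Ad(g^{-1})X\|\leq L\|X\|<\rho/2<\rho<R .
\]
So $\gamma$ is the exponential of a vector of norm $<\rho/2$, and hence $\gamma\in\Gamma\cap V_0$. Since $\exp$ is injective on $B_\g(R)$, we have $\log\gamma=\Ad(g^{-1})X$, so $\|\log\gamma\|<\rho/2$.

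There are now two cases. If $\Gamma\cap V_0\neq\{e\}$, the definition \eqref{eq:def-IG} gives $\|\log\gamma\|\geq I\geq\rho/2$, a contradiction. If $\Gamma\cap V_0=\{e\}$, then $\gamma\in\Gamma\cap V_0$ already contradicts $\gamma\neq e$. In both cases $\Gamma^g\cap\exp B_\g(r)=\{e\}$ for every such $r$. By the equivalent supremum description of $\Ii_G$ (capped at $\rho$), this yields the claimed lower bound.

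The argument is routine. The only point needing care is the identification $\log\gamma=\Ad(g^{-1})X$, which holds because both vectors lie in $B_\g(R)$, where $\exp$ is injective, together with the convention that $\Ii_G$ is capped at $\rho$. No probabilistic input is needed; the $L(g)^\delta$ term is exactly what will later be integrated against $\mu^{*N}$ using \eqref{eq:tail-moment-N}.
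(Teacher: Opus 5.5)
Your proof is correct and follows essentially the same argument as the paper. The paper first proves the general bound $\Ii_G(\Gamma^g)\ge\min\{\rho,\Ii_G(\Gamma)/L(g)\}$ via the same identity $\log\gamma=\Ad(g^{-1})X$ and then substitutes $\Ii_G(\Gamma)\ge\rho/2$, whereas you use $\rho/2$ from the start; your explicit case split on whether $\Gamma\cap V_0$ is trivial just spells out a detail the paper leaves implicit.
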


\begin{proof}
For arbitrary \(\Gamma\) and \(g\), the same logarithm-equivariance argument as
above gives
\begin{equation}\label{eq:crude-lower}
  \Ii_G(\Gamma^g)
  \ge \min\left\{\rho,\frac{I_G(\Gamma)}{L(g)}\right\}.
\end{equation}
Indeed, if \(0<r<\min\{\rho,I_G(\Gamma)/L(g)\}\) and
\(g\gamma g^{-1}=\exp X\) with \(\|X\|<r\), then
\[
  \|\log\gamma\|=\|\Ad(g^{-1})X\|<I_G(\Gamma),
\]
contradicting the definition of \(\Ii_G(\Gamma)\).  If
\(\Ii_G(\Gamma)\ge\rho/2\), then \eqref{eq:crude-lower} implies
\[
  \Ii_G(\Gamma^g)^{-\delta}
  \le \rho^{-\delta}+\Ii_G(\Gamma)^{-\delta}L(g)^\delta
  \le \rho^{-\delta}+(2/\rho)^\delta L(g)^\delta.
\]
\end{proof}

\subsection{Proof of Theorem \ref{thm:main}}

Let \(\tau=\mu^{*N}\), where \(\delta\) and \(N\) are chosen as in
\eqref{eq:choose-delta} and \eqref{eq:c0-less-than-one}.  Recall that
\[
  M_N=\int_G L(g)^\delta\,d\tau(g)<\infty.
\]

First suppose \(\Ii_G(\Gamma)<\rho\).  Integrating
\eqref{eq:thin-upper-negative} against \(\tau\), and using
\eqref{eq:c0-less-than-one}, gives
\begin{align}
  \int_G \Ii_G(\Gamma^g)^{-\delta}\,d\tau(g)
  &\le
  \left(\int_G m_{\mathfrak n(\Gamma)}(g)^{-\delta}\,d\tau(g)\right)
  \Ii_G(\Gamma)^{-\delta}
  +\rho^{-\delta}+R^{-\delta}M_N \notag\\
  &\le c_0 \Ii_G(\Gamma)^{-\delta}
  +\rho^{-\delta}+R^{-\delta}M_N.
  \label{eq:thin-integrated}
\end{align}

Now suppose \(\Ii_G(\Gamma)\ge\rho/2\).  Integrating
\eqref{eq:thick-upper-negative} gives
\begin{equation}\label{eq:thick-integrated}
  \int_G \Ii_G(\Gamma^g)^{-\delta}\,d\tau(g)
  \le \rho^{-\delta}+(2/\rho)^\delta M_N.
\end{equation}
Since \(c_0\Ii_G(\Gamma)^{-\delta}\ge0\), this also gives
\[
  \int_G \Ii_G(\Gamma^g)^{-\delta}\,d\tau(g)
  \le c_0\Ii_G(\Gamma)^{-\delta}
     +\rho^{-\delta}+(2/\rho)^\delta M_N.
\]

The two alternatives \(\Ii_G(\Gamma)<\rho\) and \(\Ii_G(\Gamma)\ge\rho/2\) cover all
possibilities because \(0<\Ii_G(\Gamma)\le\rho\).  Taking
\[
  c=c_0<1,
  \qquad
  b=\rho^{-\delta}+R^{-\delta}M_N+(2/\rho)^\delta M_N
\]
proves \eqref{eq:main-drift} for every discrete subgroup \(\Gamma<G\).  This
completes the proof of Theorem \ref{thm:main}. \qed

\begin{remark}[Where each hypothesis is used]
The proof separates the three assumptions as follows.
\begin{itemize}[leftmargin=2em]
\item Bi-\(K\)-invariance gives the disintegration
\(\tau=\int\eta_K*\delta_{a_H}*\eta_K\,d\overline\tau(H)\).  This is what makes
the right angular variable Haar-distributed and allows the GLM estimate
\eqref{eq:GLM-angular-sublevel} to be used for arbitrary bi-\(K\)-invariant
measures.
\item Zariski density gives an interior Lyapunov vector
\(\sigma_\mu\in\aCartan^{++}\), hence \(r(\kappa(Y_n))\to\infty\) linearly along
the random walk.
\item Exponential moment gives \(\int L(g)^\delta\,d\mu^{*N}(g)<\infty\), which
replaces the uniform support bound available for the compactly supported GLM
measure \(\eta_K*\delta_s*\eta_K\).
\end{itemize}
\end{remark}

\bibliographystyle{plain}
\bibliography{BibErg}{}

\end{document}